\newif\ifuseamsart
\newif\ifdraft
	\def\keywords{\vspace{-.5em}
		{\textit{Keywords}:\,\relax%
	}}
	\def\subjclass{\vspace{.5em}
		{\textit{2020 Mathematics Subject Classification.}:\,\relax%
	}}
	\newcommand\blfootnote[1]{%
		\begingroup
		\renewcommand\thefootnote{}\footnote{#1}%
		\addtocounter{footnote}{-1}%
		\endgroup
	}
\newcommand{\red}{\xspace\mbox{\textcolor{red}{$\pmb\rightarrow$}}\xspace}%sharp pair                NEW VERSION
\newcommand{\blue}{\xspace\mbox{\textcolor{cyan}{$\pmb\rightarrow$}}\xspace}%cover pair             NEW VERSION
\newcommand{\black}{\xspace\mbox{\textcolor{black}{$\pmb\rightarrow$}}\xspace}%cover pair             NEW VERSION
\newcommand\notsotiny{\@setfontsize\notsotiny{8}{8}}
\newcommand\Tstrut{\rule{0pt}{2.6ex}}         % = `top' strut
\newcommand\Bstrut{\rule[-0.9ex]{0pt}{0pt}}   % = `bottom' strut
 \newcommand{\LocationAndLength}{\mbox{$M_P$}\xspace}
 \newcommand{\FaceGraph}{key graph\xspace}
 \newcommand{\Cover}{\mbox{\textcolor{cyan}{$\pmb\rightarrow$}}}
 \newcommand{\Sharp}{\mbox{\textcolor{red}{$\pmb\rightarrow$}}}
\newtheorem{theorem}{Theorem}[section]
\newtheorem{lemma}[theorem]{Lemma}
\newtheorem{proposition}[theorem]{Proposition}
\newtheorem{corollary}[theorem]{Corollary}
\tikzstyle{startstop} = [rectangle, rounded corners, minimum width=3cm, minimum height=1cm,text centered, draw=black, fill=red!30]
\tikzstyle{process} = [rectangle, minimum width=3cm, minimum height=1cm, text centered, draw=black, fill=orange!30]
\tikzstyle{decision} = [diamond, minimum width=3cm, minimum height=1cm, text centered, draw=black, fill=yellow!30]
\tikzstyle{arrow} = [thick,->,>=stealth]
\title{The Schrijver system of the length polyhedron of an interval order}
	\author{Andr\'{e} E. K\'{e}zdy}
	\address{Department of Mathematics, University of Louisville, Louisville, Kentucky, U.S.A.}
	\email{kezdy@louisville.edu}
	\author{Jen\H{o} Lehel}
	\address{Alfr\'ed R\'enyi Institute of Mathematics, Budapest, Hungary} 
	\email{lehelj@renyi.hu}
	\author{Andr\'{e} E. K\'{e}zdy \\
		\small Department of Mathematics\\[-0.8ex]
		\small University of Louisville\\[-0.8ex]
		\small Louisville, Kentucky, U.S.A.\\[-0.8ex]
		\small\tt kezdy@louisville.edu\\
		\\
		Jen\H{o} Lehel \\
%		\small Department of Mathematics\\[-0.8ex]
%		\small University of Louisville\\[-0.8ex]
%		\small Louisville, Kentucky, U.S.A.\\
%		\small{and}\\          
		\small  Alfr\'ed R\'enyi Institute of Mathematics\\[-0.8ex] 
		\small  Budapest, Hungary\\[-0.8ex]
		\small\tt lehelj@renyi.hu}  
\begin{document}
%%%%%%%%%%%%%%%%%%

 \tikzstyle{A} = [circle,fill=black!,minimum width=.1em, inner sep=.1em]
 \tikzstyle{B} = [circle,fill=cyan,minimum width=.15em, inner sep=.2em]
 \tikzstyle{W} = [circle,fill=red!,minimum width=.15em, inner sep=.2em]
 \tikzstyle{Yr} = [rectangle,fill=red!,minimum width=.4em, inner sep=.4em] 
 \tikzstyle{Yb} = [rectangle,fill=cyan!,minimum width=.4em, inner sep=.4em] 
 \tikzstyle{X} = [circle,draw=black!,minimum width=2pt, inner sep=2pt]
 
% make title here for normal use
\ifuseamsart
\else
	\maketitle
\fi

\begin{abstract}
	The length polyhedron of an interval order $P$
	is the convex hull of integer vectors representing the interval lengths in possible interval representations of $P$ in which all intervals have integer endpoints.
	This polyhedron is an integral translation of a polyhedral cone, with its apex corresponding to the canonical interval representation of $P$
	(also known as the minimal endpoint representation).
	
	In earlier work, we introduced an arc-weighted directed graph model, termed the \FaceGraph, inspired by this canonical representation. We showed that cycles in the \FaceGraph correspond, via Fourier-Motzkin elimination, to inequalities that describe supporting hyperplanes of the length polyhedron. 
	These cycle inequalities derived from the \FaceGraph form a complete system of linear inequalities defining the length polyhedron.
	By applying a theorem due to Cook, we establish here that this system of inequalities is totally dual integral (TDI).
	
	Leveraging circulations, total dual integrality, and the special structure of the \FaceGraph, our main theorem demonstrates that a cycle inequality is a positive linear combination of other cycle inequalities if and only if it is a positive integral linear combination of smaller cycle inequalities (where `smaller' here refers a natural weak ordering among these cycle inequalities).
	This yields an efficient method to remove redundant cycle inequalities and ultimately construct the unique minimal TDI-system, also known as the Schrijver system, for the length polyhedron. 
	Notably, if the \FaceGraph contains a polynomial number of cycles, this gives a polynomial-time algorithm to compute the Schrijver system for the length polyhedron.
	
	Lastly, we provide examples of interval orders where the Schrijver system has an exponential size.
	\ifuseamsart
		% \qquad \textcolor{cyan}{/\today/}
	\fi

\end{abstract}
\ifuseamsart
	\keywords{interval order, canonical representation, key graph,  Fourier-Motzkin elimination, cycle inequalities, length polyhedron, convex cone, totally unimodular  matrix, total dual integrality, graph circulation, Schrijver system}
	\subjclass[2020]{06A06, 90C27, 05C62, 52B05, 05C20}
	% make title here for AMSART use (to get keywords and subject class as footnotes)
	\maketitle
\else
	
    \blfootnote{
    	%\noindent 
   
    \keywords{interval order, canonical representation, key graph,  Fourier-Motzkin elimination, cycle inequalities, length polyhedron, convex cone, 
    	totally unimodular  matrix, total dual integrality, graph circulation, Schrijver system}
    
    % \noindent
    \subjclass{06A06, 90C27, 05C62, 52B05, 05C20}}
\fi

\section{Introduction} 
In this paper, we continue our investigation of the length polyhedron of an interval order, building on earlier work \cite{LengthPolyhedronPaper}. A partially ordered set $P=([n],\prec)$ is called an interval order if there exists an assignment of compact intervals from $\mathbb{R}$ to elements of $[n]$, where each $x \in [n]$ is assigned an interval $I_x = [\ell_x, r_x] \subset \mathbb{R}$ such that $x \prec y$ if and only if $r_x < \ell_y$. 
For technical convenience and without loss of generality, we assume that $r_x + 1 \leq \ell_y$, for all comparable pairs $x \prec y$.
This choice reflects our focus on finite interval orders and their interval representations having integer interval endpoints.

The length polyhedron has been the subject of much research.  For example, it appears 
in Fishburn's book \cite{FishburnBook} where he
utilizes linear algebra and polyhedral techniques to explore properties of interval orders.  His focus
is primarily on the {\em interval count problem}.  This problem seeks to determine
the smallest number $k$ of distinct interval lengths required to represent a given interval order. 
Interval orders that can be represented with a single interval length ($k=1$)
are precisely the semiorders, which are well-studied and have well-known characterizations (see \cite{ScottSuppes,TrotterBook}). 

For $k\geq 2$, however, the interval count problem remains largely unresolved, with no general results available, even in the simplest case $k=2$. 
Isaak \cite{Isaak1993,Isaak2009} proposes a directed graph model to address this, though it is different from our model.
Progress also has been made by Doignon and Pauwels \cite{DoignonPauwels}, who additionally 
contribute applications to knowledge space theory \cite{DoFa} 
and mathematical psychology \cite{DoDuFa}.

Non-negative interval representations of an interval order $P$ with $n$ elements are characterized by the following system of linear inequalities.
 For every $x,y\in [n]$,
 \begin{eqnarray}\label{LOC}
\left\{
\begin{array}{rcll}
-\ell_x&\leq& 0,\\
r_x-\ell_y&\leq& -1\  & \text{\ if $x\prec y$},\ \\
\ell_x-r_y\  &\leq& 0& \text{\ if $x\not\prec y$}.
\end{array}
\right.
\end{eqnarray}
Each non-negative interval representation of $P$ corresponds to a vector in $\mathbb{R}^{2n}$ encoding a feasible solution of this linear system.  These feasible vectors form a convex polyhedron, which we denote $D_P$. Doignon and Pauwels \cite{DoignonPauwels} studied this polyhedron, determining its facet-defining inequalities and proving that it is
a pointed affine cone.

Interval count problems motivate our
study of a related polyhedron, {\em the length polyhedron} of an interval order $P$, denoted $Q_P$.  This polyhedron is
the convex hull of integer vectors representing the interval lengths in possible 
interval representations of $P$ in which all intervals have integer endpoints.
The components of the integer vectors of $Q_P$ correspond to interval lengths; we refer to these 
as $\rho_x = r_x - \ell_x$, for each $x \in [n]$.  In \cite{LengthPolyhedronPaper} we demonstrate
that $Q_P$ is a pointed affine cone and we compute its Hilbert basis.
In that investigation we introduce a novel directed graph, the \FaceGraph of $P$, 
which efficiently models the results of the Fourier-Motzkin process used
to derive $Q_P$ from (\ref{LOC}).  The length polyhedron is defined by the linear system of cycle inequalities associated with the directed cycles of the \FaceGraph (Theorem \ref{QPdef}). 
We prove here
(Proposition \ref{CYCTDI}), applying a theorem due to Cook (Theorem \ref{CookTDI}), that these cycle inequalities yield a totally dual integral (TDI) system of inequalities.  

Section \ref{PrelimSection} introduces these two central structures of an interval order that 
form the foundation of our analysis: the length polyhedron and the \FaceGraph. 
The length polyhedron, $Q_P$, provides a geometric framework to study the lengths of intervals in representations of an interval order $P$.  
In parallel, the \FaceGraph, $G_P$, serves as a combinatorial model for the inequalities defining the length polyhedron. 
This arc-colored, arc-weighted directed graph establishes a conceptual bridge between the length polyhedron and the interval order, offering deeper algorithmic insights.  We rely heavily on the structure of \FaceGraph and its circulations in this work.

Our objective here is to determine the minimal TDI linear system for the length polyhedron of an interval order. 
This linear system, known as the Schrijver system, is unique for the length polyhedron, 
a consequence of a general theorem due Schrijver (see Theorem \ref{SchrijverUniqueTDI}). Schrijver systems are 
closely related to min-max theorems in combinatorial optimization. 
We define Schrijver systems in Section \ref{PrelimSection}. 
Our presentation there reflects the integer programming origins (see the classic text by Schrijver \cite{Schrijver})
that inspired our work.
In the literature, Schrijver 
systems have been characterized for various polyhedra, 
including $b$-matchings \cite{CookandPulleyblank}, odd-join polyhedra \cite{Sebo}, 
$T$-cut polyhedra \cite{Rizzi}, and the flow cone of series-parallel graphs \cite{BarbatoEtAl}.

This paper advances the understanding of the length polyhedron, focusing on the computation of its Schrijver system. 
Because the cycle inequalities derived from cycles of the \FaceGraph yield a TDI system,
the unique Schrijver system may be obtained by discarding from this system
cycle inequalities that can be expressed as a non-negative integral linear combination of others (Theorem \ref{UniqueMinimalTDI}).
We show in Theorem \ref{integerNonNegativeLinearCombinationForInequalities} that all
redundant cycle inequalities are necessarily
integral linear combinations of smaller cycle inequalities with respect to the weak ordering of the cycle inequalities defined in Section \ref{weak}.  

Algorithmically, 
the ordered list of all cycle inequalities, referred as to the {\it Weak-list}, is first constructed from the \FaceGraph. 
The {\it Schrijver list}, which contains the minimal TDI system, is then populated in stages 
testing inequalities following the weak order.  Irredundant inequalities are added as they are detected, applying Theorem \ref{WeakOrderSignificance} which
guarantees that a redundant cycle inequality may be recognized by testing (via a single linear program) whether it 
is a positive linear combination of smaller inequalities.
Graph circulations are introduced in Section  \ref{CyclesSection} as the basic graph theory tool 
to detect and manipulate dependencies among cycle inequalities.
Notably, if the \FaceGraph contains a polynomial number of cycles, 
this gives a polynomial-time algorithm to compute the Schrijver system.

It is also worth noting that neither total unimodularity (Theorem \ref{LengthPlusLocationIsTUM})
nor TDI (Proposition \ref{CYCTDI}) are sufficient to prove our
main theorem (Theorem \ref{integerNonNegativeLinearCombinationForInequalities})
which states that a redundant cycle inequality is a non-negative {\em integral} 
linear combination of smaller cycle inequalities.  
Section \ref{CyclesSection} introduces graph theory tools essential for establishing structural properties used later.

Section \ref{mainproof} includes a constructive proof of the main Theorem \ref{integerNonNegativeLinearCombinationForInequalities}. 
Finally, we mention implementation considerations and provide an example
to illustrate that the Schrijver system of an interval order may have exponential size.

For the reader's benefit, we include an example of an interval order 
to illustrate a canonical representation, 
\FaceGraph, cycle inequalities, and a Shrijver system. 
We chose, for this purpose, to use the interval order in Example 1 of Doignon and Pauwels \cite{DoignonPauwels}.
For expedience, we identify interval orders with their ascent sequences (see Bousquet-Mélou et al. \cite{ascent2010}).  So, to introduce this example's interval order $P$ corresponding to the ascent sequence $(0, 1, 2, 2, 0, 2, 2, 3)$ we write
$P=(0, 1, 2, 2, 0, 2, 2, 3)$.
The canonical representation and \FaceGraph for  $P$ are displayed in Figure \ref{keygraphexample}.
Table \ref{ExampleTDI} presents the ten inequalities of the Schrijver system together with cycles of the \FaceGraph $G_P$ generating these inequalities.

%%%%%%%%%%%%%%%%%%%%%%%%%%%%%%%%%%%%%%%%%%%%%
\section{Preliminaries} %%%%%%%%%%%%%%%%
\label{PrelimSection}
In this section, we introduce the two central structures associated with an interval order $P$ that form
the foundation of our analysis: the length polyhedron, $Q_P$, and the \FaceGraph, $G_P$. 
We begin with definitions and theorems that underpin the application of integer programming principles to $Q_P$. Further elaboration appears in Section \ref{SchrijverSection}. After summarizing these foundational tools, we present the canonical representation of an interval order and describe the \FaceGraph model, drawing on relevant prior results (see \cite{LengthPolyhedronPaper}) that frame the core object of this study: the length polyhedron.

 \subsection{Integer programming tools}
 \label{IntegerProgramming}
A matrix is {\em totally unimodular} if all its square submatrices have a determinant equal to $0$, $1$, or $-1$.
The following reformulation of a theorem due to Hoffman and Kruskal \cite{HoffmanKruskal} explains the significance of totally unimodular matrices.
%\newpage
\begin{theorem} [Hoffman and Kruskal \cite{HoffmanKruskal}, see also \cite{Schrijver}, Corollary 19.2b] \label{TUM} An integral matrix $A$ is totally unimodular
	if and only if for all integral vectors $\boldsymbol{b}$ and $\boldsymbol{c}$ both sides of the linear programming duality
	equation
	$$\max\{\boldsymbol{c}\boldsymbol{x} : \boldsymbol{x} \geq \boldsymbol{0}, A\boldsymbol{x} \leq \boldsymbol{b}\}
	= \min\{\boldsymbol{y}\boldsymbol{b} : \boldsymbol{y} \geq \boldsymbol{0}, \boldsymbol{y}A = \boldsymbol{c} \}
	$$
	are achieved by integral vectors $\boldsymbol{x}$ and $\boldsymbol{y}$, if they are finite.
\end{theorem}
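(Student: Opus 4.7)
The plan is to establish the equivalence in two directions, linking total unimodularity directly to integrality of vertices of the primal--dual pair of polyhedra appearing in the statement.

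For the forward direction (total unimodularity implies integer primal and dual optima), I would first introduce nonnegative slack variables $\boldsymbol{s}$ to put the primal in standard form $A\boldsymbol{x}+\boldsymbol{s}=\boldsymbol{b}$, $\boldsymbol{x},\boldsymbol{s}\geq\boldsymbol{0}$, and verify the elementary fact that the augmented matrix $[A\mid I]$ is totally unimodular whenever $A$ is (any square submatrix of $[A\mid I]$ can be expanded along its identity columns, reducing to a square submatrix of $A$ of the same or smaller size). Each vertex of the primal polyhedron corresponds to a basic feasible solution with basis matrix $B$ an $m\times m$ nonsingular submatrix of $[A\mid I]$, and its basic coordinates are $B^{-1}\boldsymbol{b}$. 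Since $\det B=\pm 1$, Cramer's rule forces $B^{-1}$ to have integer entries, hence $B^{-1}\boldsymbol{b}$ is integral whenever $\boldsymbol{b}$ is integral. Applying the same argument to the dual --- whose standard form has constraint matrix built from $A^{T}$ and $I$, still totally unimodular --- produces an integer dual optimizer of matching value.

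For the reverse direction (integrality of LP pairs implies total unimodularity), I would fix an arbitrary $k\times k$ nonsingular submatrix $B$ of $A$ and show that $\det B=\pm 1$. The strategy is to construct integer data $\boldsymbol{b}$ and $\boldsymbol{c}$ so that the unique primal optimum reveals $B^{-1}\boldsymbol{t}$ among its coordinates for an essentially arbitrary integer vector $\boldsymbol{t}$. Concretely, I would force the variables outside the column indices of $B$ to zero (using bound constraints and a carefully chosen objective), perturb the right-hand side corresponding to $B$ by $B\boldsymbol{v}$ for a large integer vector $\boldsymbol{v}$ to guarantee feasibility, and then set the remaining coordinates of $\boldsymbol{b}$ to $\boldsymbol{t}$. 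By the integrality hypothesis, $B^{-1}\boldsymbol{t}$ is then integer for every integer $\boldsymbol{t}$, which forces $B^{-1}\in\mathbb{Z}^{k\times k}$. Finally, $\det B\cdot \det(B^{-1})=1$ with both determinants integer yields $\det B=\pm 1$.

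I anticipate the reverse direction to be the main obstacle, specifically the construction that realizes the prescribed submatrix $B$ as the optimal basis. The subtlety is twofold: first, ensuring feasibility of the basis $B$ (handled by translating $\boldsymbol{b}$ by a large integer multiple of $B$ applied to the all-ones vector), and second, ensuring that $B$ is actually the basis returned by any optimal vertex (handled by designing $\boldsymbol{c}$ to strictly reward the $k$ basic variables and penalize the others, or equivalently by further bound constraints). Once these engineering issues are settled, the integrality hypothesis ranges over enough right-hand sides to recover every column of $B^{-1}$, and the determinantal conclusion is immediate.
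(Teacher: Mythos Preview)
The paper does not prove this theorem at all: it is quoted as a classical result of Hoffman and Kruskal, with a pointer to Corollary~19.2b in Schrijver's book, and is used purely as a black box (in the proof of Corollary~\ref{TDI} and in Proposition~\ref{Circulation2LinearCombinationOfCycleInequalities}). So there is no ``paper's own proof'' to compare against.

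That said, your sketch is essentially the standard textbook argument one finds in the cited reference. The forward direction via slacks, the total unimodularity of $[A\mid I]$, and Cramer's rule on basic feasible solutions is exactly right. For the converse, your idea of engineering $\boldsymbol{b}$ and $\boldsymbol{c}$ so that a prescribed nonsingular $k\times k$ submatrix $B$ becomes the optimal basis, and then varying the right-hand side to read off the columns of $B^{-1}$, is the correct strategy; but your description of which coordinates of $\boldsymbol{b}$ get perturbed and which get set to $\boldsymbol{t}$ is a bit tangled (remember $B$ selects both a row set and a column set of $A$, and the basis of the slack-augmented system involves identity columns for the rows outside that row set). You would need to straighten out those indices before the argument goes through cleanly, but the underlying plan is sound and matches the proof in Schrijver's text.
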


Ghouila-Houri \cite{G-H} proved a useful combinatorial characterization of total unimodularity.
A $\{0,\pm 1\}$-matrix matrix $A$ has an {\em equitable bicoloring} if its columns can be partitioned into blue columns and red columns
so that, for every row of $A$, the sum of the entries in the blue columns differs from the sum of the entries in the red columns by at most one.  

\begin{theorem} [Ghouila-Houri \cite{G-H}, see also \cite{Schrijver}, Theorem 19.3] \label{Ghouila-Houri} A $\{0,\pm 1\}$-matrix $A$ is totally unimodular if and only if
	every submatrix of $A$ has an equitable bicoloring.
\end{theorem}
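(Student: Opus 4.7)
The plan is to prove each direction separately, with the forward direction being far more routine than the reverse.

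\textbf{Forward direction.} Any submatrix $M$ of $A$ is itself totally unimodular, so to exhibit an equitable bicoloring of its columns I would use an LP rounding argument. Consider the polytope
\[
P = \{x \in \mathbb{R}^n : \mathbf{0} \leq x \leq \mathbf{1},\ \lfloor \tfrac{1}{2}M\mathbf{1}\rfloor \leq Mx \leq \lceil \tfrac{1}{2}M\mathbf{1}\rceil\}.
\]
It is nonempty (it contains $x = \tfrac{1}{2}\mathbf{1}$), its constraint matrix built from $I$ and $M$ is totally unimodular, and the right-hand side is integral; by Theorem \ref{TUM}, $P$ has an integer vertex $x \in \{0,1\}^n$. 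Coloring column $j$ blue if $x_j = 1$ and red otherwise, the blue-minus-red row sum equals $2(Mx)_i - (M\mathbf{1})_i \in \{-1,0,1\}$, as required.

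\textbf{Reverse direction.} I would proceed by induction on $k$, showing every $k \times k$ submatrix $B$ of $A$ has $|\det B| \leq 1$. The case $k = 1$ is immediate. For $k \geq 2$, assume every proper minor of $B$ lies in $\{0,\pm 1\}$ and that $B$ is invertible (otherwise $\det B = 0$ and we are done). By Cramer's rule, $C = (\det B) \cdot B^{-1}$ is integral with entries in $\{0,\pm 1\}$; its first column $c \in \{0,\pm 1\}^n$ is nonzero and satisfies $Bc = (\det B)\, e_1$. Let $J = \mathrm{supp}(c) = J^+ \cup J^-$ be split by the signs of the entries of $c$, and let $\delta \in \{\pm 1\}^J$ be an equitable bicoloring of the submatrix $B|_J$, supplied by the hypothesis.

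\textbf{Parity step and conclusion.} For each row $i \geq 2$ the identity $(Bc)_i = 0$ yields $\sum_{j \in J^+} B_{ij} = \sum_{j \in J^-} B_{ij}$, so $\sum_{j \in J} B_{ij}$ is even. Flipping a sign in $\delta$ changes $\sum_{j \in J} \delta_j B_{ij}$ by an even amount, so this weighted sum has the same parity as $\sum_{j \in J} B_{ij}$; combined with $|\sum_{j \in J} \delta_j B_{ij}| \leq 1$, it must equal $0$ for $i \geq 2$. Extending $\delta$ by zeros off $J$, I obtain $B\delta = \alpha\, e_1$ for some $\alpha \in \{-1,0,1\}$; since $B$ is invertible and $\delta \neq 0$, $\alpha \neq 0$, and then $\delta = \alpha B^{-1} e_1 = (\alpha/\det B)\, c$. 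Because both $\delta$ and $c|_J$ are $\pm 1$-valued on $J$, this equality forces $|\det B| = |\alpha| = 1$, closing the induction. The subtle step, and the one I expect to be the main obstacle, is precisely this parity argument: the hypothesis provides only \emph{some} equitable bicoloring, with no a priori relation to the sign pattern of the Cramer-derived vector $c$, and parity for the rows $i \geq 2$ is what bridges these and ultimately converts the bicoloring bound into a bound on $|\det B|$.
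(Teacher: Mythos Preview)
The paper does not supply its own proof of Theorem~\ref{Ghouila-Houri}; it is quoted as a classical result of Ghouila-Houri, with a reference to Schrijver's book (Theorem~19.3), and is used only as a tool in the proof of Theorem~\ref{LengthPlusLocationIsTUM}. Your argument is correct and is, in fact, essentially the standard proof one finds in that reference: the forward direction via LP rounding of $\tfrac{1}{2}\mathbf{1}$ against the totally unimodular system, and the reverse direction via Cramer's rule together with the parity step linking an arbitrary equitable bicoloring of the support columns to the cofactor vector $c$. So there is nothing to compare against in the paper itself, but your proposal stands on its own.
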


A linear system $A\boldsymbol{x} \leq \boldsymbol{b}$ is {\em totally dual integral} (TDI) if the minimum in the LP-duality equation
$\max\{\boldsymbol{w}^T\boldsymbol{x} : A\boldsymbol{x} \leq \boldsymbol{b} \} = \min\{\boldsymbol{b}^T \boldsymbol{y} : A^T \boldsymbol{y} = \boldsymbol{c}, \boldsymbol{y}\geq 0\}$ has an integer optimal solution $\boldsymbol{y}$ for all
integer vectors $\boldsymbol{c}$ for which the optimum is finite.

The question whether total dual integrality of a linear system remains totally dual after eliminating a 
variable by Fourier-Motzkin Elimination is the subject of 
 the following technical tresult due to Cook \cite{Cook}.
\begin{theorem}[Cook \cite{Cook}] \label{CookTDI} Let $A \boldsymbol{x} \leq \boldsymbol{b}$ be a TDI-system. 
	If each coefficient of the variable $x_1$ is either $0$, $1$, or $-1$, then the system obtained by eliminating $x_1$
	by Fourier-Motzkin elimination is also TDI.
\end{theorem}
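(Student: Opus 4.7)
The plan is to carry out the Fourier--Motzkin elimination explicitly, then pull back an integer objective to the original system, invoke its TDI property to obtain an integer dual optimum, and finally redistribute that optimum through a transportation argument to obtain an integer dual optimum for the eliminated system.

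First I would partition the rows of $A\boldsymbol{x} \leq \boldsymbol{b}$ according to the coefficient of $x_1$, writing $I^+$, $I^-$, $I^0$ for the index sets where the coefficient is $+1$, $-1$, $0$ respectively, and denoting by $\boldsymbol{x}'$ the remaining variables. Fourier--Motzkin elimination then produces a system $A'\boldsymbol{x}' \leq \boldsymbol{b}'$ consisting of the row $k$ for each $k \in I^0$ (unchanged) together with, for each pair $(i,j) \in I^+ \times I^-$, one new row obtained by adding rows $i$ and $j$ with right-hand side $b_i + b_j$. To verify TDI, I would fix an arbitrary integer objective $\boldsymbol{c}'$ on $\boldsymbol{x}'$ for which the eliminated primal is finite, extend it to $\boldsymbol{c} = (0, \boldsymbol{c}')$ on the original variables, and note that the original primal is still finite: any feasible $\boldsymbol{x}'$ for the eliminated system lifts to a feasible $\boldsymbol{x}$ by choosing $x_1$ inside the (nonempty, by the definition of elimination) interval forced by the $I^+$ and $I^-$ constraints. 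Applying TDI of the original system, the dual $\min\{\boldsymbol{b}^T \boldsymbol{y}: \boldsymbol{y}\geq \boldsymbol{0},\ A^T\boldsymbol{y}=\boldsymbol{c}\}$ attains its optimum at an integer vector $\boldsymbol{y}^*$, and the $x_1$-coordinate of $A^T\boldsymbol{y}=\boldsymbol{c}$ forces $\sum_{i\in I^+} y^*_i = \sum_{j\in I^-} y^*_j$.

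The main step is to redistribute $\boldsymbol{y}^*$ into an integer dual solution for the eliminated system: I need a non-negative integer matrix $(z_{ij})_{i\in I^+,\,j\in I^-}$ with prescribed row sums $y^*_i$ and column sums $y^*_j$. Because the two marginal totals are equal non-negative integers, such a matrix exists (for example, by a northwest-corner construction, or equivalently by total unimodularity of the bipartite incidence matrix), so I would set $y'_{(i,j)} := z_{ij}$ and $y'_k := y^*_k$ for $k \in I^0$. A direct computation then yields $A'^T\boldsymbol{y}' = \boldsymbol{c}'$ and
\[
\boldsymbol{b}'^T\boldsymbol{y}' \;=\; \sum_{(i,j)\in I^+\times I^-} z_{ij}(b_i+b_j) + \sum_{k\in I^0} y^*_k\,b_k \;=\; \boldsymbol{b}^T\boldsymbol{y}^*.
\]
Finally, to confirm optimality, I would show that any feasible dual $\bar{\boldsymbol{y}}'$ for the eliminated system lifts (by summing the pair-weights over one index to recover weights on $I^+$ and $I^-$) to a feasible dual for the original system with the same objective value; hence the two dual optima coincide and $\boldsymbol{y}'$ is an integer minimizer.

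The delicate ingredient is the transportation step, and it is precisely where the hypothesis that the coefficients of $x_1$ lie in $\{0,\pm 1\}$ becomes essential: any other coefficient pattern would introduce scalar multipliers in the equation $\sum_i \alpha_i y^*_i = \sum_j \beta_j y^*_j$, destroying the clean integer-marginal structure and potentially forcing fractional redistributions. Everything else amounts to careful bookkeeping, so I expect this transportation construction (and the verification that its output satisfies $A'^T\boldsymbol{y}'=\boldsymbol{c}'$ coordinate by coordinate) to be the only nontrivial part of the proof.
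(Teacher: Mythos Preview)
The paper does not prove this theorem; it is quoted as an external result due to Cook and used as a black box in the proof of Proposition~\ref{CYCTDI}. So there is no ``paper's own proof'' to compare against.

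That said, your argument is correct and is essentially Cook's original proof. The partition into $I^+, I^-, I^0$, the extension of the objective by a zero in the $x_1$-slot, the observation that the dual constraint in the $x_1$-coordinate forces $\sum_{i\in I^+} y^*_i = \sum_{j\in I^-} y^*_j$, and the transportation redistribution are exactly the standard ingredients. Your verification that $A'^T\boldsymbol{y}'=\boldsymbol{c}'$ and $\boldsymbol{b}'^T\boldsymbol{y}'=\boldsymbol{b}^T\boldsymbol{y}^*$ via the marginal conditions on $z_{ij}$ is correct, as is the reverse lifting showing the two dual optima coincide. Your diagnosis of where the $\{0,\pm1\}$ hypothesis enters is also accurate: without it the Fourier--Motzkin step would combine rows with rational multipliers, and the balance equation would carry coefficients that obstruct an integer transportation solution.

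One small point worth making explicit in a write-up: when you assert that the original primal is finite whenever the eliminated primal is, you are implicitly using that the projection of $\{A\boldsymbol{x}\le\boldsymbol{b}\}$ onto the $\boldsymbol{x}'$-coordinates is exactly the feasible region of the eliminated system (this is the defining property of Fourier--Motzkin elimination), together with $c_1=0$ so that the objective depends only on $\boldsymbol{x}'$. You have this, but it is the step a reader is most likely to want spelled out.
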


% \textcolor{magenta}{THUMB$\Longrightarrow$}%==============>>

A totally dual integral system is a {\em minimal TDI-system} if any proper
subsystem which defines the same polyhedron is not TDI.
Equivalently, a totally dual integral system is a minimal TDI-system if and only if
every constraint in the system determines a supporting hyperplane and is
not the non-negative integral combination of other constraints in the system.
Schrijver proved the following theorem.
\begin{theorem}[Schrijver \cite{SchrijverTDI}] \label{SchrijverUniqueTDI}
A full-dimensional rational polyhedron is determined
by a unique minimal TDI-system $A \boldsymbol{x} \leq \boldsymbol{b}$ of linear
inequalities with $A$ integral.  
\end{theorem}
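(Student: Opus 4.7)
The plan is a standard two-step argument: first establish existence of a minimal TDI system by iterative pruning, then prove uniqueness by identifying each row of any minimal TDI system with a specific combinatorial invariant of $P$, namely the primitive integer inner-normal of a facet. Full-dimensionality of $P$ enters crucially in uniqueness, since it guarantees each facet $F$ has a unique (up to sign) primitive integer normal $\boldsymbol{n}_F$, which we orient so that $P \subseteq \{\boldsymbol{x} : \boldsymbol{n}_F^T\boldsymbol{x} \leq \beta_F\}$.

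For existence, I would start with any integer TDI system $A\boldsymbol{x} \leq \boldsymbol{b}$ defining $P$ (such systems exist because $P$ is rational). Iteratively delete any constraint whose pair $(\boldsymbol{a}_i, b_i)$ can be written as a non-negative integer combination of the remaining pairs $(\boldsymbol{a}_j, b_j)$. Each deletion preserves $P$ trivially and preserves TDI, since any integer dual solution for the original system converts to one for the reduced system by substituting the integer expression wherever the deleted row carries positive weight. The process terminates, producing a system satisfying both criteria in the excerpt's characterization of minimal TDI systems.

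For uniqueness, let $A\boldsymbol{x} \leq \boldsymbol{b}$ be any minimal TDI system for $P$. The key claim is that every row is $(\boldsymbol{n}_F, \beta_F)$ for exactly one facet $F$. To show such a row exists for each facet $F$, apply TDI to the LP $\max\{\boldsymbol{n}_F^T\boldsymbol{x} : A\boldsymbol{x}\leq \boldsymbol{b}\}$, whose optimum value is $\beta_F$ and whose optimal face is $F$; the integer dual $\boldsymbol{y}\geq\boldsymbol{0}$ with $A^T\boldsymbol{y} = \boldsymbol{n}_F$ has support (by complementary slackness at a point in the relative interior of the $(n{-}1)$-dimensional facet $F$) confined to rows of the form $k_i\boldsymbol{n}_F$ with $k_i\in\mathbb{Z}_{\geq 1}$. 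Then $\sum_i y_ik_i = 1$ in non-negative integers forces exactly one positive summand with $y_i = k_i = 1$, producing a row equal to $\boldsymbol{n}_F$. No further row $k'\boldsymbol{n}_F$ with $k' \geq 2$ can occur, since that would be $k'$ copies of the primitive row, violating minimality. To rule out rows supporting $P$ only at a lower-dimensional face $G$, the normal cone at $G$ is generated by primitive normals $\boldsymbol{n}_F$ with $F \supseteq G$, all already in the system by the preceding step, and a Hilbert-basis argument applied to TDI on $\max\{\boldsymbol{a}^T\boldsymbol{x} : A\boldsymbol{x}\leq \boldsymbol{b}\}$ expresses $\boldsymbol{a}$ as a non-negative integer combination of those primitive rows, again contradicting minimality. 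Finally, the right-hand sides are pinned down by $\beta_F = \max_{\boldsymbol{x}\in P}\boldsymbol{n}_F^T\boldsymbol{x}$, completing uniqueness.

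The main obstacle I anticipate is the last substep---excluding rows whose supporting hyperplane is not a facet---because the trivial integer dual $y_{\boldsymbol{a}} = 1$ is always available and must be circumvented using the Hilbert-basis structure of the normal cone at the supporting face, together with the fact that the primitive facet normals already occur in the system. This is why the facet-row identification must precede ruling out non-facet rows. The remaining pieces are clean applications of LP duality and the integrality guaranteed by TDI.
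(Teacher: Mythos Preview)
The paper does not prove this theorem; it is quoted from the literature (Schrijver \cite{SchrijverTDI}) and used as a black box. So there is no ``paper's own proof'' to compare against.

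That said, your proposed proof has a genuine conceptual error in the uniqueness argument. You claim that in any minimal integral TDI system every row must be the primitive normal $(\boldsymbol{n}_F,\beta_F)$ of some facet $F$, and you try to rule out rows whose supporting hyperplane meets $P$ only in a lower-dimensional face $G$. But such rows \emph{do} occur in minimal TDI systems. A standard example in $\mathbb{R}^2$ is $P=\{(x,y): x\le 0,\; x+2y\le 0\}$: this two-inequality facet description is \emph{not} TDI (for $\boldsymbol{c}=(1,1)$ the dual has no integer optimum), and one must add the non-facet inequality $x+y\le 0$; the resulting three-inequality system is the unique minimal TDI system, yet $P$ has only two facets. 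Your ``Hilbert-basis argument'' in the last substep assumes implicitly that the primitive facet normals at $G$ already form a Hilbert basis of the normal cone at $G$; in general they only generate it as a real cone, and the gap between the two is precisely what forces extra rows into a TDI system. You correctly flag this step as the obstacle, but the obstacle is not surmountable along the route you describe---it reflects a false target, not a technical difficulty.

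The correct characterization (and the content of Schrijver's proof) is that the rows of the minimal integral TDI system are, for each face $F$, exactly the members of the Hilbert basis of the normal cone at $F$ that do not already lie in the Hilbert basis of the normal cone at some strictly larger face. Full-dimensionality is what makes the right-hand sides, and the integer normalization of each left-hand side, unique. Your existence half is fine; to repair uniqueness you should replace ``primitive facet normal'' by ``Hilbert-basis element of some normal cone'' throughout and then argue irredundancy and completeness against that target.
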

The unique minimal TDI-system is now referred to as a {\em Schrijver system},
following the nomenclature introduced by Cook and Pulleyblank \cite{CookandPulleyblank}.
%%%%%%%%%%%%%%%%%%%%%%%%%%%%%%%%%%%%%%%%
\subsection{The \FaceGraph and the length polyhedron} %%%%%%%%%%%%
\label{KeygraphSection}
 Let $P=([n],\prec)$ be an interval order. The \FaceGraph $G_P$  is a colored arc-weighted directed graph whose arcs are defined by certain pairs of the canonical representation of $P$. 
%	\subsection{Canonical representation of an interval order} %%%%%%%%%%%%

Greenough proved  \cite[Theorem 2.4]{Greenough}  that the {\it magnitude representation} of  
$P$, the interval representation of $P$ minimizing the number of distinct endpoints, is essentially unique in the sense that, disregarding the `physical' locations, the endpoint incidences agree in every minimal endpoint representation. We define the {\it canonical representation} of $P$ as a magnitude representation on integral interval endpoints $0,1,\ldots. m-1$, where $m$ is the {\em magnitude} of $P$. 

In relation to the fundamental inequalities in (\ref{LOC}) we define the concept of `slack' as follows.
Let $\mathcal{R}=[\ell_x,r_x]_{x\in[n]}$  be an integral representation of $P$.
For  $x,y\in [n]$ the \emph{slack in $\mathcal{R}$}  of the pair $(x,y)$  is defined to be
\[s_{\mathcal{R}}(x,y)= 
\left\{
\begin{array}{lcl}
	\ell_y-r_x-1 &\text{ if}  & x\prec y\\
	r_y-\ell_x & \text{ if}  &    x\|y\\
	\rho_x= r_x-\ell_x& \text{ if}  &    x=y
\end{array} \quad .
\right.
\]

Note that $s_{\mathcal{R}}(x,y)\geq 0$, and it is defined for every pair $(x,y)$, unless $y\prec x$.

A pair $(x,y)$ is a {\it slack zero pair} provided that $s_{\mathcal{C}}(x,y)=0$,
where $C$ is the canonical representation of $P$.
A slack zero pair $(x,y)$ is a {\em slack zero sharp pair} if $x \| y$; it is a {\em slack zero cover pair} if $x\prec y$.
Additionally,
by definition, $(x,x)$ is a slack zero pair if and only if the interval assigned to $x$ in $C$ has length zero.
Figure \ref{slackzeropairs} displays slack zero pairs in the canonical representation. 

 According to the Slack theorem   \cite[Theorem 3.6]{LengthPolyhedronPaper}, if $C$ is the canonical representation of $P$, then $s_{\mathcal{C}}(x,y)\leq s_{\mathcal{R}}(x,y)$, for every $x,y$, $y\not\prec x$.  As a corollary, the property of being a slack zero pair in $P$ is independent of the particular representation of $P$.

\begin{figure}[htp]
\begin{center}
\begin{tikzpicture}[scale=.8]
  \foreach \i in {-1,...,1}{
   \draw[line width=.05em](\i,1)--(\i,2);}  
     \node[label=below:{$(x,y)$ slack zero sharp pair}]()at(0,0.25){};

\node[A,label=left:x](L)at(0,1.6){}; \node(R)at(1.6,1.6){};\draw[line width=.05em](L)--(R);
\node[A,label=right:y](R)at(0,1.25){}; \node[](L)at(-1.5,1.25){};\draw[line width=.05em](L)--(R);

\end{tikzpicture}\hskip2cm
\begin{tikzpicture}[scale=.8]
  \foreach \i in {-2,...,0}{
   \draw[line width=.05em](\i,1)--(\i,2);}  
     \node[label=below:{$(x,y)$ slack zero cover pair}]()at(0,0.25){};
 \node[label=above:{$(w,w)$ slack zero pair}]()at(-0.42,-0.25){};
\node[A](L)at(-2,1.6){}; 
\node[label=left:w](R)at(-2,1.6){};
% \draw[line width=.05em](L)--(R);
\node[A,label=left:y](L)at(0,1.6){}; 
\node[](R)at(1.1,1.6){};\draw[line width=.05em](L)--(R);
\node[A,label=right:x](L)at(-1,1.25){}; \node[](R)at(-2.5,1.25){};\draw[line width=.05em](L)--(R);
\end{tikzpicture}
\end{center}
\caption{Slack zero pairs shown in part of a canonical representation}
\label{slackzeropairs}
\end{figure}
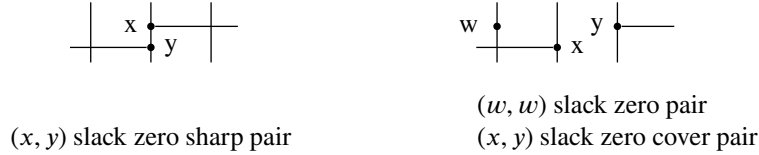
 
Let  $\mathcal{R}=\{\ell_i,r_i\}_{i\in[n]}$ be an interval representation of $P$.
The $2n$ endpoint variables define
the $\rho$-vector $(\rho_1,\ldots,\rho_n)\in\mathbb{R}^n$, where $\rho_i=r_i-\ell_i$, $1\leq i\leq n$. Doignon and Pauwels's result  \cite[Theorem 4]{DoignonPauwels} (see also  \cite[Theorem 4.2]{LengthPolyhedronPaper}) implies that the existence of an interval representation of $P$ with a given $\rho$-vector can be stated as the feasibility of 
the system
\begin{eqnarray}\label{LRO}
\left\{
\begin{array}{rcll}
 \ell_x+\rho_x+1&\leq& \ell_y\  & \text{\ if $(x,y)$ is a slack zero cover pair},\ \\
\ell_x&\leq& \ell_y+\rho_y\  & \text{\ if $(x,y)$ is a slack zero sharp pair},\\
-\rho_x&\leq& 0 &  \text{\ if $(x,x)$ is a slack zero  pair}.
\end{array}
\right.
\end{eqnarray}

The arcs of the \FaceGraph of $P$ are defined by  slack zero cover pairs and slack zero sharp pairs.
The {\it \FaceGraph} $G_P$ is a colored arc-weighted directed graph that has vertex set $V=\{\rho_1,\rho_2,\ldots,\rho_n\}$, and 
$\rho_x\black\rho_y$ is an arc (or a loop) in $G_P$ if and only if $(x,y)$ is a slack zero pair of $P$.
The color and the weight of $\rho_x\black\rho_y$ indicate the relation of the variables $\rho_x,\rho_y$ in the corresponding inequalities of the linear system (\ref{LRO}) defining the representation polyhedra. There are 
red loops, and blue or red arcs between $\rho_x,\rho_y\in V$  with weights as follows:
\begin{eqnarray}\label{FaceGraphWeights}
\left\{	
\begin{array}{crclll}
	\text{arc} & \text{weight}\\  
 	\rho_x\blue \rho_y \ &\ell_x + \rho_x + 1 &\leq& \ell_y  & \text{if }   (x,y) \text{ is a slack zero cover pair}, \\ 
     \rho_x\red \rho_y\  &\ell_x &\leq& \ell_y + \rho_y & \text{if }  (x,y) \text{ is a slack zero sharp pair}, \\ 
      \rho_x\red \rho_x \ &- \rho_x &\leq& 0 & \text{if }  \text{$(x,x)$ is a slack zero  pair}.
\end{array}
\right.
\end{eqnarray}  

The sum of the arc weights along a directed cycle $C\subset G_P$ is an inequality  
containing only $\rho$-variables. Every such {\it cycle inequality}  has the form 
\begin{eqnarray}\label{CYC}
 \gamma \ +\sum_{i\in A}\rho_i \ \leq \ \sum_{j\in B}\rho_j\  ,
\end{eqnarray}
where $\gamma$ and $A,B\subset X$ are disjoint sets defined as follows.

  For every   directed $3$-path $T=(\rho_x\rightarrow \rho_y\rightarrow \rho_z)$  along a cycle $C$ of the \FaceGraph,  $y\in A$ if both arcs of $T$ are blue, $y\in B$ if both arcs of $T$ are red, and $y\notin A\cup B$, otherwise.
Furthermore, $\gamma$ in (\ref{CYC}) counts the number of  blue arcs along $C$. 
A  loop at $\rho_y$ is considered as a singleton red cycle $C=(\rho_y)$,  and it has weight $\langle 0\leq \rho_y\rangle$.

 Figure \ref{keygraphexample} presents the interval order $P=(0, 1, 2, 2, 0, 2, 2, 3)$ in Example 1 from Doignon and Pauwels \cite{DoignonPauwels}. 
The canonical representation and \FaceGraph for  $P$ are displayed in Figure \ref{keygraphexample}.
Table \ref{ExampleTDI} presents the ten inequalities of the Schrijver system together with cycles of the \FaceGraph $G_P$ generating these inequalities.
For example, by the cycle interpretation rules, the cycle inequality (4) defined by  the directed cycle $C=(\rho_1,\rho_2,\rho_7,\rho_4,\rho_8,\rho_6,\rho_5)$
is $\langle 3+\rho_ 2+\rho_7\leq \rho_5+\rho_6+\rho_8\rangle$.
The \FaceGraph $G_P$ has $25$ directed cycles (including four loops) corresponding to $17$ distinct inequalities.  Seven of these cycle inequalities are redundant.
For example, $\langle 0\le \rho_{3}+\rho_{4}\rangle$ and $\langle 3+\rho_{2}+\rho_{6}\le \rho_{3}+\rho_{5}+\rho_{7}+\rho_{8}\rangle$ are redundant.

% ###########################################
% # Figure P=(0, 1, 2, 2, 0, 2, 2, 3) canonical rep. and key graph
\definecolor{edgeBlack}{RGB}{0,0,0}
\definecolor{edgeBlue}{RGB}{0,0,255}
\definecolor{edgeRed}{RGB}{255,0,0}

% \begin{figure}[htp]
	\begin{figure}[H]
		\begin{center}
			%\begin{minipage}{.45\textwidth}
			\begin{tikzpicture}[scale=1.1]
				\node () at (7,3){};
				\foreach \i in {0,...,4}{
					\draw[line width=.02em](3+\i,4)--(3+\i,6);
					\node[]()at(3+\i,6.2){\tiny{\i}};   }
				
				\node[A,label=left:1](L) at (3,5) {};%1
				
				\node[A,label=left:6](L) at (5,5) {};
				\node[A](R) at (6,5) {};
				\draw[line width=.05em](L)--(R); %6
				
				\node[A,label=left:7](L) at (5,4.5) {};
				\node[A](R) at (6,4.5) {};
				\draw[line width=.05em](L)--(R); %7
				
				\node[A,label=left:5](L) at (3,5.5) {};
				\node[A](R) at (5,5.5) {};
				\draw[line width=.05em](L)--(R); %5
				
				\node[A,label=left:8](L) at (6,5.5) {};
				\node[A](R) at (7,5.5) {};
				\draw[line width=.05em](L)--(R);%8 
				
				\node[A,label=left:2]()at (4,4.5) {};%2
				\node[A,label=left:4]()at (7,5) {};%4
				\node[A,label=left:3]()at (7,4.5) {};%3
			\end{tikzpicture}
			\hskip2cm
			\begin{tikzpicture}[scale=.68]

				\node[A,label=above:{ $\rho_1$}](1) at (2.5,-1.2) {}; 
				\node[A,label=above left:{ $\rho_2$}](2) at (-0.5,0.0) {};%2%
				\node[A,label=below left:{ $\rho_3$}](3) at (1,6) {};%3%
				\node[A,label=below right:{ $\rho_4$}](4) at (4,6) {};%4%
				\node[A,label=right:{ $\rho_5$}](5) at (5.5,0.0) {};
				\node[A,label=left:{ $\rho_6$}](6) at (0,3) {};%6%
				\node[A,label=below:{ $\rho_7$}](7) at (2.5,1.2) {};%7%
				\node[A,label=right:{ $\rho_8$}](8) at (5,3) {};

				\draw[line width=.1em,red,->](3)--(8);
				\draw[line width=.1em,red,->](4)--(8);
				\draw[line width=.1em,red,->](8)--(6);
				\draw[line width=.1em,red,->](5)--(1);
				\draw[line width=.1em,cyan,->](5)--(8);
				\draw[line width=.1em,red,->](6)--(5);
				\draw[line width=.1em,red,->](7)--(5);
				\draw[line width=.1em,cyan,->](2)--(6);
				\draw[line width=.1em,cyan,->](2)--(7);
				\draw[line width=.1em,cyan,->](1)--(2);
				\draw[line width=.1em,cyan,->](6)--(4);
				\draw[line width=.1em,cyan,->](6)--(3);
				\draw[line width=.1em,cyan,->](7)--(4);
				\draw[line width=.1em,cyan,->](7)--(3);
				\draw[red,line width=.1em,->] (1) edge[out=200, in=-65, looseness=16] (1);
				\draw[red,line width=.1em,->] (2) edge[out=155, in=-110, looseness=16] (2);
				\draw[red,line width=.1em,->] (3) edge[out=90, in=180, looseness=16] (3);
				\draw[red,line width=.1em,->] (4) edge[out=90, in=0, looseness=16] (4);
				\draw[red,line width=.1em,->] (3) edge[out=45, in=135, looseness=0.5]  [bend left] (4);
				\draw[red,line width=.1em,->] (4) edge[out=135, in=45, looseness=0.5] [bend left] (3);
				\draw[line width=.1em,red,->] (8)--(7);
				% \draw[line width=.1em,red,->] (4)--(3);
			\end{tikzpicture}
		\end{center}
		\caption{The interval order $P=(0, 1, 2, 2, 0, 2, 2, 3)$.  Its canonical representation (left)   
			and its \FaceGraph $G_P$ (right).}
		\label{keygraphexample}
	\end{figure}
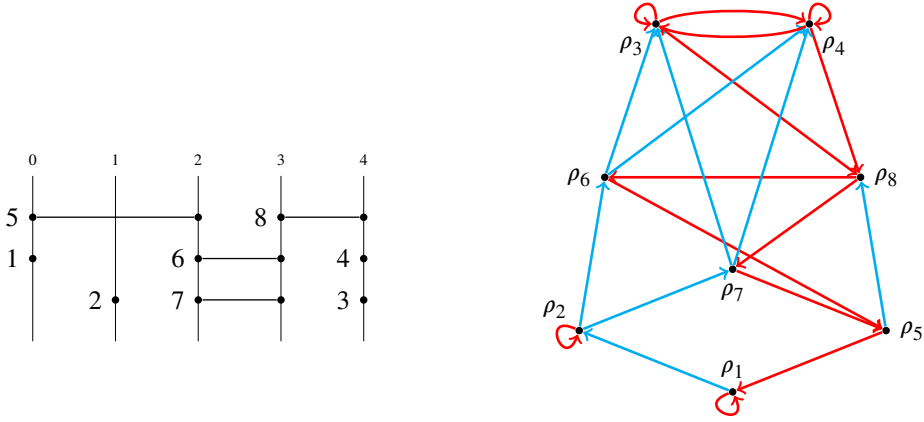
% ###########################################
\begin{center}
	\begin{longtable}{r|c|l}
		\caption{All ten inequalities in the Schrijver system of the length polyhedron of the interval order $P=(0, 1, 2, 2, 0, 2, 2, 3)$. \label{ExampleTDI}}\\
		\hline \Tstrut
		\textbf{\#} & \textbf{facet-defining inequality} & \textbf{a corresponding \FaceGraph cycle} \\
		\hline
		
		\endfirsthead
		\multicolumn{3}{c}%
		{\tablename\ \thetable\ -- \textit{inequalities continued from previous page}} \\
		\hline \Tstrut
		\textbf{\#} & \textbf{facet-defining inequality} & \textbf{a corresponding \FaceGraph cycle} \\
		\hline
		\endhead
		\endfoot
		\hline
		\endlastfoot
		\Tstrut
		1 & $0 \leq \rho_{1}$ & $\Sharp  \rho_1 \Sharp $ \\ \Tstrut
		2 & $0 \leq \rho_{2}$ & $\Sharp  \rho_2 \Sharp $ \\ \Tstrut
		3 & $0 \leq \rho_{3}$ & $\Sharp  \rho_3 \Sharp $ \\ \Tstrut
		4 & $0 \leq \rho_{4}$ & $\Sharp  \rho_4 \Sharp $\\ \Tstrut
		5 & $1 \leq \rho_{8}$ & $ \Cover \rho_3 \Sharp  \rho_8 \Sharp  \rho_6 \Cover$ \\ \Tstrut
		6 & $1 \leq \rho_{6}$ & $ \Sharp  \rho_5 \Cover \rho_8 \Sharp  \rho_6 \Sharp$ \\ \Tstrut
		7 & $1 \leq \rho_{7}$ & $ \Sharp  \rho_5 \Cover \rho_8 \Sharp  \rho_7 \Sharp$ \\ \Tstrut
		8 & $2 + \rho_{2} \leq \rho_{5}$ & $ \Sharp  \rho_1 \Cover \rho_2 \Cover \rho_6 \Sharp  \rho_5 \Sharp$ \\ \Tstrut
		9 & $3 + \rho_{2}+\rho_{6} \leq \rho_{5}+\rho_{7}+\rho_{8}$ & $ \Sharp  \rho_1 \Cover \rho_2 \Cover \rho_6 \Cover \rho_3 \Sharp  \rho_8 \Sharp  \rho_7 \Sharp  \rho_5 \Sharp$ \\ \Tstrut
		10 & $3 + \rho_{2}+\rho_{7} \leq \rho_{5}+\rho_{6}+\rho_{8}$ & $ \Sharp  \rho_1 \Cover \rho_2 \Cover \rho_7 \Cover \rho_3 \Sharp  \rho_8 \Sharp  \rho_6 \Sharp  \rho_5 \Sharp$  \Bstrut
		
	\end{longtable}
\end{center}
%%%%%%%%%%%%%%%%%%%%%%%%%%%%%%%%%%%%%%%%%%%
%  End Inequalities 
%%%%%%%%%%%%%%%%%%%%%%%%%%%%%%%%%%%%%%%%%%%
%%%%%%%%%%%%%%%%%%%%%%%%%%%%%%%%%%%%%%
%\subsection{The length polyhedron  of an interval order} %%%%%%%%%%%%%%%%%
 
We proved \cite{LengthPolyhedronPaper} that eliminating the left endpoint variables from the linear system (\ref{LRO}) by  Fourier-Motzkin elimination results in the equivalent linear system in variables $\rho_1,\ldots,\rho_n$ formed by the inequalities  (\ref{CYC}), one for each directed cycle $C$  of $G_P$.
  Our prior results about the length polyhedron of an interval order are summarized next (c.f. \cite[Theorem 5.1]{LengthPolyhedronPaper}).
 \begin{theorem}	
 \label{fromlength}{\em (Biro et al. \cite{LengthPolyhedronPaper})} \label{QPdef}  The {\em length polyhedron} $Q_P\subset \mathbb{R}^n$ is the set of all feasible $\rho$-vectors satisfying  the linear system of the inequalities (\ref{CYC}) for all directed cycles of the the \FaceGraph;  
 $Q_P$  is a full-dimensional affine cone with apex at the length vector of the canonical representation of $P$.
\end{theorem}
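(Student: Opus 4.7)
The plan is to derive the cycle-inequality description of $Q_P$ by Fourier-Motzkin elimination of the left-endpoint variables $\ell_1,\ldots,\ell_n$ from the representation system (\ref{LRO}), and then to read off the affine-cone structure from the observation that the canonical length vector $\boldsymbol{\rho}^*$ tightens every cycle inequality.

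For the projection step, I would first observe that in (\ref{LRO}) every $\ell$-variable has coefficient in $\{-1,0,+1\}$, and that by (\ref{FaceGraphWeights}) the arcs of $G_P$ are in bijection with the inequalities of (\ref{LRO}): a blue arc $\rho_x \to \rho_y$ encodes $\ell_x - \ell_y + \rho_x \leq -1$, a red arc $\rho_x \to \rho_y$ encodes $\ell_x - \ell_y - \rho_y \leq 0$, and a red loop at $\rho_x$ encodes $-\rho_x \leq 0$. Eliminating a single $\ell_v$ combines each inequality in which $\ell_v$ has coefficient $+1$ (an arc leaving $\rho_v$) with each inequality in which it has coefficient $-1$ (an arc entering $\rho_v$); the sum corresponds to a length-two directed walk through $\rho_v$. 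Iterating over all left-endpoint variables and decomposing each resulting $\ell$-free closed walk into edge-disjoint simple directed cycles (using loops for the degenerate case), every surviving inequality is a sum of inequalities indexed by simple directed cycles of $G_P$. A direct coefficient check at an internal vertex $\rho_y$ of such a cycle shows that $\rho_y$ appears with coefficient $+1$ exactly when both incident arcs are blue (so $y \in A$), with coefficient $-1$ exactly when both are red (so $y \in B$), and with coefficient $0$ when the colors differ; the constant $\gamma$ on the left counts the blue arcs, each of which contributes $-1$ on the right. This reproduces (\ref{CYC}) verbatim, so the projection of the feasible set of (\ref{LRO}) onto the $\rho$-coordinates is exactly the polyhedron defined by the cycle inequalities.

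For the affine-cone structure, I would note that each arc of $G_P$ corresponds to a slack-zero pair in the canonical representation, so the inequality of (\ref{LRO}) attached to the arc is tight at the canonical $(\boldsymbol{\ell}^*,\boldsymbol{\rho}^*)$. Summing tight inequalities around a cycle preserves equality, so $\boldsymbol{\rho}^*$ satisfies every cycle inequality (\ref{CYC}) with equality. Substituting $\boldsymbol{\rho} = \boldsymbol{\rho}^* + \boldsymbol{\rho}'$ therefore eliminates the constant $\gamma$ from every cycle inequality and reduces (\ref{CYC}) to the homogeneous inequality $\sum_{i \in A}\rho'_i \leq \sum_{j \in B}\rho'_j$. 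Hence $Q_P - \boldsymbol{\rho}^*$ is a polyhedral cone with apex at the origin, and $Q_P$ is an affine cone with apex $\boldsymbol{\rho}^*$. For full dimensionality I would show that $\boldsymbol{\rho}^* + t \boldsymbol{e}_i \in Q_P$ for every coordinate direction $\boldsymbol{e}_i$ and every $t \geq 0$ by modifying the canonical representation to lengthen the $i$-th interval by $t$ and shifting every endpoint at or after the old $r_i$ to the right by $t$; a short case analysis on comparable and incomparable pairs verifies that every constraint of (\ref{LOC}) is preserved.

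The principal technical obstacle is the Fourier-Motzkin bookkeeping: one must argue both that every cycle inequality of the projected system is produced by elimination and that no spurious inequality survives. The $\{0,\pm 1\}$ coefficient pattern of (\ref{LRO}) together with the coloring conventions of (\ref{FaceGraphWeights}) are arranged precisely so that cycles of $G_P$, and only cycles, correspond to maximal $\ell$-cancellations during elimination; this same unimodular structure is what later allows the application of Cook's Theorem~\ref{CookTDI} to conclude total dual integrality of the cycle-inequality system.
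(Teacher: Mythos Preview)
This theorem is quoted in the paper as a result from prior work and is not proved here; the text only records that the cycle-inequality description was obtained in \cite{LengthPolyhedronPaper} by Fourier--Motzkin elimination of the $\ell$-variables from~(\ref{LRO}). Your outline of that elimination---reading the arcs of $G_P$ as the individual inequalities of~(\ref{LRO}), combinations through a vertex as two-step walks, and the $\ell$-free outputs as closed walks that decompose into simple directed cycles---is exactly the intended mechanism, and your derivation of the apex from tightness of every arc inequality at the canonical $(\boldsymbol{\ell}^*,\boldsymbol{\rho}^*)$ is correct.

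Your full-dimensionality step, however, does not work as written. The map ``shift every endpoint at or after the old $r_i$ to the right by $t$'' does not produce $\boldsymbol{\rho}^*+t\boldsymbol{e}_i$. If $\ell_i=r_i$ (a length-zero interval in the canonical representation) then $\ell_i$ is shifted along with $r_i$, so $\rho_i$ does not change at all. Even when $\ell_i<r_i$, every other interval $j$ with $\ell_j<r_i\le r_j$ has $r_j$ shifted but not $\ell_j$, so its length also grows by $t$; the direction you actually obtain is $\sum_{j:\,\ell_j<r_i\le r_j}\boldsymbol{e}_j$, not $\boldsymbol{e}_i$. To repair this part you must either argue that the family of directions produced in this way (together with the loop directions for length-zero elements) still spans $\mathbb{R}^n$, or establish full dimension by a different route.
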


%%%%%%%%%%%%%%%%%%%%%%%%%%%%%%%%%%%%%%%%%%%%%
 \section{The Schrijver system}%%%%%%%%%%%%%%%%%%
 \label{SchrijverSection}

Let $P=([n], \prec)$ be an interval order and let $Q_P$ be its length polyhedron.
Recall that $Q_P$ is the projection of the conic polyhedron defined by the
linear system (\ref{LRO}), which may be presented this way:
\begin{eqnarray}\label{LRO2}
	\left\{
	\begin{array}{rcll}
		\ell_x+\rho_x-\ell_y&\leq& -1\  & \text{\ if $(x,y)$ is a slack zero cover pair}\ \\
		\ell_x - \ell_y -\rho_y&\leq& 0\  & \text{\ if $(x,y)$ is a sharp pair }\\
		-\rho_x&\leq& 0 & \text{\ if  $(x,x)$ is a slack zero pair}\\
		-\ell_x&\leq& 0 & \text{\ for every $x\in X.$}
	\end{array}
	\right. 
\end{eqnarray}
Write the linear system  (\ref{LRO2}) in matrix form as  $\LocationAndLength \boldsymbol{x} \leq {\boldsymbol{b}}_P$,
where \LocationAndLength is the $\{0,\pm 1\}$-matrix whose rows correspond the left-hand sides of the inequalities,
 %in (\ref{LRO2}),  
 its columns correspond to the $2n$ variables $\ell_1,\ldots,\ell_n,\rho_1,\ldots,\rho_n$ (in this order), and the entries are the coefficients of these variables.
The $0, -1$ entries of the vector ${\boldsymbol{b}}_P$ are determined by the right-hand sides of these inequalities.

\begin{theorem}  \label{LengthPlusLocationIsTUM} For any interval order $P=([n], \prec)$, the matrix \LocationAndLength is totally unimodular.
\end{theorem}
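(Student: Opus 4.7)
My plan is to verify the hypothesis of Ghouila-Houri's criterion (Theorem \ref{Ghouila-Houri}): I will exhibit, for every subset $J$ of the columns of \LocationAndLength, a partition $J=J_1\cup J_2$ such that every row of $M_P$ contributes a signed sum (entries in $J_1$ minus entries in $J_2$) of absolute value at most one. Since this condition is checked row by row, the same bipartition witnesses equitability of every submatrix whose column set is $J$. First I would catalogue the rows: a cover row $(x,y)$ has three nonzero entries $+1,-1,+1$ on columns $\ell_x,\ell_y,\rho_x$; a sharp row $(x,y)$ has $+1,-1,-1$ on $\ell_x,\ell_y,\rho_y$; the $\rho$-nonnegativity rows and $\ell$-nonnegativity rows each have a single $-1$ entry.

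The bicoloring I would use depends on $J$: place each $\ell_x\in J$ in $J_1$, and for each $\rho_x\in J$ place it in $J_2$ whenever its companion $\ell_x$ also lies in $J$, but in $J_1$ when $\ell_x\notin J$. The motivation is that a cover row contains a ``paired'' configuration $+1$ on $\ell_x$ and $+1$ on $\rho_x$ which, together with the $-1$ on $\ell_y$, would otherwise push the signed sum to $\pm 3$; coloring $\ell_x$ and $\rho_x$ oppositely kills this double contribution whenever both columns are in $J$. The analogous remark applies to sharp rows, where the pair $\ell_y,\rho_y$ both carry $-1$ and should be colored oppositely.

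To finish, I would verify equitability row by row. For a cover row $(x,y)$, enumerating the eight cases according to which of $\ell_x,\ell_y,\rho_x$ lie in $J$ gives a signed sum of $-1$ when all three are present (since $\ell_x\in J$ forces $\rho_x$ into $J_2$, cancelling $\ell_x$'s contribution), of $0$ when exactly two of them lie in $J$ (the pair always cancels), and of $\pm 1$ or $0$ in the remaining cases. A symmetric check handles sharp rows, and the nonnegativity rows are trivial because they have a single nonzero entry. The only subtlety worth flagging is that the naive ``all $\ell$'s one color, all $\rho$'s the other'' bipartition fails---the case $\ell_y,\rho_x\in J$ with $\ell_x\notin J$ in a cover row produces signed sum $-2$---so the bicoloring genuinely must depend on $J$, pairing each $\rho_x$ with its companion $\ell_x$ only when both are present. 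Beyond this design choice, the argument is a routine enumeration and I do not anticipate a real obstacle.
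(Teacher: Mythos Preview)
Your argument is correct and is essentially the paper's own proof: both apply Ghouila-Houri's criterion with the same bicoloring rule---put $\rho_x$ in the opposite class from $\ell_x$ precisely when both columns belong to the chosen subset, and otherwise put everything in the same class. The paper phrases the criterion in terms of arbitrary submatrices while you work with column subsets and note that the row restriction is immaterial, and your case enumeration is a bit more explicit, but the content is identical.
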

\begin{proof}
	We apply the characterization of totally unimodular matrices due to Ghouila-Houri \cite{G-H}. % (Theorem \ref{Ghouila-Houri}). 
	Consider an arbitrary submatrix $A$ of \LocationAndLength
	formed by selecting certain rows and columns.  Color all columns of $A$ blue, except in the
	case in which columns corresponding to $\ell_x$ and $\rho_x$ are both selected to form $A$.
	In this exceptional case, color the column corresponding to $\ell_x$ blue and 
	the column corresponding to $\rho_x$ red.  We claim that this is an equitable bicoloring of the columns of $A$.
	To prove this, consider a row of $A$.  If this row corresponds to an inequality involving only one variable,
	then the blue sums and the red sums of the entries differ by at most one.  So, it suffices to consider
	expressions of type $\ell_x+\rho_x-\ell_y$ since the left-hand sides of the inequalities $	\ell_x+\rho_x-\ell_y\leq -1$ and $\ell_x - \ell_y -\rho_y\leq 0$ from (\ref{LRO2}) can be rewritten into this form.  Observe that if $\ell_x$ and $\rho_x$ are not
	both selected to form $A$, then this expression is equivalent to $\ell_x-\ell_y$ or $\rho_x-\ell_y$ and all
	of these terms are blue, hence their blue sum is in $\{0,\pm1\}$ while the red sum is zero.
	Similarly, if $\ell_x$ and $\rho_x$ are 
	both selected to form $A$, then $\ell_x+\rho_x-\ell_y$ has blue sum $0$ or $1$, depending upon whether
	the column corresponding to $\ell_y$ is chosen to form $A$, whereas its red sum is one.
	So $A$ has an equitable bicoloring.  Because $A$ is arbitrary, Theorem \ref{Ghouila-Houri} implies that 
	\LocationAndLength is totally unimodular.
\end{proof}

\begin{corollary} \label{TDI} For any interval order $P=([n], \prec)$ and any integral vector $\boldsymbol{b}$,
	the linear system $\LocationAndLength \boldsymbol{x} \leq \boldsymbol{b}$ is TDI.
\end{corollary}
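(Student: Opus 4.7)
The plan is to derive the corollary as a near-immediate consequence of Theorem \ref{LengthPlusLocationIsTUM} combined with the Hoffman--Kruskal characterization in Theorem \ref{TUM}. Specifically, I would fix an integer vector $\boldsymbol{b}$ together with an integer vector $\boldsymbol{c}$ for which the optimum $\max\{\boldsymbol{c}^{T}\boldsymbol{x} : \LocationAndLength \boldsymbol{x} \leq \boldsymbol{b}\}$ is finite. Because the total unimodularity of \LocationAndLength has just been established in Theorem \ref{LengthPlusLocationIsTUM}, Hoffman--Kruskal guarantees that the LP-dual $\min\{\boldsymbol{b}^{T}\boldsymbol{y} : M_P^{T}\boldsymbol{y} = \boldsymbol{c},\ \boldsymbol{y} \geq \boldsymbol{0}\}$ attains its optimum at an integer vector $\boldsymbol{y}$. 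Since $\boldsymbol{c}$ was arbitrary, this is precisely the statement that $\LocationAndLength \boldsymbol{x} \leq \boldsymbol{b}$ is TDI.

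The only bookkeeping subtlety I anticipate is that Theorem \ref{TUM} is stated with an explicit non-negativity constraint $\boldsymbol{x} \geq \boldsymbol{0}$, whereas the variables $\ell_x, \rho_x$ appearing in $\LocationAndLength \boldsymbol{x} \leq \boldsymbol{b}$ are free. I would dispense with this by the standard substitution $\boldsymbol{x} = \boldsymbol{x}^{+} - \boldsymbol{x}^{-}$ with $\boldsymbol{x}^{+}, \boldsymbol{x}^{-} \geq \boldsymbol{0}$: the augmented matrix $[\,M_P\ \ {-}M_P\,]$ remains totally unimodular (negating columns preserves the absolute values of all subdeterminants), so Hoffman--Kruskal applies directly to the augmented primal, and the equality-form LP-dual of the augmented primal coincides with the dual appearing in the definition of TDI. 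The integer dual solution produced is therefore the one required.

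I do not expect any substantive obstacle; the real work was packaged into the Ghouila--Houri bicoloring argument used to prove Theorem \ref{LengthPlusLocationIsTUM}, so the present corollary is simply the observation, classical since Hoffman--Kruskal, that total unimodularity together with an integer right-hand side yields a TDI system.
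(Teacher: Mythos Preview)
Your proposal is correct and follows essentially the same route as the paper: invoke Theorem \ref{LengthPlusLocationIsTUM} to get total unimodularity of $M_P$, then apply the Hoffman--Kruskal characterization (Theorem \ref{TUM}) together with the definition of TDI. Your handling of the free-variable versus non-negativity discrepancy via the $[\,M_P\ \ {-}M_P\,]$ substitution is a careful detail the paper glosses over, but it does not change the approach.
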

\begin{proof}  We apply the Hoffman and Kruskal characterization \cite{HoffmanKruskal} of totally unimodular matrices stated here in Theorem \ref{TUM} combined with the  definition of total dual integrality.
Then Theorem \ref{LengthPlusLocationIsTUM} implies that the linear system $\LocationAndLength \boldsymbol{x} \leq \boldsymbol{b}$ is TDI.
\end{proof}

To obtain a linear system in terms of the $\rho$-variables equivalent to (\ref{LRO2})  we eliminate the variables 
 $\ell_x$, $x=1,\ldots,n$, from (\ref{LRO2})  by using Fourier-Motzkin elimination (FME). Let $N\boldsymbol{x} \leq \boldsymbol{b}$ be the  linear system containing the $n$ $\rho$-variables only. The feasible solutions of this system define the length polyhedron $Q_P$. 
 By Definition \ref{QPdef}, given for $Q_P$ in terms of the cycle inequalities, the linear system $N\boldsymbol{x} \leq \boldsymbol{b}$ is equivalent to the system (\ref{CYC}) for all directed cycles of the \FaceGraph $G_P$.
 Notice that the  inequalities of the form $-\ell_x\leq 0$  in (\ref{LRO2}) are meaningless for $Q_P$.

 \begin{proposition}
 \label{CYCTDI}
 The linear system $N\boldsymbol{x} \leq \boldsymbol{b}$ of the cycle inequalities in the \FaceGraph of an interval order is TDI. 
 \end{proposition}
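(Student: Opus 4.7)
The plan is to derive the TDI property of $N\boldsymbol{x}\leq \boldsymbol{b}$ by peeling off the $\ell$-variables one at a time from the system $\LocationAndLength \boldsymbol{x}\leq \boldsymbol{b}$, using Cook's theorem (Theorem \ref{CookTDI}) at each stage. The starting system is TDI by Corollary \ref{TDI}, and, by construction, each column of $\LocationAndLength$ corresponding to an $\ell$-variable contains only entries in $\{0,\pm 1\}$. Hence Cook's theorem immediately applies to the first elimination: Fourier-Motzkin elimination of $\ell_1$ produces a TDI system.

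The substantive point is that this procedure can be iterated. To make the induction go through, I would maintain the stronger structural invariant that \emph{every row of the current system contains at most one $\ell$-variable with coefficient $+1$ and at most one with coefficient $-1$}. This holds for $\LocationAndLength \boldsymbol{x}\leq \boldsymbol{b}$ by direct inspection of the four row types in (\ref{LRO2}). For the inductive step, suppose the invariant holds and we eliminate some $\ell_k$. FME combines a row $R_1$ in which $\ell_k$ has coefficient $+1$ with a row $R_2$ in which $\ell_k$ has coefficient $-1$. By the invariant, $R_1$ has no other $\ell$-variable with $+1$ coefficient and at most one $\ell_y$ with $-1$ coefficient; similarly $R_2$ has no other $\ell$-variable with $-1$ coefficient and at most one $\ell_z$ with $+1$ coefficient. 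Consequently $R_1+R_2$ contains at most one $\ell$-variable with $+1$ and at most one with $-1$ (with cancellation when $y=z$), preserving the invariant. In particular, the coefficients of the next variable to be eliminated remain in $\{0,\pm 1\}$, so Cook's theorem applies again.

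Iterating this $n$ times eliminates all the $\ell$-variables while preserving total dual integrality. Because Fourier-Motzkin elimination of the $\ell$-variables from (\ref{LRO2}) yields precisely the cycle inequality system (this is the content of Theorem \ref{QPdef} and was established in \cite{LengthPolyhedronPaper}), the resulting system is exactly $N\boldsymbol{x}\leq \boldsymbol{b}$, which is therefore TDI.

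The only real obstacle is verifying that the hypothesis of Cook's theorem continues to hold after each elimination, since a priori FME might enlarge coefficients. The bicoloring / row-structure observation above resolves this by exploiting the very specific shape of the constraints in (\ref{LRO2}): each original inequality involves at most two $\ell$-variables, one with $+1$ and one with $-1$, and this ``unit transportation'' shape is preserved under pairwise summation across a cancelled variable. Everything else is an immediate invocation of Corollary \ref{TDI} and Theorem \ref{CookTDI}.
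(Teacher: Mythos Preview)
Your proposal is correct and follows essentially the same approach as the paper: start from the TDI system of Corollary \ref{TDI} and iterate Cook's theorem, eliminating the $\ell$-variables one by one. If anything, your explicit invariant (each row carries at most one $\ell$-variable with coefficient $+1$ and at most one with $-1$) is a cleaner justification that the $\{0,\pm1\}$ hypothesis of Theorem \ref{CookTDI} persists at every intermediate stage than the paper's brief appeal to the form of the final cycle inequalities.
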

 \begin{proof}
 Denote by $N_i\boldsymbol{x} \leq \boldsymbol{b_i}$ the linear system equivalent to (\ref{LRO2}) after eliminating $\ell_x$ for $x=1,2,...i$. Then  $N_n\boldsymbol{x} \leq \boldsymbol{b_n}$ is identical with 
 the system $N\boldsymbol{x} \leq \boldsymbol{b}$ of cycle inequalities. 
 
 We apply a result due to Cook \cite{Cook} that is stated here as Theorem \ref{CookTDI}. Because 
each coefficient of the variables in  a cycle inequality (\ref{CYC}) is either $0$, $1$, or $-1$,  the conditions required by Theorem \ref{CookTDI}
 are satisfied by $N_i\boldsymbol{x} \leq \boldsymbol{b_i}$, for every $i=1,\ldots, n$. The FME procedure  starts with a TDI-system (\ref{LRO2}), 
 by Corollary \ref{TDI}. Thus, repeated application of Theorem \ref{CookTDI}  implies  that the system $N\boldsymbol{x} \leq \boldsymbol{b}$ of cycle inequalities is TDI.
\end{proof}

\begin{theorem} \label{UniqueMinimalTDI} The length polyhedron  $Q_P$ of an interval order  $P$ 
has a unique Schrijver system obtained from the system of cycle inequalities in the \FaceGraph by discarding each
cycle inequality that can be expressed as a non-negative integral linear combination of other inequalities (not equivalent to the original inequality).
\end{theorem}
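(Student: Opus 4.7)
The plan is to combine three already-established ingredients---full-dimensionality of $Q_P$, Schrijver's uniqueness theorem, and total dual integrality of the cycle inequality system---and then extract the Schrijver system by a finite sequence of TDI-preserving row deletions. Theorem~\ref{fromlength} supplies that $Q_P$ is a full-dimensional affine cone, so Theorem~\ref{SchrijverUniqueTDI} furnishes a unique minimal TDI system with integral coefficient matrix defining $Q_P$; this is by definition the Schrijver system. Since Proposition~\ref{CYCTDI} exhibits the cycle inequality system $N\boldsymbol{x}\le\boldsymbol{b}$ as one TDI description of $Q_P$, the only remaining task is to reduce it to a minimal TDI subsystem.

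The technical engine is a standard preservation lemma proved by dual substitution: if $A\boldsymbol{x}\le\boldsymbol{b}$ is TDI and some row $(a_1,b_1)$ equals $\sum_{i\ge 2}\lambda_i(a_i,b_i)$ with $\lambda_i\in\mathbb{Z}_{\ge 0}$, then deleting that row yields a TDI system defining the same polyhedron. Indeed, for any integer objective $\boldsymbol{c}$ admitting an integer optimal dual $\boldsymbol{y}$ of the full system, the vector $\boldsymbol{y}'$ with $y_1'=0$ and $y_i'=y_i+\lambda_i y_1$ for $i\ge 2$ is a non-negative integer dual feasible for the reduced LP achieving the same objective value. Iterating this deletion on the finite cycle system---at each step removing a cycle inequality expressible as a non-negative integer combination of other non-equivalent cycle inequalities still present---produces after finitely many removals a TDI subsystem $\mathcal{S}^\ast$ in which no such redundancy remains; by transitivity of substitution, every cycle inequality discarded along the way is a non-negative integer combination of cycle inequalities from the original system.

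To conclude that $\mathcal{S}^\ast$ is the Schrijver system, I would invoke the equivalent characterization of minimality recalled in Section~\ref{PrelimSection}: a TDI system is minimal precisely when each constraint determines a supporting hyperplane and is not a non-negative integer combination of the others. The second condition holds for $\mathcal{S}^\ast$ by construction, and the first is secured by the forthcoming main Theorem~\ref{integerNonNegativeLinearCombinationForInequalities}, which establishes that every polyhedrally redundant cycle inequality is itself a non-negative integer combination of smaller cycle inequalities; so no non-supporting inequality can survive the reduction. Schrijver's uniqueness then identifies $\mathcal{S}^\ast$ with the Schrijver system. The only non-routine step here is this appeal to Theorem~\ref{integerNonNegativeLinearCombinationForInequalities}, whose combinatorial proof via circulations in the \FaceGraph is the main work of the paper; absent that result, the reduction above is only guaranteed to produce a TDI subsystem sandwiched between the Schrijver system and the full cycle system.
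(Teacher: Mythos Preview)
Your argument is correct and follows essentially the same route as the paper's proof, which simply cites Proposition~\ref{CYCTDI} (the cycle system is TDI), Theorem~\ref{fromlength} (full-dimensionality), and Theorem~\ref{SchrijverUniqueTDI}, together with the characterisation of minimal TDI systems recalled in Section~\ref{PrelimSection}. Your explicit TDI-preservation lemma and the iterative deletion make the ``claim now follows'' step more transparent than the paper's one-line version.

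One point deserves correction, however: your appeal to Theorem~\ref{integerNonNegativeLinearCombinationForInequalities} to secure the supporting-hyperplane condition is unnecessary and slightly misdirected. Every cycle inequality is automatically tight at the apex of the affine cone $Q_P$ (the length vector of the canonical representation), because each arc weight in (\ref{FaceGraphWeights}) is by definition an equality at the canonical representation, and the cycle inequality is their sum. Hence condition~(i) in the minimality characterisation holds for \emph{every} cycle inequality, not just those surviving your reduction, and no forward reference to Theorem~\ref{integerNonNegativeLinearCombinationForInequalities} is needed here. The role of that theorem in the paper is different: it is what makes the Schrijver system \emph{computable}, by showing that rational redundancy already implies integral redundancy, so that a single LP feasibility test per inequality suffices.
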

\begin{proof}
	By Proposition \ref{CYCTDI}, the system (\ref{CYC}) of cycle inequalities in the \FaceGraph is TDI.
	  As outlined in Theorem \ref{fromlength}, the length polyhedron $Q_P$ is defined by (\ref{CYC}) and it is a full-dimensional integral affine cone.  The claim now follows by Theorem \ref{SchrijverUniqueTDI}.
	\end{proof}
		Theorem \ref{UniqueMinimalTDI}  combined with our	main result Theorem \ref{WeakOrderSignificance} 
 answers the question how to compute the Schrijver system of the length polyhedron $Q_P$ of an interval order $P$. 
 	In fact, we prove in the second part of the paper  that redundant cycle inequalities are necessarily
	integral linear combinations of smaller cycle inequalities with respect to the weak ordering of the cycle inequalities defined in Section \ref{weak}.

As an example, we provide the Schrijver system for $P=(0, 1, 2, 2, 0, 2, 2, 3)$ (shown in Figure \ref{keygraphexample}).
Table \ref{ExampleTDI} contains the Schrijver system of the length polyhedron $Q_P$ together with the  cycles of the \FaceGraph $G_P$ generating the corresponding inequalities.

%%%%%%%%%%%%%%%%%%%%%%%%%%%%%%%%%%%%%
\section{Paths and cycles in the \FaceGraph}%%%%%%%%%%%%%%%%%%%%
\label{CyclesSection}

This section develops the structure of the \FaceGraph, focusing on the
properties needed in later sections: local structure (the diamond lemma), 
color-alternating directed paths, and circulations.

%%%%%%%%%%%%%%%%%%%%%%%%%%%%%%%%
\subsection{The diamond lemma}%%%%%%%%%%%%%%
 Let $P=([n],\prec)$ be an interval order. 
 Recall that the {\it \FaceGraph}, $G_P$, is a colored, arc-weighted directed graph derived from $P$.
 Two arcs of the {\it \FaceGraph} with the same color and common tail (or with a common head) determine a gap in the canonical representation of $P$, an interval between consecutive interval endpoints.  
 The next lemma, dubbed the {\it diamond lemma}, details how such arcs (and their gaps) force other arcs in the {\it \FaceGraph}.
 This is a critical tool in detecting dependencies among cycle inequalities.

\begin{lemma} \label{diamond}
Let $\rho_w,\rho_x,\rho_y,\rho_z$ be  
distinct vertices of $G_P$.
\[
\begin{array}{lll}   
{\rm (I)}\quad  &\text{If } \rho_w\blue \rho_x, \rho_w\blue \rho_y, \text{ and }\rho_x\red \rho_z \text{ are arcs of }\ G_P,
	 \text{ then}\  \rho_y\red \rho_z \text{ is an arc of}\ G_P,\\
{\rm (II)}\quad  & \text{If } \rho_w\red \rho_x, \rho_w\red \rho_y,  \text{ and } \rho_x\blue \rho_z \text{ are arcs of }\ G_P,
	 \text{ then } \rho_y\blue \rho_z\text{ is an arc of } G_P,\\	
	{\rm (III)}\quad  &\text{If } \rho_w\red \rho_x, \rho_w\red \rho_y, \text{ and } \rho_z\red \rho_x \text{ are arcs of }\ G_P,
	\text{ then}\  \rho_z\red \rho_y\ \text{is an arc of}\ G_P,\\
{\rm (IV)}\quad  &\text{If}\  \rho_w\blue \rho_x, \rho_w\blue \rho_y, \text{ and }\rho_z\blue \rho_x \text{ are arcs of }\ G_P,
	\text{ then}\  \rho_z\blue \rho_y\ \text{is an arc of}\ G_P. \\
\end{array}
\]
\end{lemma}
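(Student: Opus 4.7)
The plan is to translate each arc of $G_P$ into an endpoint equation in the canonical representation $C=[\ell_x,r_x]_{x\in[n]}$ of $P$, then verify each conclusion by short arithmetic using interval validity ($\ell_x\leq r_x$) and the integer-gap convention $u\prec v\iff r_u+1\leq\ell_v$. The dictionary I will use is: a cover arc $\rho_a\Cover\rho_b$ encodes $a\prec b$ with $\ell_b=r_a+1$; a sharp arc $\rho_a\Sharp\rho_b$ encodes $a\|b$ with $r_b=\ell_a$. Two arcs sharing a tail (or a head) therefore pin several endpoints to a common integer, giving precisely the ``gap'' configuration referred to in the discussion preceding the lemma.

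I will work out case (I) as the template. The cover arcs $\rho_w\Cover\rho_x$ and $\rho_w\Cover\rho_y$ give $\ell_x=\ell_y=r_w+1$, while $\rho_x\Sharp\rho_z$ gives $r_z=\ell_x$; combining these yields the endpoint equation $r_z=\ell_y$ required for the desired arc $\rho_y\Sharp\rho_z$. It remains only to show $y\|z$. If $y\prec z$, the gap convention forces $r_y+1\leq\ell_z\leq r_z=\ell_y$, so $r_y<\ell_y$, contradicting interval validity. If $z\prec y$, then $r_z+1\leq\ell_y=r_z$, an immediate contradiction. Hence $y\|z$, and $(y,z)$ is a slack-zero sharp pair, giving the arc.

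The remaining three cases follow the same pattern. For (II), $r_x=r_y=\ell_w$ and $\ell_z=r_x+1$ together give $\ell_z=r_y+1$; by the gap convention this is exactly $y\prec z$, and $(y,z)$ is already a slack-zero cover pair, so no incomparability check is needed. For (III), the three sharp arcs force $\ell_z=r_x=\ell_w=r_y$, hence $r_y=\ell_z$, and both $z\prec y$ and $y\prec z$ are excluded by arguments identical in form to those in (I). For (IV), the three cover arcs give $r_z=r_w$ and $\ell_y=r_z+1$, so $z\prec y$ with $(z,y)$ a slack-zero cover pair.

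The only step that is more than a direct computation is the incomparability verification in (I) and (III), which I expect to be the main (if mild) obstacle. It is handled uniformly by the observation that a sharp coincidence $r_b=\ell_a$ leaves no room for the strict gap a comparable pair would require, so any attempt to place $b\prec a$ or $a\prec b$ collapses against $\ell\leq r$.
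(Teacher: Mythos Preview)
Your proof is correct and follows essentially the same approach as the paper's: translate each arc into an endpoint equation in the canonical representation and chain the equalities. The only difference is that you explicitly verify the incomparability condition needed for the sharp arcs in (I) and (III), which the paper leaves implicit.
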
	

%%%%%%%%%%%%%%

	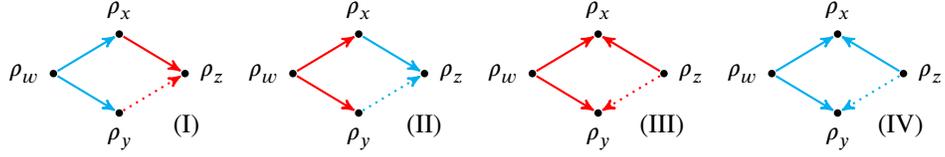
\begin{figure}[H]
	\begin{center}
		\begin{tikzpicture}[scale=.7, ->,>=stealth',auto,node distance=3cm,
				thick,main node/.style={circle,draw,font=\sffamily\small\bfseries}, node/.style={}]
				
					\node[A,label=left:$\rho_w$] at (0,.75) (1) {};
				\node[A,label=above:$\rho_x$] at (1.25,1.5) (2) {};
				\node[A,label=right:$\rho_z$] at (2.5,.75) (3) {};
				\node[A,label=below:$\rho_y$] at (1.25,0) (4) {};
				%\node[A,label=left:$\rho_v$] at (0,2.25) (5) {};
				
				\path[draw, thick, every node/.style={font=\sffamily\small}]
				(1) edge [cyan] node [] {} (2)
				(1) edge [cyan] node [] {} (4)
				%(5) edge [black,double,line width=.05em] node [] {} (2);
				(2) edge  [red]  node [] {} (3)
				(4) edge [red,dotted] node [] {} (3);
				\node[label=left:(I)]()at (3.15,-0.25){};	
			\end{tikzpicture}
				%\hskip.1cm
	\begin{tikzpicture}[scale=.7, ->,>=stealth',auto,node distance=3cm,
				thick,main node/.style={circle,draw,font=\sffamily\small\bfseries}, node/.style={}]
				
					\node[A,label=left:$\rho_w$] at (0,.75) (1) {};
				\node[A,label=above:$\rho_x$] at (1.25,1.5) (2) {};
				\node[A,label=right:$\rho_z$] at (2.5,.75) (3) {};
				\node[A,label=below:$\rho_y$] at (1.25,0) (4) {};
				%\node[A,label=left:$\rho_v$] at (0,2.25) (5) {};
				
				\path[draw, thick, every node/.style={font=\sffamily\small}]
				(1) edge [red] node [] {} (2)
				(1) edge [red] node [] {} (4)
				%(5) edge [black,double,line width=.05em] node [] {} (2);
				(2) edge  [cyan]  node [] {} (3)
				(4) edge [cyan,dotted] node [] {} (3);
				\node[label=left:(II)]()at (3.2,-0.25){};	
			\end{tikzpicture}
				%\vskip.1cm
					\begin{tikzpicture}[scale=.7, ->,>=stealth',auto,node distance=3cm,
				thick,main node/.style={circle,draw,font=\sffamily\small\bfseries}, node/.style={}]
				
					\node[A,label=left:$\rho_w$] at (0,.75) (1) {};
				\node[A,label=above:$\rho_x$] at (1.25,1.5) (2) {};
				\node[A,label=right:$\rho_z$] at (2.5,.75) (3) {};
				\node[A,label=below:$\rho_y$] at (1.25,0) (4) {};
				%\node[A,label=left:$\rho_v$] at (0,2.25) (5) {};
				
				\path[draw, thick, every node/.style={font=\sffamily\small}]
				(1) edge [red] node [] {} (2)
				(1) edge [red] node [] {} (4)
				%(5) edge [black,double,line width=.05em] node [] {} (2);
				(3) edge  [red]  node [] {} (2)
				(3) edge [red,dotted] node [] {} (4);
				\node[label=left:(III)]()at (3.25,-0.25){};	
			\end{tikzpicture}
			%\hskip.1cm				
	\begin{tikzpicture}[scale=.7, ->,>=stealth',auto,node distance=3cm,
				thick,main node/.style={circle,draw,font=\sffamily\small\bfseries}, node/.style={}]
				
					\node[A,label=left:$\rho_w$] at (0,.75) (1) {};
				\node[A,label=above:$\rho_x$] at (1.25,1.5) (2) {};
				\node[A,label=right:$\rho_z$] at (2.5,.75) (3) {};
				\node[A,label=below:$\rho_y$] at (1.25,0) (4) {};
				%\node[A,label=left:$\rho_v$] at (0,2.25) (5) {};
				
				\path[draw, thick, every node/.style={font=\sffamily\small}]
				(1) edge [cyan] node [] {} (2)
				(1) edge [cyan] node [] {} (4)
				%(5) edge [black,double,line width=.05em] node [] {} (2);
				(3) edge  [cyan]  node [] {} (2)
				(3) edge [cyan,dotted] node [] {} (4);	
				\node[label=left:(IV)]()at (3.25,-0.25){};
			\end{tikzpicture}
	\end{center}
	\caption{The Diamond Lemma}
	\end{figure}

\begin{proof} 	
	 For (I) and (IV),  $\rho_w\blue \rho_x$ and $\rho_w\blue \rho_y$ are arcs of $G_P$, hence
	$\ell_x = r_w + 1 = \ell_y$.  
	
	If   $\rho_x\red \rho_z$ is in $G_P$, then $r_z = \ell_x= \ell_y$ which implies $\rho_y\red \rho_z$ is  in $G_P$.
	
	If   $\rho_z\blue \rho_x$ is in $G_P$, then $r_z = \ell_x$ $r_z = \ell_y$ which implies $\rho_z\blue \rho_y$ is in $G_P$.
	
	\begin{figure}[htp]
\begin{center}
\begin{tikzpicture}[scale=.8]
  \foreach \i in {-1,...,0}{
   \draw[line width=.05em](\i,.75)--(\i,2);}  
     \node[label=above:{$\ell_x=\ell_y$}]()at(0,0){};

\node[A](L)at(-1,1.6){}; \node[label=left:w](R)at(-1.8,1.6){};\draw[line width=.05em](L)--(R);
\node[A](L)at(0,1.6){}; \node[label=right:x](R)at(1.1,1.6){};\draw[line width=.05em](L)--(R);
\node[A](L)at(0,1.25){}; \node[label=right:y](R)at(1.1,1.25){};\draw[line width=.05em](L)--(R);
\node[A](L)at(0,.9){}; \node[label=left:z](R)at(-1.5,.9){};\draw[line width=.05em](L)--(R);
\node[label=left:(I)]()at (2,.75){};	
\end{tikzpicture}\hskip2cm
\begin{tikzpicture}[scale=.8]
  \foreach \i in {-1,...,0}{
   \draw[line width=.05em](\i,.75)--(\i,2);}  
     \node[label=above:{$\ell_x=\ell_y$}]()at(0,0){};

\node[A](L)at(-1,1.6){}; \node[label=left:w](R)at(-1.8,1.6){};\draw[line width=.05em](L)--(R);
\node[A](L)at(0,1.6){}; \node[label=right:x](R)at(1.1,1.6){};\draw[line width=.05em](L)--(R);
\node[A](L)at(0,1.25){}; \node[label=right:y](R)at(1.1,1.25){};\draw[line width=.05em](L)--(R);
\node[A](L)at(-1,1.25){}; \node[label=left:z](R)at(-2.1,1.25){};\draw[line width=.05em](L)--(R);
\node[label=left:(IV)]()at (-2.5,.75){};
\end{tikzpicture}

\begin{tikzpicture}[scale=.8]
  \foreach \i in {-1,...,0}{
   \draw[line width=.05em](\i,0.75)--(\i,2);}    
\node[label=above:{$r_x=r_y$}]()at(-1,0){};

\node[A](L)at(-1,1.6){}; \node[label=right:w](R)at(0.6,1.6){};\draw[line width=.05em](L)--(R);
\node[A](L)at(-1,1.25){};\node[label=left:x](R)at(-2.3,1.25){};\draw[line width=.05em](L)--(R);
\node[A](L)at(-1,.9){};\node[label=left:y](R)at(-2.3,.9){};\draw[line width=.05em](L)--(R);
\node[A](L)at(0,1.25){};\node[label=right:z](R)at(.73,1.25){};\draw[line width=.05em](L)--(R);
\node[label=left:(II)]()at (2,.6){};
\end{tikzpicture}\hskip1.8cm
\begin{tikzpicture}[scale=.8]
  \foreach \i in {-1,...,0}{
   \draw[line width=.05em](\i,0.25)--(\i,2);}    
\node[label=above:{$r_x=r_y$}]()at(0,-0.35){};

\node[A](L)at(0,1.6){}; \node[label=right:w](R)at(1.1,1.6){};\draw[line width=.05em](L)--(R);
\node[A](L)at(0,1.25){};\node[label=left:x](R)at(-2.3,1.25){};\draw[line width=.05em](L)--(R);
\node[A](L)at(0,.9){};\node[label=left:y](R)at(-2.3,.9){};\draw[line width=.05em](L)--(R);
\node[A](L)at(0,.55){};\node[label=right:z](R)at(.73,.55){};\draw[line width=.05em](L)--(R);
\node[label=left:(III)]()at (-2.4,.25){};
\end{tikzpicture}
\end{center}
\caption{Diamonds in the canonical representation.}
\end{figure}
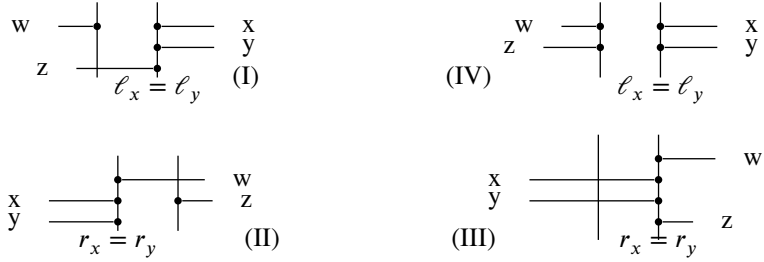
For (II) and (III),   $\rho_w\red \rho_x$ and $\rho_w\red \rho_y$ are arcs of $G_P$, we have
	$r_x = \ell_w = r_y$.  
	
		If   $\rho_x\blue \rho_z$ is in $G_P$, then $\ell_z = r_x+1= r_y+1$ which implies $\rho_y\blue\rho_z$ is  in $G_P$.
	
	If   $\rho_z\red \rho_x$ is in $G_P$, then $\ell_z = r_x=r_y$ which implies $\rho_z\red \rho_y$ is in $G_P$.
\end{proof}

Lemma \ref{diamond} remains true if the vertices of the diamond are not distinct. Actually, the endpoints of any red arc can be identified making the arc a red loop. It is obviously cannot work for blue arcs, since $G_P$ has no blue loops. 
Notice, that according to 
Lemma \ref{diamond},  if  three among the four colored arcs/loops  of a diamond  are known, then the fourth arc/loop is in $G_P$, and its color is determined. 
		 	
\begin{lemma} \label{multipartition} 
If $L_1$ and  $L_2$ are color-alternating directed paths in the key graph from vertex $\rho_a$ to vertex $\rho_b$, and they start with arcs of the same color, then $L_1$ and $L_2$ have the same length and they enter $\rho_b$ with arcs of the same color.
% OLD Statement
%	For $\rho_a,\rho_b\in X$,  let $\{L_1,L_2,\ldots,L_t\}$ be a  family of
%  color alternating directed paths from vertex $\rho_a$ to $\rho_b$. If  every
%   $L_i$, $i=1,\ldots,t$ starts from $\rho_a$ with the same color, then the 
%   paths have the same length, in particular, all arcs in $\bigcup_{i=1}^t L_i$
%   entering vertex $\rho_b$ have the same color. 
\end{lemma}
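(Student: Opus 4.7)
The plan is to track, for each vertex $\rho_x$ along a color-alternating path, a specific endpoint coordinate ($\ell_x$ or $r_x$) of $x$'s interval in the canonical representation of $P$. From (\ref{FaceGraphWeights}), the slack-zero conditions force $\ell_y=r_x+1$ on every blue arc $\rho_x\blue\rho_y$ and $r_y=\ell_x$ on every red arc $\rho_x\red\rho_y$. A color-alternating path beginning with a blue arc out of $\rho_a$ therefore records positions $r_a, r_a+1, r_a+1, r_a+2, r_a+2, \ldots$: after $k$ alternating steps, the $k$-th vertex $\rho_{u_k}$ satisfies $\ell_{u_k}=r_a+(k+1)/2$ if $k$ is odd and $r_{u_k}=r_a+k/2$ if $k$ is even. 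The analogous formulas hold when the path starts with a red arc, with the roles of $\ell$ and $r$ exchanged.

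Given these position formulas, if $L_1$ and $L_2$ both start with arcs of the same color and terminate at $\rho_b$ with last arcs of the \emph{same} color, then either $\ell_b$ or $r_b$ is pinned down twice, and the resulting equality immediately forces $|L_1|=|L_2|$. Hence the ``same length'' conclusion follows from ``same ending color'', so it suffices to establish the latter.

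To show the two ending colors must agree, I would induct on $|L_1|+|L_2|$, using Lemma \ref{diamond} as a rewriting device. If the first arcs of $L_1$ and $L_2$ coincide as arcs of $G_P$, stripping this common arc reduces to sub-paths starting at the same next vertex with the opposite color, whence induction applies. Otherwise the first arcs are distinct arcs of the common color out of $\rho_a$; say both are blue, with $L_1$ beginning $\rho_a\blue\rho_{u_1}\red\rho_{u_2}\cdots$ and $L_2$ beginning $\rho_a\blue\rho_{v_1}\cdots$ with $u_1\neq v_1$. Lemma \ref{diamond}(I) then supplies the arc $\rho_{v_1}\red\rho_{u_2}$, so replacing the prefix $\rho_a\blue\rho_{u_1}\red\rho_{u_2}$ of $L_1$ by $\rho_a\blue\rho_{v_1}\red\rho_{u_2}$ produces a color-alternating path $L_1'$ from $\rho_a$ to $\rho_b$ of the same length and ending color as $L_1$, but now sharing its first arc with $L_2$. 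Stripping the common first arc from $L_1'$ and $L_2$ reduces the total length by $2$ and induction finishes the step. The parallel situations (paths starting with red arcs, handled by Diamond parts (II)--(IV)) are symmetric.

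The main obstacle will be the bookkeeping when coincidences among vertices force the rewritten path $L_1'$ to fail strict simplicity: for example, if $\rho_{v_1}$ already appeared later in $L_1$, or if $u_2=v_1$ makes $\rho_{v_1}\red\rho_{u_2}$ a red loop. I would invoke the extension of the diamond lemma noted after Lemma \ref{diamond} (which remains valid with coincident vertices and produces red loops where needed), and verify that the position formulas above are unaffected because a red loop contributes zero to the net coordinate advance. These degenerate cases are the delicate part of the induction, but the coordinate invariants established at the outset tightly constrain what can occur and keep the length/ending-color accounting consistent throughout the rewriting.
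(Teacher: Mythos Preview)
Your diamond-lemma induction is essentially the paper's: both arguments use Lemma~\ref{diamond} to rewrite the first two arcs of one path so that it shares its first arc with the other, strip that common arc, and recurse (you on $|L_1|+|L_2|$, the paper on $|L_1|$, but the manipulation is identical). The one difference is that you split off ``same length'' and derive it from the endpoint-tracking formulas, whereas the paper simply carries the full statement as the inductive hypothesis; since the stripped sub-paths inherit both conclusions at once, the coordinate bookkeeping is a pleasant extra rather than a necessary ingredient, and your flag about simplicity of the rewritten path is the same delicate point the paper's short proof also elides.
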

\begin{proof}  
The proof is induction on the length $h=|L_1|>2$.  Let
$L_1=(\rho_a,\rho_x,\rho_y,\ldots,\rho_b)$ and $L_2=(\rho_a,\rho_{u},\rho_v,\ldots,\rho_b)$.
% If $h=3$, then the claim follows
By assumption, the arcs $\rho_{u}\rightarrow\rho_v$ and $\rho_{u}\rightarrow\rho_y$ have the same color.  So,
by Lemma 4.1 (I) or (II), the claim follows for $h=3$.

For $h>3$, consider the paths $L_1^\prime=(\rho_u,\rho_v,\ldots,\rho_b)$ and $L_2^\prime=(\rho_u,\rho_y,\ldots,\rho_b)$.
These color-alternating directed paths from $\rho_u$ to $\rho_b$ start with arcs of the same color, as we saw when $h=3$, and $|L^\prime_1|=h-1$.
By induction,  the paths $L^\prime_1$ and $L^\prime_2$, hence  $L_1$ and $L_2$ have the same length, and they enter $\rho_b$ with arcs of the same color.
% OLD PROOF
%By repeated application of  Lemma \ref{diamond} (I) and (II) it follows that 
%the vertex set of $\bigcup_{i=1}^t L_i$  is partitioned into $S_0, S_1,\ldots,S_h$ (with some $h\geq 1$) such that 
%$\rho_a\in S_0$, $\rho_b\in S_h$ and  for every $1\leq j\leq h$ and $\rho_x\in S_j$,  the distance  of $ \rho_x$ from $\rho_a$ along each path containing $\rho_x$ is equal to $j$.
\end{proof}

%%%%%%%%%%%%%%%%%%%%%%%%%%%%%%%
\subsection{Circulations}
\label{CirculationsSubSection}
To identify the Schrijver system for the length polyhedron, we begin with the system of cycle inequalities derived from cycles
of the {\it \FaceGraph}
(which is TDI, by Proposition \ref{CYCTDI})
and iteratively discard inequalities that are a non-negative integral
linear combination of smaller cycle inequalities (`smaller' here refers to the weak order defined
in (\ref{WeakOrderOnInequalities})).  The process of detecting inequalities that must be discarded begins by
NOT discarding all inequalities that correspond to loops of the {\it \FaceGraph}.  These {\em loop inequalities} have the
form $\left\langle 0\leq \rho_x\right\rangle$, for $x \in X$ such that $x$ is assigned an interval of length zero in the 
canonical representation.  These inequalities are clearly not non-negative linear combinations of other cycle inequalities, so
they will form part of the Schrijver system.

By definition, a cycle inequality $W$ is {\em redundant} if it is a non-negative linear combination of other cycle inequalities:
\begin{equation}\label{redundantW}
W=\sum\limits_{i=1}^t\alpha_i W_i,
\end{equation}
where  $W_i$ is a cycle inequality not equivalent to $W$,  and $\alpha_i\geq 0$  is a rational coefficient, for $i=1,\ldots,t$.
Our goal is to prove that a redundant cycle inequality is a non-negative {\em integral} 
linear combination of smaller cycle inequalities (Theorem \ref{integerNonNegativeLinearCombinationForInequalities}).
Thus, to decide whether a cycle inequality should be discarded, it suffices to verify the feasibility of a linear program
(which can be done in polynomial-time provided the number of smaller inequalities is polynomially bounded).
In order to prove Theorem \ref{integerNonNegativeLinearCombinationForInequalities}, 
we employ circulations of the \FaceGraph, which we now introduce.

Given a directed graph $D=(V,A)$ and a vertex $v \in V$,  the set of arcs {\em entering} $v$ is denoted by $\delta^-(v) = \{a \in A :  a = (u,v) \mbox{ for some } u \in V\}$, and  the set of arcs {\em exiting} $v$ is denoted by 
 $\delta^+(v) = \{a \in A : a = (v,u) \mbox{ for some } u \in V\}$.
A {\em circulation} on $D$ is a function $f : A \rightarrow \mathbb{R}$ such that $f(a) \geq 0$, for all $a \in A$, and
$\sum\limits_{a \in \delta^-(v)} f(a) = \sum\limits_{a \in \delta^+(v)} f(a)$, for all $v \in V$.

Observe that if $f$ is a circulation on digraph $D$, then $cf$, the scaling of all values of $f$ by a positive constant $c$, is also a circulation.
The value $f(a)$ is called the {\em flow} along the arc $a$.
A circulation $f$ is {\em an integral circulation} if $f(a)$ is integer, for all arcs $a$. 
The {\em zero circulation} is the circulation that assigns value zero to all arcs. 
The {\em support} of a circulation $f$, denoted $supp(f)$, is the subdigraph  
 induced by the set of all arcs with non-zero flow, where it is understood that
isolated vertices are eliminated if they appear.

The
support of a circulation defined on the  \FaceGraph $G_P=(V,A)$ is a subdigraph inheriting all of the arc colors and arc weights.  If $f$ is a circulation of the \FaceGraph, define its weight to be
$$W(f) = \sum_{a \in A} f(a)w(a).$$

It is understood here that the arc weight inequalities can be added in the usual way with appropriate cancellations and simplifications in the sense that  
$\left\langle a \leq b\right\rangle$ plus $\left\langle c \leq d\right\rangle $ sums to $\left\langle a + c \leq b + d\right\rangle $.
A {\em zero weight circulation} is one whose weight reduces to $\left\langle 0 \leq 0\right\rangle $.  
Two circulations are {\em equivalent} if they have the same weight.

If $C$ is a cycle of $G_P$, we identify it with the circulation assigning value $1$ to all arcs of $C$
and zero to all other arcs of $G_P$; this is a {\em cycle circulation}.
The cycle inequality for the length polyhedron $Q_P$ determined (via Fourier-Motzkin elimination) by cycle $C$ is precisely the weight of this cycle circulation, which we denote $W(C)$.
Vertices of $supp(f)$ that correspond to variables in $W(f)$ are {\em basic}; other vertices are {\em non-basic}.

It is well known (see, for example, p.135 of \cite{ConfortiCornuejolsZambelli}) that every  (integral) circulation 
is a non-negative (integral) linear combination of cycle circulations.  Here we quickly review this straightforward procedure that we call {\it rational decomposition}.

\begin{proposition}
\label{real_cycle_decomp}
If a circulation $f$ is not a single cycle, then it has a decomposition of the form $f = \sum_{i=1}^t \alpha_i C_i,$ where $\alpha_i>0$,  $C_i $ is a cycle in $supp(f)$, for $i=1,\ldots,t$, and $t\geq 2$. 
\end{proposition}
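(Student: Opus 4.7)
The plan is to prove this by induction on the number of arcs in $supp(f)$, using the standard ``peeling'' argument: repeatedly extract a cycle with a positive coefficient from the support until nothing remains.

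First I would show that whenever $f$ is a non-zero circulation, the subgraph $supp(f)$ contains a directed cycle. This is where the flow-conservation condition does the work: if $v$ is a vertex of $supp(f)$, then some incoming arc at $v$ carries positive flow, so $\sum_{a \in \delta^-(v)} f(a) > 0$, hence $\sum_{a \in \delta^+(v)} f(a) > 0$, which means $v$ has at least one outgoing arc in $supp(f)$. Starting from any vertex and following arcs in $supp(f)$, the sequence must eventually repeat a vertex because the vertex set is finite, producing a directed cycle $C \subseteq supp(f)$.

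Next, I would perform the peeling step. Set $\alpha = \min\{f(a) : a \in C\}$, which is positive by definition of $supp(f)$. Define $f' = f - \alpha C$, treating $C$ as the cycle circulation that assigns value $1$ to each of its arcs and $0$ elsewhere. Then $f'$ is non-negative on every arc, still satisfies flow conservation at every vertex (since subtracting $\alpha$ times an indicator of a cycle preserves the balance condition), and has $|supp(f')| < |supp(f)|$ because the minimum was attained on at least one arc of $C$, which now carries flow zero. So $f'$ is a circulation with strictly smaller support.

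Now I would invoke induction on $|supp(f)|$. If $f' = 0$, then $f = \alpha C$ is a single cycle with a positive coefficient; otherwise, by the inductive hypothesis, $f' = \sum_{j} \beta_j C_j$ with $\beta_j > 0$ and each $C_j$ a cycle in $supp(f') \subseteq supp(f)$, giving $f = \alpha C + \sum_j \beta_j C_j$. Finally, the hypothesis that $f$ is not itself a single cycle forces $t \geq 2$: if the peeling produced only one term $\alpha C$, then $f = \alpha C$, and since any cycle circulation in our setting assigns value $1$ to its arcs (and we always have the freedom to rescale, but the statement allows any $\alpha_i > 0$), the circulation $f$ would be a positive multiple of a single cycle — precisely the excluded case. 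Thus the peeling must have been applied at least twice, yielding $t \geq 2$ terms, which completes the proof. I do not anticipate a substantial obstacle; the only subtlety is verifying the $t \geq 2$ conclusion from the hypothesis that $f$ is ``not a single cycle'' in whatever sense the authors use it (either literally a cycle circulation or a positive scalar multiple of one), and either reading is handled by the peeling argument.
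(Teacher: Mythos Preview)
Your proposal is correct and follows essentially the same peeling/reduction argument as the paper: find a cycle $C$ in $supp(f)$, subtract $\min_{a\in C} f(a)$ times its indicator, and iterate on the smaller support. You are in fact slightly more careful than the paper in explicitly arguing, via flow conservation, that $supp(f)$ must contain a directed cycle (the paper simply assumes one), and your closing remark about the interpretation of ``not a single cycle'' correctly identifies the only point where the paper is informal.
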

\begin{proof}
 Consider a circulation $f$ and a directed cycle $C$ in $supp(f)$.  Define $$f_{\min}(C) := \min\{f(a): a \mbox{ is an arc of } C\}.$$
The {reduction of $f$ by $C$}, denoted $f_C$, is the circulation defined as
$$ 
f_C(a) := \left\{
\begin{array}{ll}
	f(a) & \mbox{ if $a$ is not in $C$ } \\
	f(a) - f_{\min}(C) & \mbox{ if $a$ is in $C$. }
\end{array} \right. 
$$
Circulation $f_C$ is the result of reducing $f$ by $C$.
Note that $W(f_C) = W(f) - f_{\min}(C) W(C)$.  Also note that $supp(f_C)$ has fewer arcs than $supp(f)$ since at least
one arc of $C$ is assigned zero flow by $f_C$.   

Now $f_C$ could be further reduced using any cycle in $supp(f_C)$.
In this way, repeated reductions decompose the original circulation $f$ into a finite, non-negative linear combination of cycle circulations.
Indeed, this shows that there exists a positive integer $t$, cycles $C_1,\ldots,C_t$, and positive scalars $\alpha_1,\ldots,\alpha_t$ such that $f$ takes the form
\begin{eqnarray} \label{linearCombo}
f = \sum_{i=1}^t \alpha_i C_i,
\end{eqnarray}
called a {decomposition} of $f$,
where $\alpha_i$ is equal to the minimum flow among arcs of a cycle $C_i$ in the support of the circulation that results from
reducing $f$ by $C_1,\ldots,C_{i-1}$.
\end{proof}

Observe that if $f$ is an integral circulation in Proposition \ref{real_cycle_decomp}, then all $\alpha_i$'s are integral; in this case,
$f$ decomposes into cycles, that is, $supp(f)$ is a non-negative integral linear combination of cycle circulations. The rational decomposition (\ref{linearCombo}) of a given circulation $f$ is not unique;  indeed, there may be many choices at each reduction step to choose a cycle
to perform the reduction.  
 Each rational decomposition of a circulation $f$ 
 corresponds to a non-negative linear combination of cycle inequalities:
\begin{eqnarray} \label{InequalityLinearCombo}
	W(f) = \sum_{i=1}^t \alpha_i W(C_i).
\end{eqnarray} 
Therefore, inequality $W(f)$  is valid for the length polyhedron $Q_P$ as
the non-negative sum of cycle inequalities defining $Q_P$.

Notice that each cycle inequality in (\ref{CYC}) contains a variable on the right-hand side, therefore, 
the only circulation that has the zero-weight inequality $\left\langle 0\leq 0 \right\rangle$ is the zero circulation, that is, a circulation having zero flow on each arc.  The total dual integrality derived in Corollary \ref{TDI} has an important consequence on the decomposition of a circulation 
into cycles.

\begin{proposition} \label{Circulation2LinearCombinationOfCycleInequalities}
   Suppose that $P$ is an interval order and $f$ is a circulation of its \FaceGraph.
   If the weight of $f$ is a cycle inequality, then $W(f)$ is a non-negative integer linear combination of cycle inequalities determined by cycles from $supp(f)$.
\end{proposition}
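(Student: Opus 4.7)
The plan is to apply total dual integrality to a carefully restricted version of the $(\boldsymbol{\ell},\boldsymbol{\rho})$-system (\ref{LRO2}) and then reinterpret the resulting integer dual optimum as an integer circulation on $supp(f)$ whose weight equals $W(f)$. Once such an integer circulation is in hand, the remark following Proposition \ref{real_cycle_decomp}, namely that every integer circulation decomposes with positive integer multiplicities into cycles from its own support, immediately yields the required non-negative integer expression for $W(f)$.

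Concretely, I would let $A_f$ be the submatrix of \LocationAndLength consisting of the rows corresponding to the arcs of $G_P$ that lie in $supp(f)$ together with all non-negativity rows $-\ell_x \leq 0$, with matching right-hand side $\boldsymbol{b}_f$. Since \LocationAndLength is totally unimodular (Theorem \ref{LengthPlusLocationIsTUM}), so is its row-submatrix $A_f$, and Theorem \ref{TUM} implies $A_f\boldsymbol{x} \leq \boldsymbol{b}_f$ is TDI. Write $W(f)$ in standard form $\boldsymbol{c}^T\boldsymbol{x} \leq -\gamma$, where $\boldsymbol{c}$ is supported on the $\rho$-variables with entries in $\{-1,0,+1\}$. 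Because $W(f) = \sum_a f(a)w(a)$ is itself a non-negative combination of the arc-weight rows of $A_f$, the inequality $\boldsymbol{c}^T\boldsymbol{x} \leq -\gamma$ is valid on the restricted polyhedron; moreover the canonical representation $(\boldsymbol{\ell}^*,\boldsymbol{\rho}^*)$ is feasible there and, since arcs of $G_P$ correspond to slack-zero pairs, it makes every arc-weight inequality tight. Hence it attains the primal maximum $\boldsymbol{c}^T\boldsymbol{x} = -\gamma$.

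By TDI, there exists an integer vector $\boldsymbol{y}^*\geq \boldsymbol{0}$ with $A_f^T\boldsymbol{y}^* = \boldsymbol{c}$ and $\boldsymbol{b}_f^T\boldsymbol{y}^* = -\gamma$. Split $\boldsymbol{y}^*$ into its arc-indexed part $\boldsymbol{y}^{\text{arc}}$ (automatically supported in $supp(f)$) and a vertex-indexed part $(y_v^{\text{nneg}})_v$ coming from the rows $-\ell_x \leq 0$. The dual equation for column $\ell_v$ reads (outgoing $y^{\text{arc}}$-flow at $v$) minus (incoming $y^{\text{arc}}$-flow at $v$) equals $y_v^{\text{nneg}}$; summing over $v$ makes the left-hand sides telescope to $0$, while the right-hand side is a sum of non-negative integers, forcing every $y_v^{\text{nneg}}=0$. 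Thus $\boldsymbol{y}^{\text{arc}}$ is a non-negative integer circulation in $supp(f)$, and the remaining dual equations together with $\boldsymbol{b}_f^T\boldsymbol{y}^* = -\gamma$ say precisely that $W(\boldsymbol{y}^{\text{arc}}) = W(f)$. Applying the integer case of Proposition \ref{real_cycle_decomp} then gives $\boldsymbol{y}^{\text{arc}} = \sum_j n_j D_j$ with $n_j \in \mathbb{Z}_{>0}$ and each $D_j$ a cycle of $supp(f)$, and taking weights yields $W(f) = \sum_j n_j W(D_j)$ as required.

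The step I expect to be the main obstacle is exactly the argument $y_v^{\text{nneg}}=0$: it is what promotes the ``flow with sources'' structure forced by dual feasibility into an honest circulation on $supp(f)$, without which the decomposition would involve open-path weights rather than cycle inequalities. The sign constraint $\boldsymbol{y}^*\geq\boldsymbol{0}$ coupled with the telescoping identity across the $\ell_v$-equations is what closes this gap, and also explains why one must restrict the dual LP to arcs in $supp(f)$ rather than appeal to TDI of the global cycle system.
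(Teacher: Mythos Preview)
Your argument is correct and is essentially the paper's proof: restrict the system $(\ref{LRO2})$ to the rows indexed by arcs in $supp(f)$, invoke TDI (via Corollary~\ref{TDI}) to obtain an integer dual optimum, interpret it as an integer circulation $g$ on $supp(f)$ with $W(g)=W(f)$, and then decompose $g$ integrally into cycles.  The one difference is that the paper simply \emph{omits} the rows $-\ell_x\leq 0$ from $M_f$, so the dual equation for each column $\ell_v$ already reads $\text{out}(v)-\text{in}(v)=0$ and the circulation property is automatic; your inclusion of those rows followed by the telescoping argument to force $y_v^{\text{nneg}}=0$ is a harmless detour that recovers exactly the same conclusion.
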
  
\begin{proof}  Cycle inequalities have the form shown in (\ref{CYC}).  
	Let $\boldsymbol{z} \boldsymbol{\rho} \leq -\gamma$ be the cycle inequality $W(f)$ defined by the circulation $f$, where
	$\gamma$ is a non-negative integer, $\boldsymbol{z}$ is a $\{0,\pm 1\}$-vector of dimension $n$, and  $\boldsymbol{\rho} = (\rho_1,\ldots,\rho_n)^T$.
	Recast this inequality for variables $\boldsymbol{x} = (\ell_1,\ldots,\ell_n,\rho_1,\ldots,\rho_n)^T$ writing it as $\boldsymbol{c} \boldsymbol{x} \leq -\gamma$,
	where it is understood that $\boldsymbol{c} = (0,\ldots,0,z_1,\ldots,z_n)$ is $2n$-dimensional after padding by $n$ leading zeros.
	
	Consider $\LocationAndLength \boldsymbol{x} \leq {\boldsymbol{b}}_P$ defined by (\ref{LRO2}), and let $M_f \boldsymbol{x} \leq {\boldsymbol{b}}_f$ be the subsystem 
		 that is determined 
	by picking out the rows of $\LocationAndLength$  
	and ${\boldsymbol{b}}_P$ corresponding to arcs of the \FaceGraph in  $supp(f)$.
	Corollary \ref{TDI} implies that the system $M_f \boldsymbol{x} \leq {\boldsymbol{b}}_f$ is TDI.
	
	The linear programming duality equation
	$$\max\{\boldsymbol{c}\boldsymbol{x} : M_f\boldsymbol{x} \leq \boldsymbol{b}_f \} = \min\{\boldsymbol{y}\boldsymbol{b}_f  : \boldsymbol{y}\geq \boldsymbol{0}, \boldsymbol{y}M_f  = \boldsymbol{c}\}$$
	has value $-\gamma$, by assumption.  Indeed, the circulation $f$ witnesses that the maximum and minimum have solutions yielding $-\gamma$.  The total dual integrality guarantees that there exists a non-negative integral optimal solution $y$ to the minimum.  This corresponds to an integral circulation of $supp(f)$, call it $g$, such that $W(g)=W(f)$.
	Because every integral circulation 
	is a non-negative integral linear combination of cycle circulations, the theorem follows.
\end{proof}

The  integral decomposition obtained via  
Proposition \ref{Circulation2LinearCombinationOfCycleInequalities} might seem to establish our main goal: that
any redundant cycle inequality is a non-negative integral linear combination of other cycle inequalities.
However, this is not entirely accurate, as $supp(f)$ may include cycles that themselves realize the cycle inequality $W(f)$.  In such cases, 
Proposition \ref{Circulation2LinearCombinationOfCycleInequalities} would merely show that a cycle inequality can be expressed 
as an integral linear combination of itself - a triviality.  To overcome this impasse, we need to remove these problematic cycles by appropriately reducing the circulation $f$, a task achieved 
by Proposition \ref{sublimation_step} within the Sublimation Loop.  We tackle this task in the next section.  

\section{Computing the Schrijver system}
\label{mainproof}
Because cycle inequalities correspond to supporting hyperplanes of the length polyhedron (they are tight for the canonical representation) and 
the system of cycle inequalities is TDI (Proposition \ref{CYCTDI}),
the Schrijver system consists of cycle inequalities that are
not the non-negative integral combination of other cycle inequalities.

The algorithm to construct the Schrijver system begins by listing all cycle inequalities (in order according to the weak order defined in Section \ref{weak}); this list is referred as to the {\it Weak-list}.
From this Weak-list, the appropriate inequalities are identified and incorporated 
into another list, called the {\it Schrijver list}, which eventually contains the minimal TDI system for the length polyhedron. 

The Schrijver list is initialized with the loop inequalities as mentioned at the beginning of subsection \ref{CirculationsSubSection}.  
We show that the  Schrijver list, initialized with these inequalities and then extended with 
all cycle inequalities that are not expressible as non-negative rational linear combinations of smaller cycle inequalities (in the weak order), 
constitutes the Schrijver system of the length polyhedron $Q_P$. According to Theorem \ref{WeakOrderSignificance}, these are precisely the inequalities that should be included.

When the next inequality $W$ of the Weak list is investigated, we may find that it is not a rational, non-negative linear combination of other cycle inequalities.
In this case, $W$ is clearly not a non-negative integral combination of other cycle inequalities, so it is added to the Schrijver list.
On the other hand, if $W$ is a rational, non-negative linear combination of other inequalities, the main theorem of this section (Theorem \ref{integerNonNegativeLinearCombinationForInequalities}) 
implies that $W$ is necessarily
a non-negative integral combination of other cycle inequalities; therefore, it is not added to the Shrijver list, in this case.  
This is the main theorem of this section:
\begin{theorem} \label{integerNonNegativeLinearCombinationForInequalities}
	Suppose that $P$ is an interval order. 
	Each redundant cycle inequality of its length polyhedron can be expressed 
	as a non-negative integral linear combination of other cycle inequalities.
\end{theorem}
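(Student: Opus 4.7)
The plan is to realize a redundant cycle inequality $W$ as the weight of a suitably chosen circulation on the \FaceGraph, and then invoke the TDI-based integrality result Proposition \ref{Circulation2LinearCombinationOfCycleInequalities}. Since $W$ is redundant there exist positive rationals $\alpha_1,\dots,\alpha_t$ and cycle inequalities $W_1,\dots,W_t$, none equivalent to $W$, with $W=\sum_i\alpha_iW_i$. Each $W_i$ is realized as $W(C_i)$ for some directed cycle $C_i$ of $G_P$, so the rational circulation $f_0:=\sum_i\alpha_iC_i$ satisfies $W(f_0)=W$. Clearing denominators if convenient, we may think of $f_0$ as the starting point for the algorithm.

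The naive move is to apply Proposition \ref{Circulation2LinearCombinationOfCycleInequalities} directly to $f_0$: total dual integrality (Corollary \ref{TDI}) yields an integral circulation $g$ supported in $supp(f_0)$ with $W(g)=W$, and any decomposition of $g$ into cycle circulations expresses $W$ as a non-negative integral combination of cycle inequalities coming from cycles in $supp(f_0)$. The issue, already flagged in the excerpt, is that $supp(f_0)$ may contain additional directed cycles $C^*$ (not among the $C_i$) whose weight $W(C^*)$ is equivalent to $W$; such a $C^*$ could appear in the integral decomposition of $g$ and render the conclusion trivial.

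The heart of the proof, and the obstacle I expect to be hardest, is therefore to massage $f_0$ into a circulation $f$ with $W(f)=W$ whose support contains \emph{no} cycle equivalent to $W$. This is what the Sublimation Loop (Proposition~\ref{sublimation_step}) is designed to do. My plan for this step is as follows. Assume $supp(f)$ contains a directed cycle $C^*$ with $W(C^*)$ equivalent to $W$. Using the weak order on cycle inequalities from Section~\ref{weak}, choose such a $C^*$ minimally or in a controlled way. Then exploit the rigid local structure of $G_P$ given by the Diamond Lemma (Lemma~\ref{diamond}) and the rigidity of color-alternating paths (Lemma~\ref{multipartition}): at any vertex of $C^*$ where the incoming or outgoing arc of $C^*$ shares a monochromatic partner with an arc carried by the rest of the circulation, the diamond operation produces a rerouting that replaces a subpath of $C^*$ by an alternative subpath built from arcs already present in $supp(f)$. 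Performing this rerouting amounts to subtracting a small multiple of $C^*$ and adding the same multiple of a different circulation with the same weight. The result is a new circulation $f'$ with $W(f')=W$ in which either $C^*$ has been removed from the support or the offending cycles strictly decrease in a well-chosen well-founded order, so the loop terminates.

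Once termination is guaranteed, let $f$ be the final circulation: $W(f)=W$ and every directed cycle of $supp(f)$ has weight strictly different from $W$ (and not equivalent to it). Now Proposition~\ref{Circulation2LinearCombinationOfCycleInequalities} applies to $f$: TDI produces an integral circulation $g$ supported in $supp(f)$ with $W(g)=W$, and the standard integral decomposition of $g$ into cycle circulations (the integral version of Proposition~\ref{real_cycle_decomp}) rewrites $W$ as a non-negative integral linear combination of the cycle inequalities $W(C)$ for cycles $C\subseteq supp(f)$. By construction none of these $W(C)$ is equivalent to $W$, completing the proof of Theorem~\ref{integerNonNegativeLinearCombinationForInequalities}. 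The main technical risk is in the design and termination argument for the Sublimation Loop; proving that a single sublimation step strictly decreases some measure (for instance the number of $W$-equivalent cycles weighted by their minimum flow, or a lex-minimal progress measure on the weak order) while preserving both non-negativity of flows and total weight will be where the color/weight bookkeeping from the Diamond Lemma is genuinely needed.
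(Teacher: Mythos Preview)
Your overall strategy matches the paper's: realize $W$ as the weight of a circulation, then invoke Proposition~\ref{Circulation2LinearCombinationOfCycleInequalities}, with the real work being to avoid concordant cycles in the resulting integral decomposition. However, your description of the Sublimation Loop conflates two mechanisms the paper keeps separate, and the architecture you propose has a genuine gap.

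First, the paper's sublimation step (Proposition~\ref{sublimation_step}) is far simpler than your Diamond-Lemma rerouting: whenever $supp(f)$ contains both a discordant cycle $D$ and a concordant cycle $C$, one writes a rational decomposition $f=\beta D+\alpha C+\cdots$, subtracts $\alpha C$, and rescales by $\tfrac{1}{1-\alpha}$. No Diamond Lemma is used here, and termination is immediate because the arc count strictly drops.

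The gap is in your endgame. You aim to reach a circulation whose support contains \emph{no} concordant cycle, then apply Proposition~\ref{Circulation2LinearCombinationOfCycleInequalities}. The paper shows this is not always attainable by sublimation: one can stall at a circulation $g=\tfrac12 C_1+\tfrac12 C_2$ with $C_1,C_2$ concordant and $supp(g)$ still containing a discordant cycle (Corollary~\ref{2cycles}); Figure~\ref{figEx} gives an explicit instance where removing either $C_i$ destroys every discordant cycle in the support. At that point the paper does \emph{not} try to clear concordant cycles from the support. Instead, a separate $2$-cycle Loop (Proposition~\ref{Case_d}) uses the Diamond Lemma at a vertex where $C_1$ and $C_2$ diverge to either split one concordant cycle directly into two discordant cycles (this yields the desired integral combination without another appeal to Proposition~\ref{Circulation2LinearCombinationOfCycleInequalities}), or to shrink the circulation by rerouting through an arc that need \emph{not} lie in $supp(g)$. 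So the Diamond Lemma enters not to purge concordant cycles from the support as you suggest, but to manufacture the integral decomposition directly once sublimation has reduced the problem to the two-concordant-cycle configuration; and your assumption that the rerouting stays within $supp(f)$ is in general false.
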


To prove Theorem \ref{integerNonNegativeLinearCombinationForInequalities}, we begin with
an inequality that is a rational, non-negative rational linear combination of other cycle inequalities and we must 
find an equivalent non-negative integral combination of other cycle inequalities.
The algorithm to find this integral combination works with circulations of the \FaceGraph
and involves two loops.  The flow chart of the first loop that we call the Sublimation Loop is shown in Figure \ref{SubLoop}; 
the second one is the $2$-cycle Loop whose flow chart is presented in Figure \ref{2-cycle_Loop}.
The loops of the algorithm use basic properties of the key graph we introduce in the next subsection.
The proof of Theorem 5.1 begins in subsection \ref{SublimationLoopSubSection}.

%%%%%%%%%%%%%%%%%%%%%%%%%%%%%%%%%%%%%%%%%
\subsection{Concordant cycles and their properties} %%%%%%%%%%%%
The structure of a \FaceGraph implies remarkable properties of its directed cycles.  In this subsection
we develop the notion of concordant cycles and their properties.
Let $f$ be a circulation of a \FaceGraph.
A cycle $C$ in $supp(f)$ is {\em concordant} (with $f$) if  $C$, interpreted itself as a cycle circulation, and the circulation $f$ are equivalent, that is, $W(C) = W(f)$. 
If $W(C)\neq W(f)$ then $C$ is   {\em discordant}. 
A vertex of $supp(f)$ is {\em mixed} if either all arcs entering it do not all have the same color 
or all arcs leaving the vertex do not have the same color. 

\begin{lemma}\label{no_mixed} Suppose $f$ is a circulation of a \FaceGraph.  If
	$C_1, C_2$ are concordant cycles in $supp(f)$,
	then $C_1\cup C_2$ does not contain a mixed vertex.
\end{lemma}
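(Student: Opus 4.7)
The plan is to argue by contradiction. Suppose $v$ is a mixed vertex of $C_1 \cup C_2$. Since a vertex of in-degree or out-degree at most one in the union cannot be mixed, $v$ must lie on both cycles, with in-arcs $e_1^-, e_2^-$ and out-arcs $e_1^+, e_2^+$ of colors $c_{1,-}, c_{2,-}, c_{1,+}, c_{2,+}$, and either the two in-arcs or the two out-arcs carry different colors. Concordance gives $W(C_1) = W(C_2)$ as cycle inequalities, which forces every $\rho_y$ coefficient to match. The coefficient of $\rho_v$ in $W(C_i)$ is $-1$ when both arcs of $C_i$ at $v$ are blue (so $v \in A_i$), $+1$ when both are red (so $v \in B_i$), and $0$ otherwise. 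A short case analysis over the four arc colors at $v$ shows that the only mixed configuration consistent with matching $\rho_v$-coefficients is the \emph{crossing} one, in which $(c_{1,-}, c_{1,+}) = (\text{blue}, \text{red})$ and $(c_{2,-}, c_{2,+}) = (\text{red}, \text{blue})$ (up to swapping $C_1$ and $C_2$).

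Next I would rule out the crossing configuration. Concordance also forces $A_1 = A_2$ and $B_1 = B_2$, so every vertex on $C_1$ but not on $C_2$ has coefficient $0$ in both cycle inequalities, meaning $C_1$ is color-alternating at every vertex unique to it, and symmetrically for $C_2$. Thus the portion of $C_1$ rooted at $v$ leaves red to some vertex $b$, alternates colors through its unique vertices, and returns blue from some vertex $a$; while the portion of $C_2$ rooted at $v$ leaves blue to some $d$, alternates, and returns red from some $c$.

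Now I would perform a \emph{swap} at $v$: re-pair the arcs so that $e_1^-$ is paired with $e_2^+$ (producing a blue--blue transit through $v$) and $e_2^-$ is paired with $e_1^+$ (producing a red--red transit). Viewed on the Eulerian multidigraph $C_1 \cup C_2$, this yields a new decomposition (possibly after splitting at additional shared vertices) whose total circulation weight still equals $W(C_1) + W(C_2) = 2W(f)$, but whose individual pieces now carry nonzero $\rho_v$-coefficients of opposite sign $\pm 1$. Since $W(f)$ itself has $\rho_v$-coefficient $0$, at least one piece has weight strictly different from $W(f)$. Combined with Proposition~\ref{Circulation2LinearCombinationOfCycleInequalities} applied to the integral circulation $C_1 + C_2$, the existence of a cycle in $supp(f)$ whose weight differs from $W(f)$ in a coordinate where $W(C_1)$ and $W(C_2)$ agree should be incompatible with the assumed concordance of $C_1$ and $C_2$.

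The main obstacle is this last step: arranging the swap to produce genuine simple cycles in $supp(f)$ and then extracting a concrete contradiction from the opposing $\rho_v$-contributions. When $C_1 \cap C_2 = \{v\}$ the naive swap at $v$ yields a single closed walk rather than two simple cycles, so the argument must either locate additional shared structure, or invoke the Diamond Lemma (Lemma~\ref{diamond}) directly on the arcs $e_1^\pm, e_2^\pm$ and the color-alternating remainders of $C_1$ and $C_2$ to force an additional arc of $G_P$. Pinpointing the precise diamond configuration that produces the impossible extra arc, and translating it into a cycle of $supp(f)$ whose weight breaks concordance, is the delicate technical core of the proof.
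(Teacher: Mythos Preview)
Your first two steps are correct and match the paper: a mixed vertex must lie on both cycles, and concordance forces the ``crossing'' configuration in which one cycle enters blue and leaves red while the other enters red and leaves blue, so the vertex is non-basic. From there, however, your swap argument does not close. The sentence ``the existence of a cycle in $supp(f)$ whose weight differs from $W(f)$ \ldots\ should be incompatible with the assumed concordance of $C_1$ and $C_2$'' is simply false: concordance of $C_1,C_2$ only asserts $W(C_1)=W(C_2)=W(f)$ and says nothing about other cycles in $supp(f)$. Discordant cycles are perfectly allowed there (indeed, they are exactly what the later algorithm is looking for), so producing one via a swap at $v$ yields no contradiction. The invocation of Proposition~\ref{Circulation2LinearCombinationOfCycleInequalities} does not help either, since an integral decomposition of $C_1+C_2$ into cycles of $supp(f)$ can (and typically will) just return $C_1+C_2$ itself.

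The missing idea is the one you gesture at but do not carry out: exploit the Diamond Lemma through the color-alternating structure. The paper's argument is short once the crossing configuration is in hand. Choose a pair $(\rho_b,\rho_w)$ with $\rho_b$ basic and $\rho_w$ mixed, minimizing the length of a directed $\rho_b$--$\rho_w$ path in $C_1$ or $C_2$; let $L_i\subset C_i$ be the $\rho_b$--$\rho_w$ path. Minimality forces every internal vertex of $L_1,L_2$ to be non-basic and non-mixed, hence each $L_i$ is color-alternating; moreover $L_1,L_2$ leave $\rho_b$ with the same color because $\rho_b$ is basic (so not mixed). Lemma~\ref{multipartition} then says $L_1$ and $L_2$ enter $\rho_w$ with the \emph{same} color, contradicting that $\rho_w$ is mixed. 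This replaces your swap entirely and avoids the simple-cycle and contradiction issues you flagged.
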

\begin{proof}  Consider an arbitrary, hypothetical mixed vertex $\rho_x$ in $C_1\cup C_2$.
	There are two cases:   
	($\rho_x\red \rho_u$ is in $C_1$ and $\rho_x\blue \rho_v$ is in $C_2$) or 
	($\rho_u\red \rho_x$ is in $C_1$ and $\rho_v\blue \rho_x$ is in $C_2$ ).
	\begin{figure}[htp]
		\begin{center}
			\begin{tikzpicture}[scale=.8, ->,>=stealth',auto,node distance=3cm,
				thick,main node/.style={circle,draw,font=\sffamily\small\bfseries}, node/.style={}]
				
				\node[A,label=above:$\rho_x$] at (0,.75) (1) {};
				\node[A,label=above:$\rho_u$] at (1.25,1.5) (2) {};
				\node[A,label=below:$\rho_v$] at (1.25,0) (4) {};
				\node[label=right:$$] at (-1.25,1.5) (3) {};
				\node[label=right:$$] at (-1.25,0) (5) {};
				
				\path[draw, thick, every node/.style={font=\sffamily\small}]
				(1) edge [red,line width=.08em] node [] {} (2)
				(1) edge [cyan,double,line width=.08em] node [] {} (4)
				(3) edge [red,double,line width=.08em] node [] {} (1)
				(5) edge  [cyan,line width=.08em]  node [] {} (1);
				%(4) edge [cyan,dotted] node [] {} (3);	
			\end{tikzpicture}\hskip1.52cm
			\begin{tikzpicture}[scale=.8, ->,>=stealth',auto,node distance=3cm,
				thick,main node/.style={circle,draw,font=\sffamily\small\bfseries}, node/.style={}]
				
				\node[A,label=left:$\rho_b$] at (0,.75) (1) {};
				\node[A,label=above:$$] at (1.25,1.5) (2) {};
				\node[A,label=below:$$] at (1.25,0) (4) {};
				\node[A,label=right:$\rho_w$] at (6.25,.75) (3) {};
				\node[A,label=right:$$] at (5,1.5) (6) {};
				\node[A,label=right:$$] at (5,0) (5) {};
				\node[A,label=right:$$] at (3.75,1) (21) {};
				\node[A,label=right:$$] at (2.25,1) (10) {};
				\node[A,label=right:$$] at (2.5,0) (20) {};
				\node[A,label=right:$$] at (4,0) (11) {};
				
				\path[draw, thick, every node/.style={font=\sffamily\small}]
				(1) edge [cyan] node [] {} (2)
				(1) edge [cyan,double,line width=.05em] node [] {} (4)
				(6) edge [red,double,line width=.05em] node [] {} (3)
				(5) edge  [cyan]  node [] {} (3)
				(21) edge [cyan,double,line width=.05em] node [] {} (6)
				(2) edge [red,line width=.05em] node [] {} (10)
				(4) edge [red,double,line width=.05em] node [] {} (20)
				(10) edge  [dashed,line width=.05em]  node [] {} (11)
				(11) edge  [red,line width=.05em]  node [] {} (5)
				(20) edge  [double,dashed,line width=.05em]  node [] {} (21);	
				\node[label=above:$L_1$] at (1.8,1.25) () {};				
				\node[label=above:$L_2$] at (4.25,1.25) () {};	
			\end{tikzpicture}
		\end{center}
		\caption{No mixed vertex in the union of two concordant cycles.}
		\label{NoMixedVertexFigure}
	\end{figure}
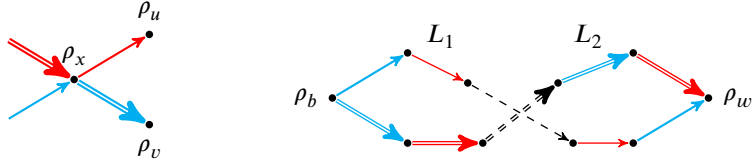
	Because both $C_1$ and $C_2$ are concordant,
	either case implies that $\rho_x$ is a non-basic vertex.
	Consequently, incoming arcs to a mixed vertex must have opposite color (see left image of Figure \ref{NoMixedVertexFigure}). 
	
	Now assume, to the contrary, that there is a mixed vertex in $C_1\cup C_2$.  Among all pairs $(\rho_b,\rho_w)$ such that $\rho_b$ is basic and $\rho_w$ is mixed, choose
	a pair minimizing the length of a shortest directed path in $C_1$ or $C_2$ from $\rho_b$ to $\rho_w$ in $C_1\cup C_2$.
	For $i=1,2$, let $L_i\subset C_i$ be the path from  $\rho_b$ to $\rho_w$
	(see right image of Figure \ref{NoMixedVertexFigure}).
	First observe that $L_1$ and $L_2$ are internally vertex-disjoint by the choice of the pair $(\rho_b,\rho_w)$.
	Therefore, $L_1, L_2$ are color-alternating directed paths.
	Furthermore, $L_1, L_2$ start with the same color, because $\rho_b$ is a basic vertex.  
	By Lemma \ref{multipartition}, the color of the arc of $L_1$ entering $\rho_w$ is the same as the color of the arc of $L_2$ entering $\rho_w$, contradicting that $\rho_w$ is a mixed vertex.
\end{proof}

%%%%%%%%%%%%%%%%%%%%%%%%%

\begin{proposition} \label{no_discordant_cycle}
	Let  $C_i,\ldots, C_t$ ($t\geq 2$) be concordant cycles in $supp(f)$, for some circulation $f$. If 
	$\bigcup_{i=1}^t C_i$ contains no discordant cycle, then 
	all cycles $C_i$, $i=1,\ldots,t$, visit the basic vertices in $supp(f)$ in the same circular order.
\end{proposition}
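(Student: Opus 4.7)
The plan is to prove the contrapositive: if two concordant cycles $C$ and $C'$ among the $C_i$'s visit the basic vertices of $supp(f)$ in different circular orders, then $C \cup C'$ (and hence $\bigcup_{i=1}^t C_i$) contains a discordant cycle. Thus it suffices to handle the case $t=2$, so let $C_1, C_2$ be two concordant cycles whose basic-vertex orders differ. Because $W(C_1)=W(C_2)=W(f)$, both cycles must visit exactly the same set of basic vertices, namely those indexing variables in $W(f)$.

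First I would locate the mismatch. Since the circular orders differ, one can choose basic vertices $b, b'$ consecutive on $C_1$ (so the segment $P_1 \subset C_1$ from $b$ to $b'$ has only non-basic internal vertices) such that the first basic vertex reached by $C_2$ from $b$ is some $b'' \neq b'$. Let $P_2$ be the portion of $C_2$ from $b'$ back around to $b$, well-defined since $b'$ also lies on $C_2$. Concatenating $P_1$ and $P_2$ produces a closed directed walk $\omega$ in $H := C_1 \cup C_2$ that visits $b$, $b'$, and all basic vertices of $C_2$ lying between $b'$ and $b$ in $C_2$'s circular order, but provably omits $b''$ (which lies strictly between $b$ and $b'$ along $C_2$).

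Next I would compute $W(\omega)$. Lemma \ref{no_mixed} guarantees that at every vertex of $H$ all incoming arcs share one color and all outgoing arcs share one color, and these two colors coincide precisely at the basic vertices. Summing the arc weights from (\ref{FaceGraphWeights}) along the arcs of $\omega$ (with the $\ell$-variables canceling by flow conservation) yields an inequality of the form $\gamma_\omega + \sum_{v \in A_\omega} \rho_v \leq \sum_{v \in B_\omega} \rho_v$, where $A_\omega \cup B_\omega$ is exactly the set of basic vertices visited by $\omega$ and each variable present has coefficient $\pm 1$ matching its sign in $W(f)$. Since $b''$ is absent, $\rho_{b''}$ has coefficient $0$ in $W(\omega)$ but nonzero $\pm 1$ in $W(f)$. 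Comparing coefficients shows $W(\omega)$ cannot equal $m \cdot W(f)$ for any integer $m \geq 1$ (this would force coefficient $\pm m$ on $\rho_{b''}$), while $m = 0$ is ruled out because $\rho_b$ appears in $W(\omega)$.

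Finally, $\omega$ is an integer circulation on $H$, so the integral form of Proposition \ref{real_cycle_decomp} expresses it as a non-negative integer combination of simple directed cycles $D_1, \ldots, D_s$ in $H$ satisfying $\sum_j \alpha_j W(D_j) = W(\omega)$. If every $D_j$ were concordant, the right-hand side would be a non-negative integer multiple of $W(f)$, contradicting the previous paragraph. Hence some $D_j$ is discordant; since $D_j$ lies in $H \subseteq \bigcup_{i=1}^t C_i$, this completes the contradiction. The main obstacle I anticipate is guaranteeing that $\omega$ truly omits a basic vertex even when $C_1$ and $C_2$ share non-basic vertices; this hinges on the color-uniformity supplied by Lemma \ref{no_mixed}, which forces $W(\omega)$ into the clean $\pm 1$-coefficient template that $W(f)$ follows.
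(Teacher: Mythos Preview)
Your argument is correct and follows the same contrapositive idea as the paper: find a directed cycle in $C_1\cup C_2$ that omits a basic vertex, which is then automatically discordant. However, the paper's execution is considerably shorter. Instead of building the closed walk $\omega$, computing $W(\omega)$ via Lemma~\ref{no_mixed}, and invoking a cycle decomposition, the paper simply picks three basic vertices $\rho_a,\rho_b,\rho_c$ that the two concordant cycles visit in opposite cyclic orders (say $C_i$ in order $\rho_a,\rho_b,\rho_c$ and $C_j$ in order $\rho_a,\rho_c,\rho_b$), deletes $\rho_b$ from $C_i\cup C_j$, and observes that the remaining digraph still contains both a $\rho_a\to\rho_c$ path (from $C_j$) and a $\rho_c\to\rho_a$ path (from $C_i$); hence it contains a directed cycle avoiding $\rho_b$, which is discordant.

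Your detour through $W(\omega)$ and Proposition~\ref{real_cycle_decomp} is unnecessary: once you know that the support of $\omega$ omits the basic vertex $b''$, \emph{every} simple cycle contained in that support omits $b''$, so its cycle inequality cannot contain the variable $\rho_{b''}$ and it is discordant for that reason alone. No weight comparison or integer-multiple analysis is needed. The no-mixed-vertex lemma is likewise not required for this proposition; the paper uses only the observation that a concordant cycle must pass through every basic vertex.
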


\begin{proof} The claim is trivial if $supp(f)$ contains at most two basic vertices, so we presume there are at least three basic vertices.
	Assume, on the contrary, that there are basic vertices $\rho_{a}, \rho_{b}, \rho_{c}$  and two concordant cycles $C_i$ and $C_j$ such that
	$C_i$ visits these basic variables in order $\rho_{a}, \rho_{b}, \rho_{c}$, and $C_j$ visits them in order
	$\rho_{a}, \rho_{c}, \rho_{b}$.  Consider the digraph obtained from $C_i \cup C_j$ by deleting vertex $\rho_b$.  This digraph
	contains a path from $\rho_{a}$ to $\rho_{c}$ (from $C_j$) and a path from $\rho_{c}$ to $\rho_{a}$ (from $C_i$).  Consequently,
	it contains a directed cycle.  This cycle exists in $\bigcup_{i=1}^t C_i$ and it must be discordant because it avoids basic vertex $\rho_b$, a contradiction.
%	\begin{figure}[htp]
%		\begin{center}
%			\begin{tikzpicture}[scale=1.25]
%				\node[B, label=above:$\rho_{a}$](v0)at(0,0){};
%				\node[W, label=above:$\rho_{b}$](v1)at(3,1){};
%				\node[W, label=below:$\rho_{c}$](v2)at(2,-1){};
%				\node[A, label=above:$\rho_{z}$](z1)at(2,1){};
%				\node[A](z)at(1.75,0){};
%				\node[A, label=below:$\rho_{w}$](z2)at(1,-1){};
%				\draw[double,->] (z1)--(z);\draw[double,->] (z)--(z2);
%				\draw[line width=.15em,->] (v0)--(1,.75);\draw[line width=.15em,->] (1,.75)--(z1);
%				\draw[line width=.05em,->] (z2)--(0,-.75);\draw[line width=.05em,->] (0,-.75)--(v0);
%				\draw[line width=.1em,->,dotted] (v1)--(3.5,.25)--(3,-.75)--(v2)--(z2);
%				\draw[line width=.15em,->] (z1)--(v1);
%				\node()at (3.7,-.25){$C_i$}; \node()at (2.15,-.25){$P_2$};
%				\node()at (1,1.2){$P_1$};
%			\end{tikzpicture}
%		\end{center}
%		\caption{There is a discordant cycle in the union of two concordent cycles
%			if they do not visit basic vertices in the same circular order.}
%	\end{figure}
%Walking along $C_j$ from $\rho_{a}$ let $\rho_{z}$ be the last common vertex of $P_1$ and $P_2$; notice that $\rho_{z}$ is different from $\rho_{b}$. Continuing the walk along $P_2$, let  $\rho_{w}$ be the next common vertex of $C_j$ and $C_i$; notice that $\rho_{w}$ exists and different from $\rho_{b}$. 
%Observe that the cycle starting with $\rho_{z}$ along $P_2$ until $\rho_{w}$ is reached, then continuing along $C_i$ back to $\rho_{z}$ is a discordant cycle, since it misses the basic vertex $\rho_{b}$.
\end{proof}
	
The next concordant cycle property is an anti-Helly property that helps restrict the decomposition of the
cycle analyzed in the Sublimation Loop into the linear combination of just two cycles. 

\begin{proposition}\label{helly}  Let $g$ be a circulation with concordant cycles
	 $C_1,\ldots, C_t$ ($t\geq 2$).  If $C_i\cup C_j$ contains no discordant cycle, for all $1\leq i<j\leq t$,  then   $\bigcup_{i=1}^t C_i$ has no  discordant cycle.
\end{proposition}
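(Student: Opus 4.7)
The plan is to extract from the pairwise hypothesis enough rigidity on $\bigcup_i C_i$ to force every cycle in it to visit all basic vertices of $supp(g)$ in a common cyclic order, hence to be concordant. First I would apply Lemma \ref{no_mixed} to each pair to get that $\bigcup_i C_i$ has no mixed vertex, and then apply Proposition \ref{no_discordant_cycle} to each pair, using transitivity of agreement on circular orders to conclude that all $C_i$ visit the basic vertices of $supp(g)$ in one common circular order $\rho_{a_1}, \ldots, \rho_{a_k}$.

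The crux, and the step I expect to be the main obstacle, is a uniqueness-of-placement claim: for each non-basic vertex $\rho_u$ of $\bigcup_i C_i$, the pair $(\rho_{a_j},\rho_{a_{j+1}})$ of consecutive basic vertices between which $\rho_u$ lies is the same in every $C_i$ containing $\rho_u$. I would prove this by contradiction: if $\rho_u$ lay between $\rho_{a_j}, \rho_{a_{j+1}}$ in $C_i$ and between a different pair $\rho_{a_{j'}}, \rho_{a_{j'+1}}$ in $C_{i'}$ with $j<j'$ (after choosing $\rho_{a_1}$ appropriately), then concatenating $C_i$'s directed subpath from $\rho_{a_1}$ to $\rho_u$ with $C_{i'}$'s directed subpath from $\rho_u$ back to $\rho_{a_1}$ produces a closed walk in $C_i \cup C_{i'}$ that avoids the basic vertices $\rho_{a_{j+1}},\ldots,\rho_{a_{j'}}$. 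The arc multiset of a closed walk is Eulerian, so this walk decomposes into simple cycles of $C_i \cup C_{i'}$, each using only vertices of the walk and hence missing at least one basic vertex of $supp(g)$. Such a simple cycle is discordant, contradicting the pairwise hypothesis.

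Given the uniqueness of placement, the arcs of $\bigcup_i C_i$ become local: each either lies inside a single strip between consecutive basic vertices in the common order (and, by Lemma \ref{multipartition}, raises a well-defined color-alternating level by one), or crosses into or out of such a strip at a basic vertex. Thus $\bigcup_i C_i$ is a cyclic chain of layered-DAG strips glued at the basic vertices, and deleting any single basic vertex $\rho_b$ breaks the chain, leaving a DAG.

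To finish, suppose for contradiction that $D$ is a discordant cycle in $\bigcup_i C_i$. If $D$ missed some basic vertex $\rho_b$, it would sit inside $(\bigcup_i C_i)\setminus \rho_b$, which is acyclic, a contradiction. Hence $D$ visits every basic vertex in the common circular order. Lemma \ref{multipartition} then implies that each color-alternating subpath of $D$ between consecutive basic vertices has the same length and blue-arc count as the corresponding subpath of any $C_i$, so $D$ has the same number of blue arcs as $g$ and the same basic-vertex contributions as any $C_i$, giving $W(D)=W(g)$. Thus $D$ is concordant, the desired contradiction, and Proposition \ref{helly} follows.
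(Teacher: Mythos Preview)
Your proposal is correct and follows essentially the same route as the paper's proof. Both arguments first use Lemma~\ref{no_mixed} and Proposition~\ref{no_discordant_cycle} pairwise to get a common circular order on basic vertices, and both hinge on the uniqueness-of-placement claim for non-basic vertices, derived by producing a discordant cycle in some $C_i\cup C_{i'}$ if a non-basic vertex sits between different pairs of consecutive basic vertices in two of the $C_i$'s. The paper compresses your final two paragraphs into the single assertion that ``the only way $\bigcup_{i=1}^t C_i$ has a discordant cycle is if some pair of concordant cycles share a non-basic variable $\rho_x$ that exists between different pairs of consecutive basic variables,'' whereas you make the layered-DAG strip structure and the acyclicity of $(\bigcup_i C_i)\setminus\rho_b$ explicit; your version is thus a more carefully justified rendering of the same argument.
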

\begin{proof}
	Applying Proposition \ref{no_discordant_cycle} to every pair $C_1$, $C_i$, $i=2,\ldots, t$, we find that all $C_i$'s visit basic vertices in the same circular order. The union of the directed paths between any two consecutive basic vertices along these cycles are color-alternating as described by Lemma \ref{multipartition}.  Because there are no mixed vertices (Lemma \ref{no_mixed}),  variables that appear in discordant's cycle inequality
	are a proper subset of the variables appearing in the concordant cycle inequality.  
	So, the only way that $\bigcup_{i=1}^t C_i$ has a discordant cycle is if some pair of concordant cycles, say $C_1$ and $C_2$, share
	a non-basic variable $\rho_x$ that exists between different pairs of consecutive basic variables of these cycles.
	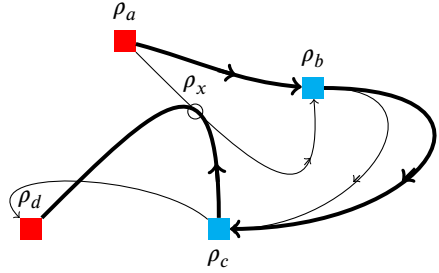
\begin{figure}[htp]
		\begin{center}
			\begin{tikzpicture}[scale=1.25]
				
				\node[Yr,label=above:$\rho_d$](x) at (0,0){};
				\node[Yb,label=below:$\rho_c$](y) at (2,0){};
				\draw[double,line width=.15em] (x) edge[out=45, in=90, looseness=2.5] (y);
				\draw[->] (y)  edge[out=-18 in=-180, looseness=1](x);
				\node[Yr,label=above:$\rho_a$](A) at (1,2){};
				\node[Yb,label=above:$\rho_b$](B) at (3,1.5){};
				\draw[->,double,line width=.15em] (A) edge[out=-15, in=175, looseness=1] (B);
				\draw[->] (A) edge[out=-45, in=275, looseness=2] (B);
				\draw[->] (B) edge[out=0, in=0, looseness=2] (y);
				\draw[->,double,line width=.15em] (B) edge[out=0, in=0, looseness=3] (y);
				\node[X,label=above:$\rho_x$]() at (1.75,1.25){};
				\draw[<-, line width=.15em] (3.95,.5) -- ++(45:0.2); 
				\draw[->, line width=.15em] (1.99,.5) -- ++(95:0.2); 
				\draw[<-, line width=.05em] (3.43,.5) -- ++(45:0.1); 
				\draw[->, line width=.04em] (2.91,.68) -- ++(50:0.1); 
				\draw[->, line width=.15em] (1.99,1.69) -- ++(-15:0.2);         
			\end{tikzpicture} 
		\end{center}
		\caption{If there is no discordant cycle in the union of two concordant cycles, then the union do not share non-basic vertices.}
		\label{DoNotShareBasic}
	\end{figure}
	Let $\rho_x$ lie between basic vertices $\rho_a$, $\rho_b$ on $C_1$, and between basic vertices $\rho_c$, $\rho_d$ on $C_2$ as seen in Figure \ref{DoNotShareBasic} ($\rho_b=\rho_c$ or $\rho_d=\rho_a$  is possible).
	The pair $C_1, C_2$ violates the hypothesis because the path from $\rho_c$ to $\rho_x$ along $C_2$, extended along $C_1$ to $\rho_b$, and then further extended along any path from $\rho_b$ to $\rho_c$,
	forms a digraph with a cycle that is discordant due to avoiding $\rho_a$.  
\end{proof}	

Contrapositive form of Proposition \ref{helly}  is used to prepare the input for the $2$-cycle Loop.
\begin{corollary} Let $f=\sum_{i=1}^t \alpha_i C_i$ be a circulation, where $t\geq 2$,  $C_1,\ldots, C_t$  are concordant cycles, and $supp(f)$ contains a discordant cycle. 
	There exists a circulation $g$ equivalent to $f$ such that $g=\frac12 C_i+\frac12 C_j$, for some $C_i$ and $C_j$, $1\leq i< j\leq t$ and $supp (g)$ contains a discordant cycle.
	\label{2cycles}
\end{corollary}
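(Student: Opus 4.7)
The plan is to read off the pair of cycles directly from the contrapositive of Proposition \ref{helly}. Since each $\alpha_i > 0$ in the decomposition $f = \sum_{i=1}^t \alpha_i C_i$, the arc set of $supp(f)$ coincides with the arc set of $\bigcup_{i=1}^t C_i$. Thus the hypothesis that $supp(f)$ contains a discordant cycle translates to: $\bigcup_{i=1}^t C_i$ contains a discordant cycle. Applying the contrapositive of Proposition \ref{helly} then yields indices $1 \leq i < j \leq t$ such that $C_i \cup C_j$ already contains a discordant cycle. This is the main (indeed only) conceptual step of the argument.

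Having fixed this pair, I would set $g := \tfrac{1}{2} C_i + \tfrac{1}{2} C_j$. A non-negative linear combination of cycle circulations is a circulation, so $g$ is a circulation. For equivalence with $f$, I would compute
\[
W(g) \;=\; \tfrac{1}{2} W(C_i) + \tfrac{1}{2} W(C_j),
\]
and invoke the concordance of $C_i$ and $C_j$ with $f$ to conclude $W(C_i) = W(C_j) = W(f)$, whence $W(g) = W(f)$. Finally, because both convex-combination weights are strictly positive, $supp(g)$ consists of exactly the arcs of $C_i \cup C_j$, which by construction contains a discordant cycle.

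The result is essentially a two-line repackaging: the only substantive content is contained in Proposition \ref{helly}, whose contrapositive hands us a witnessing pair. The one bookkeeping point worth flagging is confirming that $supp(g) = C_i \cup C_j$ as arc sets, i.e., that the coefficients $\tfrac{1}{2}$ do not allow any arc of $C_i \cup C_j$ to drop out of the support. Since both coefficients are strictly positive and circulations sum coordinatewise with non-negative values, no such cancellation can occur, and the chosen discordant cycle inside $C_i \cup C_j$ survives in $supp(g)$. I would expect no real obstacle beyond being careful that $supp(f)$ and $\bigcup_{i=1}^t C_i$ agree as arc sets under the $\alpha_i > 0$ assumption.
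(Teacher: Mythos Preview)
Your proposal is correct and follows exactly the approach the paper intends: the corollary is stated immediately after noting that the ``contrapositive form of Proposition \ref{helly} is used,'' and your argument is precisely that contrapositive together with the routine verification that $g=\tfrac12 C_i+\tfrac12 C_j$ is equivalent to $f$ and has support $C_i\cup C_j$. The only tacit point you might make explicit is that discordance is defined relative to the weight of the ambient circulation, so once $W(g)=W(f)$ the discordant cycle in $C_i\cup C_j$ is discordant with respect to $g$ as well.
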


%%%%%%%%%%%%%%%%%%%%%%%%%%%%%%%%%%%%%%%%%
\subsection{The Sublimation Loop} %%%%%%%%%%%%%%
\label{SublimationLoopSubSection}

 Suppose that $W$ is a redundant cycle inequality, say $W=\sum_{i=1}^r \alpha_i W_i$ expresses $W$ as a non-negative rational linear combination of other cycle inequalities,
where $\alpha_i \in \mathbb{Q}^+$ and $r \geq 2$.
The algorithm begins by constructing a circulation $f$ of the \FaceGraph.
To construct this circulation, a cycle of the \FaceGraph representing the inequality $W_i$ is chosen for each $i=1,\ldots,r$.
The arcs of the circulation $f$ are the arcs appearing in these chosen representative cycles.
The flow of an arc is defined as the sum of the coefficients belonging to the cycles containing that arc. 
Observe that $W(f)=W$.  It is also important to note that, because $W$ is redundant, $supp(f)$ contains a discordant cycle. 

The crucial step in the proof of Theorem \ref{integerNonNegativeLinearCombinationForInequalities} involves eliminating concordant 
cycles from $supp(f)$, resulting in a non-trivial integral decomposition of $W $ in terms of discordant cycle inequalities.
This step 
is realized  in the next proposition  by  the {\it sublimation} of $f$.

\begin{proposition} \label{sublimation_step}
	If $f$ is a circulation  and   $supp(f)$ contains at least one discordant and one concordant cycle, then there exists a circulation $h$ equivalent to $f$ such that $supp(h)$ contains a discordant cycle.
	Moreover, $supp(h)$ has fewer arcs than $supp(f)$. 	
\end{proposition}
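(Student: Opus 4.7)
The strategy is to construct $h$ by reducing the flow of $f$ along a concordant cycle $C\subset supp(f)$ and then rescaling to preserve the weight. Fix any such $C$ and set $\epsilon:=\min\{f(a):a\in C\}$; since $f(a)>0$ on every arc of its support, $\epsilon>0$. The key preliminary claim is that $\epsilon<1$. Otherwise $f-C$ is a non-negative circulation of weight $W(f)-W(C)=0$ (by concordance of $C$), and the only zero-weight circulation is the zero one, as noted preceding Proposition \ref{Circulation2LinearCombinationOfCycleInequalities}. Hence $f=C$, so $supp(f)=C$ is a single concordant cycle, contradicting the hypothesis that $supp(f)$ contains a discordant cycle.

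With $\epsilon<1$ in hand, define
\[
h\;:=\;\tfrac{1}{1-\epsilon}\bigl(f-\epsilon C\bigr).
\]
Then $h$ is a non-negative circulation, and the direct computation
$W(h)=\tfrac{1}{1-\epsilon}\bigl(W(f)-\epsilon W(C)\bigr)=W(f)$
shows that $h$ is equivalent to $f$. Every arc of $C$ at which $f$ attains the value $\epsilon$ drops to zero flow in $f-\epsilon C$ and hence in $h$, so $supp(h)\subsetneq supp(f)$.

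The remaining task---and the main obstacle---is to guarantee that $supp(h)$ still contains a discordant cycle. I plan to choose $C$ carefully: fix a discordant cycle $D\subset supp(f)$ and, whenever possible, pick a concordant $C$ with $\epsilon<\min_{a\in D}f(a)$, so that the dropped arc of $C$ cannot lie on $D$ and thus $D$ persists in $supp(h)$. In the tight complementary case, I would argue by contradiction: if no cycle of $supp(h)$ were discordant, then the rational decomposition of Proposition \ref{real_cycle_decomp} expresses $h$---and therefore $f=(1-\epsilon)h+\epsilon C$---as a non-negative combination of concordant cycles only. Since $supp(f)$ still contains a discordant cycle, Corollary \ref{2cycles} yields an equivalent circulation $g=\tfrac12 C_i+\tfrac12 C_j$ whose support contains a discordant cycle, and the structural results for concordant cycles (Lemma \ref{no_mixed} and Proposition \ref{helly}) should force $supp(g)\subsetneq supp(f)$, so that taking $h:=g$ completes the proof.

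The hard part is making the tight case rigorous: when every concordant cycle of $supp(f)$ has its $f$-minimum realized on an arc shared with every discordant cycle, the rescaled reduction by any single $C$ appears to annihilate all discordant cycles, and one must fall back on Corollary \ref{2cycles} and the rigidity of concordant cycles to obtain both the strict arc reduction and the surviving discordant cycle.
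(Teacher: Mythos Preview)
Your setup is sound: the reduction $h=\tfrac{1}{1-\epsilon}(f-\epsilon C)$ with $\epsilon=f_{\min}(C)$ is the right object, your proof that $\epsilon<1$ is correct, and you have correctly located the crux---showing that a discordant cycle survives in $supp(h)$. But neither of your two remedies closes that gap. The first---choosing a concordant $C$ with $\epsilon<\min_{a\in D}f(a)$---may simply be impossible; nothing prevents every concordant cycle from attaining its $f$-minimum on an arc of $D$. The second---falling back on Corollary~\ref{2cycles} to produce $g=\tfrac12 C_i+\tfrac12 C_j$ and declaring $h:=g$---comes with no argument that $supp(g)\subsetneq supp(f)$, and Lemma~\ref{no_mixed} and Proposition~\ref{helly} do not supply one. (Architecturally it is also backwards: in the paper Corollary~\ref{2cycles} is the exit taken \emph{after} sublimation can no longer be applied, so invoking it inside the sublimation step conflates the two loops.) You yourself flag this case as unresolved, so the proposal is incomplete as written.

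The paper sidesteps the whole difficulty with one move you are missing: \emph{anchor the discordant cycle in a decomposition of $f$ before subtracting the concordant one}. Using Proposition~\ref{real_cycle_decomp}, start the rational decomposition with $D$ first and then $C$, obtaining
\[
f=\beta D+\alpha C+\sum_{i}\alpha_iC_i,\qquad \beta,\alpha>0.
\]
Now set $g:=f-\alpha C=\beta D+\sum_i\alpha_iC_i$ and $h:=\tfrac{1}{1-\alpha}g$. Since $\beta>0$, the cycle $D$ lies in $supp(g)=supp(h)$ by construction---no case analysis, no appeal to later results. The bound $0<\alpha<1$ follows from the same zero-weight-circulation observation you used. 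Reducing by $D$ first (so that $D$ is already a summand when $C$ is peeled off) is the single idea that dissolves your ``tight case.''
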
 
\begin{proof}  Let $D$ and $C$ be a discordant and a concordant cycle in $supp(f)$, respectively.  Let $f = \beta D+\alpha C+ \sum_{i=1}^r\alpha_i C_i$ be a rational decomposition of $f$ as described in Proposition \ref{real_cycle_decomp} starting with $D$ and $C$.  Consider the circulation $g$ defined by
	$$ g(a) := \left\{
	\begin{array}{ll}
		f(a) & \mbox{ if $a$ is not in $C$ } \\
		f(a) - \alpha & \mbox{ if $a$ is in $C$. }
	\end{array} \right. $$
	Clearly $g = \beta D+ \sum_{i=1}^r\alpha_i C_i$ is a decomposition of $g$ and $W(g)=W(f)(1 - \alpha)$.
	Observe that $0 < \alpha< 1$, since $g$ is not the zero circulation ($supp(g)$ contains $D$) and its weight cannot be
	a negative sum of cycle inequalities.  Consequently, the circulation $h = \frac{1}{1 - \alpha} g$ is equivalent to $f$; 
	together with $D$ every  discordant
	cycle  in $supp(h)$ remains discordant, and  $supp(h)$ has fewer arcs than $supp(f)$.
\end{proof}

%%%%%%%% BIG FLOWCHART
\begin{center}
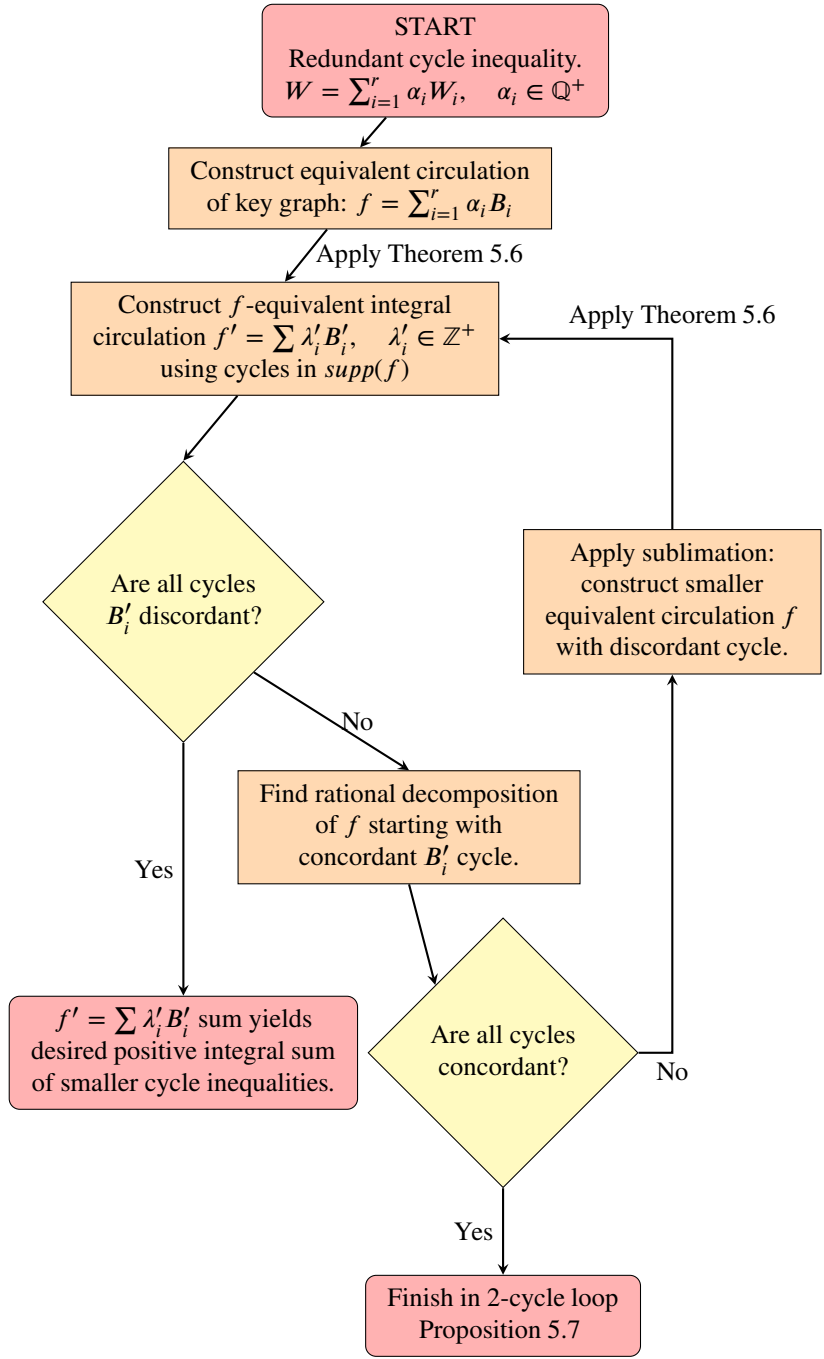
\begin{figure}[htp]
	\thisfloatpagestyle{empty}
	\begin{center}
	\begin{tikzpicture}[scale=0.25]% [node distance=2cm]
	% Nodes
	\node (start) [startstop] {$\begin{array}{c} \mbox{START} \\ \mbox{Redundant cycle inequality.} \\ W = \sum_{i=1}^r \alpha_i W_i, \quad \alpha_i \in \mathbb{Q}^+ \end{array}$};
	\node (CreateEquivalentCirculation) [process, below of=start, xshift=-1cm, yshift=-0.7cm] {$\begin{array}{c} \mbox{Construct equivalent circulation} \\ \mbox{of key graph: } f = \sum_{i=1}^r \alpha_i B_i \end{array}$};
	% {$\begin{array}{c} \mbox{create equivalent circulation} \\ \mbox{of key graph: }f = \sum_{i=1}^r \alpha_i B_i};
	\node (ConstructEquivIntegralCirculation) [process, below of=CreateEquivalentCirculation, xshift=-1cm, yshift=-1 cm] {$\begin{array}{c} \mbox{Construct }f\mbox{-equivalent integral} \\ \mbox{circulation } f^\prime = \sum \lambda^\prime_i B^\prime_i, \quad \lambda^\prime_i \in \mathbb{Z}^+ \\ \mbox{using cycles in } supp(f) \end{array}$};	
	\node (AllDiscordant) [decision, below of=ConstructEquivIntegralCirculation, xshift=-1.35 cm, yshift=-2.5cm] {$\begin{array}{c} \mbox{Are all cycles} \\ B^\prime_i \mbox{ discordant?}\end{array}$};
	\node (finish1) [startstop, below of=AllDiscordant, xshift=-0 cm, yshift=-5cm] {$\begin{array}{c} f^\prime = \sum \lambda^\prime_i B^\prime_i \mbox{ sum yields} \\ \mbox{desired positive integral sum} \\ \mbox{of smaller cycle inequalities}. \end{array}$};
	\node (RationalDecompositionWithConcordant) [process, below of=AllDiscordant, xshift=3cm, yshift=-2cm] {$\begin{array}{c} \mbox{Find rational decomposition} \\ \mbox{of } f \mbox{ starting with} \\ \mbox{concordant } B^\prime_i \mbox{ cycle.} \end{array}$};

	\node (AllConcordant) [decision, below of=RationalDecompositionWithConcordant, xshift=1.25 cm , yshift=-2 cm] {$\begin{array}{c} \mbox{Are all cycles} \\ \mbox{concordant?} \end{array}$};
	\node (stop) [startstop, below of=AllConcordant, xshift=0 cm , yshift=-2.5 cm] {{$\begin{array}{c} \mbox{Finish in $2$-cycle loop} \\ \mbox{Proposition \ref{Case_d} } \end{array}$}};
	
	\node (sublimation) [process, right of=AllDiscordant, xshift=5.5cm, yshift=0cm] {{$\begin{array}{c} \mbox{Apply sublimation:} \\ \mbox{construct smaller} \\ \mbox{equivalent circulation } f \\ \mbox{with discordant cycle.} \end{array}$}};
	
	% Arrows
	\draw [arrow] (start) -- (CreateEquivalentCirculation.north) ;
	\draw [arrow] (CreateEquivalentCirculation) -- node[anchor=west] {Apply Theorem \ref{sublimation_step}}  (ConstructEquivIntegralCirculation.north);
	\draw [arrow] (AllDiscordant.south) -- node[anchor=east] {Yes} (finish1);
	\draw [arrow] (AllDiscordant.south east) -- node[anchor=west] {No} (RationalDecompositionWithConcordant.north);
	\draw [arrow] (ConstructEquivIntegralCirculation) -- (AllDiscordant.north);
	\draw [arrow] (AllConcordant) -- node[anchor=east] {Yes} (stop);
	\draw [arrow] (RationalDecompositionWithConcordant.south) -- (AllConcordant.north west);
	\draw [arrow] (AllConcordant.east) -| node[anchor=north] {No} (sublimation.south);
	\draw [arrow] (sublimation) |- node[anchor=south] {Apply Theorem \ref{sublimation_step}} (ConstructEquivIntegralCirculation);
	
	\end{tikzpicture}
	\end{center}

	\caption{A flowchart depicting part of the algorithm that takes a cycle inequality that is a positive linear combination of other cycle inequalities and returns an equivalent integral sum of smaller cycle inequalities.  This chart highlights the sublimation loop.}
\label{SubLoop}
\end{figure}
\end{center}

%%%%%%%%%%%%%%%%%%

 The Sublimation Loop either produces the required integral linear combination of the redundant cycle inequality $W$ or it  exits with an equivalent circulation  $g=\frac12 C_1 +\frac12 C_2$, where $C_1$, $C_2$ are concordant cycles and $supp(g)$ contains a discordant cycle.  Dealing with this circulation is the task of the $2$-cycle Loop that follows the Sublimation Loop.

The Sublimation Loop starts with the application of Proposition \ref{Circulation2LinearCombinationOfCycleInequalities} to $f$.  
This produces an integral decomposition $f=\sum_{i=1}^t \lambda_i C_i$, where $\lambda_i>0$ is integer,  and $C_i$ is a cycle in $supp(f)$,
for $i=1,\ldots,t$. If $C_i$ is discordant for every  $1\leq i\leq t$, then the procedure terminates with a required integral combination of $W$. Otherwise, the decomposition of $f$ given by
Proposition \ref{Circulation2LinearCombinationOfCycleInequalities} is trivial ($t = 1$) and we need to find another decomposition.  Note that in this
latter trivial case, a concordant cycle is returned and, from this concordant cycle, a rational decomposition can be started.
Starting with this concordant cycle, Proposition \ref{real_cycle_decomp} returns a rational decomposition containing at least two cycles (because $f$ is not a single cycle circulation.  Indeed, the
decomposition begins with a concordant cycle and the circulation contains a discordant cycle).

If the decomposition is {\em heterogeneous}, that is, among the cycles there is a concordant and a discordant cycle, then we return to the Sublimation step in Procedure \ref{sublimation_step}. 
The Sublimation step removes a concordant cycle from $supp(f)$ and returns the rational decomposition of a circulation $h$ that is equivalent to $f$ and contains a discordant cycle. 
This $h$ restarts the loop in the role of $f$. Since $supp(h)$ contains fewer arcs than $supp(f)$, the Sublimation Loop is finite.

If all cycles are concordant in the rational decomposition returned by Proposition  \ref{real_cycle_decomp}, then applying the corollary of Proposition \ref{helly}
links the two loops of the procedure. 
This may be necessary if the attempted sublimation of a concordant cycle destroys all discordant ones,
which actually occurs in the circulation shown in Figure \ref{figExampleB}.
Corollary \ref{2cycles} reduces the circulation  to an equivalent circulation  
$g=\frac12 C_1 +\frac12 C_2$.  
cx 

%%%%%%%%%%%%%%%%%%%%%%%
\subsection{The $2$-cycle Loop}%%%%%%%%
If the Sublimation Loop does not find the desired non-negative integral linear combination, then it 
exits with a circulation $g$ equivalent to $f$ such that $W(g)=W(f)=W$, and $g=\frac12 C_1 +\frac12 C_2$, where $C_1$, $C_2$ are distinct concordant cycles and $supp(g)$ contains a discordant cycle. 
Resolving these types of circulations is the task of the $2$-cycle Loop.

The $2$-cycle Loop  consists of a procedure that proves the Proposition \ref{Case_d}.
The flow chart of the $2$-cycle Loop is presented in Figure \ref{2-cycle_Loop}. The algorithm reduces circulation $g$ 
through a sequence of equivalent circulations with a decreasing number of arcs. 
The reductions terminate producing a circulation that demonstrates the desired integer linear combination, possibly using arcs not found in $supp(g)$.

\begin{proposition}\label{Case_d} Let $g=\frac12 C_1 +\frac12 C_2$, where $C_1$, $C_2$ are distinct concordant cycles. If  $W(g)$ is a cycle inequality and $supp(g)$ contains a discordant cycle,
then there is an equivalent circulation $g^\prime$ that 
is the non-negative integral linear combination of discordant cycles. 
\end{proposition}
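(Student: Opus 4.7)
The plan is to construct an equivalent integer circulation of $g$ supported on discordant cycles, proceeding by iterative reductions that strictly decrease $|supp(g)|$, possibly using arcs in $G_P$ outside $supp(g)$ whose existence is guaranteed by Lemma \ref{diamond}. Termination of the iteration produces the desired non-negative integer combination of discordant cycle inequalities.

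First, I would fix a discordant cycle $D \subseteq C_1 \cup C_2$. Since $C_1$ and $C_2$ are concordant but $D$ is not, $D$ must use arcs from both $C_1 \setminus C_2$ and $C_2 \setminus C_1$, so $D \neq C_1, C_2$. Because $C_1 \cup C_2$ has no mixed vertex (Lemma \ref{no_mixed}), the arc multiset $C_1 + C_2 - D$ (counting arcs of $C_1 \cap C_2$ with multiplicity $2$) is a non-negative integer circulation, which decomposes into cycles by the integer form of Proposition \ref{real_cycle_decomp}. I would analyze whether this residual decomposition contains a concordant cycle $D^*$. If so, I would swap $D^*$ against one of $C_1, C_2$ in the representation of $g$, obtaining an equivalent circulation $g^\dagger$ in which the arcs of $C_1 \cap C_2$ that $D$ bypasses are eliminated from the support, giving $|supp(g^\dagger)| < |supp(g)|$. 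I would then reapply the proposition inductively to $g^\dagger$, using that a discordant cycle persists in $supp(g^\dagger)$ (either $D$ itself, or another obtained by the swap).

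The main obstacle is the case in which every cycle in the residual decomposition is discordant, so that $C_1 + C_2$ decomposes as $D$ plus discordant cycles $D^*_1, \ldots, D^*_m$ satisfying $W(D) + \sum_i W(D^*_i) = 2 W(g)$; here the natural half-integer combination $g \equiv \tfrac{1}{2} D + \tfrac{1}{2}\sum_i D^*_i$ is not yet an integer circulation. To resolve this parity obstruction, I would locate a vertex where two of these cycles meet with arcs of distinct colors (forced by the no-mixed-vertex property combined with the discordance of the cycles), and invoke Lemma \ref{diamond} to produce a fourth arc completing a diamond; this yields new discordant cycles $E, E'$ in $G_P$ whose combined weights differ from those of the original pair by a red-loop inequality $0 \leq \rho_x$ (itself a discordant cycle inequality when $x$ is not basic in $W(g)$). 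Arithmetic bookkeeping of these diamond exchanges, combined with suitable integer multiples of loop cycles, assembles an equivalent integer circulation of $g$ expressible as a non-negative integer combination of discordant cycles. Strict monotone decrease of $|supp(g)|$ across the outer reductions ensures termination.
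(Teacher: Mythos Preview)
Your approach—working with the integer circulation $C_1 + C_2 - D$ for a chosen discordant cycle $D$—is genuinely different from the paper's. The paper never subtracts a discordant cycle; instead it locates a vertex $\rho_w$ where $C_1$ and $C_2$ diverge and applies the diamond lemma directly to that local configuration, either splitting one of $C_1,C_2$ into two discordant cycles whose weights sum to $W(g)$ (terminating), or shrinking the pair $(C_1,C_2)$ to one with strictly smaller support (recursing).

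Your proposal has two gaps.

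In your first case (the residual $C_1+C_2-D$ contains a concordant cycle $D^*$), the swap-and-recurse step is not justified: after replacing $C_1$ by $D^*$ you assert that a discordant cycle ``persists'' in $supp(g^\dagger)=D^*\cup C_2$, but nothing forces this, so the inductive hypothesis may not apply. This case is actually easy to finish differently: at most one concordant cycle can occur (with multiplicity one) in any integer cycle decomposition of $C_1+C_2$ that also contains $D$, since two would leave a nonzero nonnegative circulation of weight $-W(D)$. Hence $C_1+C_2-D^*$ is already an integer circulation of weight $W(g)$ that decomposes into discordant cycles, and you are done without recursion.

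The serious gap is your second case. The paper's own example (Figure~\ref{figEx}) lands squarely here: taking $D=(\rho_3,\rho_4,\rho_6,\rho_8,\rho_9)$ one obtains $C_1+C_2=D+E_1+E_2$ with all three cycles discordant, and no subfamily of their weights sums to $W(g)$; the required integer expression uses the arc $\rho_7\red\rho_5$, which lies outside $supp(g)$. Your description of how the diamond lemma resolves this is not a proof. You do not specify which diamond to form; the assertion that the new cycles $E,E'$ differ from the old pair by a red-loop inequality $\langle 0\le\rho_x\rangle$ is unsupported, and such a loop is a cycle inequality of $G_P$ only when $x$ has length zero in the canonical representation, which you have not arranged; and ``arithmetic bookkeeping'' is not an argument. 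The paper's proof succeeds here precisely because it works at a divergent vertex of $C_1,C_2$ where the diamond lemma is guaranteed to furnish an arc splitting $C_2$ (or $C_1$) into two discordant cycles whose weights sum \emph{exactly} to $W(g)$; your outline neither locates such a vertex nor establishes the corresponding weight identity.
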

\begin{proof}
Lemma \ref{no_mixed} implies that $supp(g)$ has no mixed vertex.
Consequently, variables that appear in an inequality corresponding to a discordant cycle of $supp(g)$ must
correspond to vertices that are basic.  This means that a discordant cycle in $supp(g)$ corresponds to an inequality containing
a proper subset of the variables appearing in $W(g)$.
In particular, this means that $supp(g)$ (and therefore the concordant cycles $C_1$ and $C_2$) contain at least two basic vertices.
Notice that if $W$ is a $1$-variable redundant inequality, then its integral decomposition into discordant cycle inequalities is realized in the Sublimation Loop.  

Because $C_1\neq C_2$, there exists a vertex 
$\rho_w$ such that $C_1$ and $C_2$  `diverge' after both passing through $\rho_w$, that is, 
$\rho_w\black \rho_x$ belongs to $C_1$ and $\rho_w\black \rho_y$ belongs to $C_2$, and  $\rho_x\neq \rho_y$. The color of these divergent arcs is the same, 
because by  Lemma \ref{no_mixed}, $C_1\cup C_2$ has no mixed vertex.

	Case $1$:   $\rho_x\in C_2$. Let $\rho_v\black\rho_x$ be the arc in $C_2$. Because there are no mixed vertices, there are four possible colored configurations listed 
	in Figure \ref{x_in_C2}.   Observe that all of these vertices are necessarily basic.
			
For $\rho_v\red\rho_x$, the \FaceGraph contains the red arc  $\rho_v\red\rho_y$, by Lemma \ref{diamond}. The removal of the arcs $\rho_v\red\rho_x$ and $\rho_w\red\rho_y$ from $C_2$ leads to a partition of the vertices of  cycle $C_2$ into two disjoint cycles: $C_2^\prime$ beginning with the arc $\rho_w\red\rho_x$, and $C_2^{\prime\prime}$ beginning with the arc $\rho_v\red\rho_y$. Because $C_2$ is concordant and all of these vertices
are basic,  we obtain the required integer decomposition $g^\prime=C_2^\prime + C_2^{\prime\prime}$ into discordant cycles.
\begin{figure}[htp]
		\begin{center}
			\begin{tikzpicture}[scale=.8, ->,>=stealth',auto,node distance=3cm,
				thick,main node/.style={circle,draw,font=\sffamily\small\bfseries}, node/.style={}]
				
					\node[A,label=left:$\rho_w$] at (0,.75) (1) {};
				\node[A,label=above:$\rho_x$] at (1.25,1.5) (2) {};
				%\node[A,label=right:$\rho_z$] at (2.5,.75) (3) {};
				\node[A,label=below:$\rho_y$] at (1.25,0) (4) {};
				\node[A,label=left:$\rho_v$] at (0,2.25) (5) {};
				
				\path[draw, thick, every node/.style={font=\sffamily\small}]
				(1) edge [red] node [] {} (2)
				(1) edge [red,double,line width=.05em] node [] {} (4)
				(5) edge [red,double,line width=.05em] node [] {} (2)
				(5) edge  [red,dashed]  node [] {} (4);
				%(4) edge [cyan] node [] {} (3);	
			\end{tikzpicture}
			\hskip.5cm%RED DEGENERE
			\begin{tikzpicture}[scale=.8, ->,>=stealth',auto,node distance=3cm,
				thick,main node/.style={circle,draw,font=\sffamily\small\bfseries}, node/.style={}]
				
					\node[A,label=left:$\rho_w$] at (0,.75) (1) {};
				\node[A,label=above:$\rho_x$] at (1.25,1.5) (2) {};
				%\node[A,label=right:$\rho_z$] at (2.5,.75) (3) {};
				\node[A,label=right:{$\rho_y=\rho_v$}] at (1.25,0) (4) {};
			%\node[A,label=left:$\rho_v$] at (0,2.25) (5) {};
				
				\path[draw, thick, every node/.style={font=\sffamily\small}]
				(1) edge [red] node [] {} (2)
				(1) edge [red,double,line width=.05em] node [] {} (4)
				(4) edge [red,double,line width=.05em] node [] {} (2);
				(4) edge  [red]  node [] {} (4);
				%(4) edge [cyan] node [] {} (3);	
				% \draw[red] (4) edge[loop below] (4);
				 \draw[red,line width=.1em,dashed] (4) edge[out=200, in=-60, looseness=21] (4);
			\end{tikzpicture}
			\hskip.5cm%BLUE
			\begin{tikzpicture}[scale=.8, ->,>=stealth',auto,node distance=3cm,
				thick,main node/.style={circle,draw,font=\sffamily\small\bfseries}, node/.style={}]
				
					\node[A,label=left:$\rho_w$] at (0,.75) (1) {};
				\node[A,label=above:$\rho_x$] at (1.25,1.5) (2) {};
				%\node[A,label=right:$\rho_z$] at (2.5,.75) (3) {};
				\node[A,label=below:$\rho_y$] at (1.25,0) (4) {};
				\node[A,label=left:$\rho_v$] at (0,2.25) (5) {};
				
				\path[draw, thick, every node/.style={font=\sffamily\small}]
				(1) edge [cyan] node [] {} (2)
				(1) edge [cyan,double,line width=.05em] node [] {} (4)
				(5) edge [cyan,double,line width=.05em] node [] {} (2)
				(5) edge  [cyan,dashed]  node [] {} (4);
				%(4) edge [cyan] node [] {} (3);	
			\end{tikzpicture}
			\hskip.5cm %BLUE DEGENERE
				\begin{tikzpicture}[scale=.8, ->,>=stealth',auto,node distance=3cm,
			thick,main node/.style={circle,draw,font=\sffamily\small\bfseries}, node/.style={}]
				
					\node[A,label=left:$\rho_w$] at (0,.75) (1) {};
				\node[A,label=above:$\rho_x$] at (1.25,1.5) (2) {};
				%\node[A,label=right:$\rho_z$] at (2.5,.75) (3) {};
				\node[A,label=left:{$$}] at (1.25,0) (4) {};
				\node[label=left:{$\rho_y=\rho_v(=\rho_z)$}] at (1.25,-0.25) () {};
				%\node[A,label=left:$\rho_v$] at (0,2.25) (5) {};
				
				\path[draw, thick, every node/.style={font=\sffamily\small}]
				(1) edge [cyan] node [] {} (2)
				(1) edge [cyan,double,line width=.05em] node [] {} (4)
				(4) edge [cyan,double,line width=.05em] node [] {} (2);
				%(5) edge  [red,dotted]  node [] {} (4);
				%(4) edge [cyan] node [] {} (3);	
			\draw[red,line width=.1em,dashed] (4) edge[out=270, in=20, looseness=15] (4);
			\end{tikzpicture}
			\end{center}
		\caption{ $\rho_w$ is a monochromatic divergence, and $\rho_x\in C_2$}
		\label{x_in_C2}
	\end{figure}
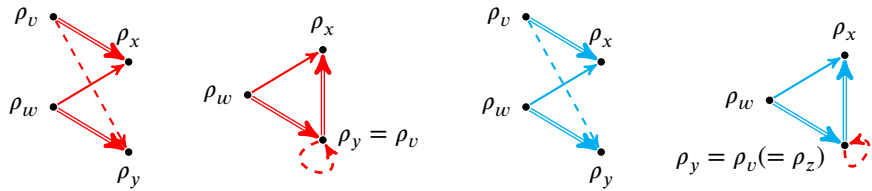
%\newpage
	
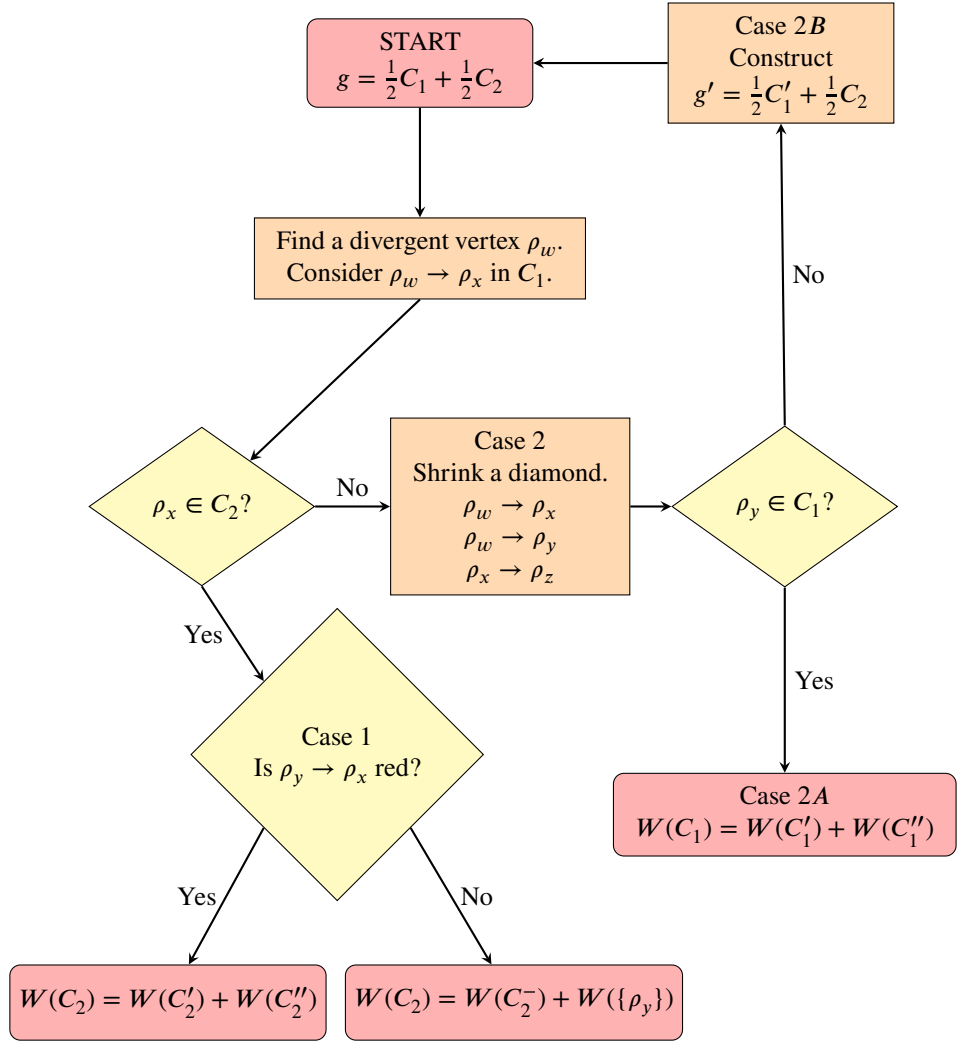
\begin{figure}[htp]
\begin{center}
\begin{tikzpicture}[node distance=1.6cm]
	\centering
	% Nodes
	\node (start) [startstop] {$\begin{array}{c} \mbox{START} \\ g = \frac12 C_1 + \frac12 C_2\end{array}$};
	\node (process1) [process, below of=start, xshift=0cm, yshift=-1cm] {$\begin{array}{c} \mbox{Find a divergent vertex } \rho_w.\\ \mbox{Consider } \rho_w \rightarrow \rho_x \mbox{ in } C_1. \end{array}$};
	\node (decision1) [decision, below of=process1, xshift=-2.9cm, yshift=-1.7cm] {$\rho_x \in C_2$?};
	\node (decision2) [decision, below of=decision1, xshift=1.8cm, yshift=-1.7cm] {$\begin{array}{c} \mbox{Case }1 \\ \mbox{Is }\rho_y \rightarrow \rho_x  \mbox{ red?}\end{array}$ };
	\node (process2) [process, right of=decision1, xshift=2.5cm, yshift=0cm] {$\begin{array}{c} \mbox{Case }2\\ \mbox{Shrink a diamond.}\\ \rho_w \rightarrow \rho_x \\ \rho_w \rightarrow \rho_y \\ \rho_x \rightarrow \rho_z \end{array}$};
	\node (decomp1) [startstop, below of=decision2, xshift=-2.25cm, yshift=-1.7cm] {$W(C_2)=W(C_2^\prime)+W(C_2^{\prime\prime})$};
	\node (decomp2) [startstop, below of=decision2, xshift=+2.35cm, yshift=-1.7cm] {$W(C_2)=W(C_2^-)+W(\{\rho_y\})$};
	
	\node (decision3) [decision, right of=process2, xshift=2.05cm, yshift=-0cm] {$\rho_y \in C_1$?};
	\node (fin) [startstop, below of=decision3, yshift=-2.5cm] {$\begin{array}{c} \mbox{Case }2A \\ W(C_1)=W(C_1^\prime)+W(C_1^{\prime\prime})\end{array}$};
	
	\node (process3) [process, right of=start, xshift=3.2cm, yshift=0cm] {$\begin{array}{c} \mbox{Case }2B \\ \mbox{Construct} \\ g^\prime = \frac12 C_1^\prime + \frac12 C_2 \end{array}$};
	
	% Arrows
	\draw [arrow] (start) -- (process1);
	\draw [arrow] (process1.south) -- (decision1);
	\draw [arrow] (decision1.south) -- node[anchor=east] {Yes} (decision2.north west);
	\draw [arrow] (decision1) -- node[anchor=south] {No} (process2);
	\draw [arrow] (decision2.south west) -- node[anchor=east] {Yes} (decomp1);
	\draw [arrow] (decision2.south east) -- node[anchor=west] {No} (decomp2);
	\draw [arrow] (decision3.south) -- node[anchor=west] {Yes} (fin.north);
	\draw [arrow] (process2) -- (decision3);
	\draw [arrow] (decision3.north) -- node[anchor=west] {No} (process3.south);
	\draw [arrow] (process3) -- (start);
	
\end{tikzpicture}
\end{center}
\caption{
A flowchart depicting the $2$-cycle Loop and the logical progression of the proof of Proposition \ref{Case_d}. 
The process begins with a cirulation $g$, defined as
the $\frac{1}{2}$-weighted sum of two concordant cycles exihibiting a redundant cycle inequality weight.  Termination occurs in one of three states, each
of which yields a $g$-equivalent non-negative integral linear combination of discordant cycles.}
\label{2-cycle_Loop}
\end{figure}
	
Notice that if $\rho_v=\rho_y$, then $C_2^{\prime\prime}$ becomes the red loop $\rho_v\red\rho_y$, thus in this case the decomposition of the equivalent circulation is  $g^\prime=(C_2-\rho_y) + (\rho_y\red\rho_y)$.

For $\rho_v\blue\rho_x$, Lemma \ref{diamond} implies $\rho_y\neq\rho_v$ when we apply it with
$\rho_w, \rho_x$ and $\rho_y=\rho_v=\rho_z$
 because there are no blue loops in $G_P$. Hence the blue triangle on the right of Figure \ref{x_in_C2} is actually not present in a \FaceGraph. A decomposition of $C_2$ is obtained in the same way as for the red divergence.

	Case $2$: $\rho_x\not\in C_2$. In this case either we obtain an integer decomposition of a concordant cycle (as in Case $1$) or we reduce the number of arcs of the circulation producing
	a new equivalent circulation with a discordant cycle that is the $\frac{1}{2}$-weighted sum of two concordant cycles.  This latter case we refer to as `shrinking' the circulation.
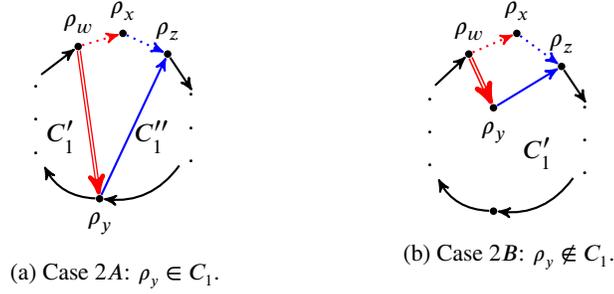
\begin{figure} [H]
		\begin{center}
			\begin{subfigure}[t]{0.05\textwidth}
				\centering
				$\ $
			\end{subfigure}
			\begin{subfigure}[T]{0.35\textwidth}
				\centering
		 		\begin{tikzpicture}[scale=1.1, ->,>=stealth',auto,node distance=3cm,
		 	main node/.style={circle,draw,font=\sffamily\small\bfseries}, node/.style={}]
		 			\centering
		 			\node[A,label=above:$\rho_w$] at (-0.54064082, 0.84125353) (1) {};
		 			\node[A,label=above:$\rho_x$] at (0,1) (2) {};
		 			\node[A,label=above:$\rho_z$] at (0.54064082, 0.75) (3) {};
		 			\node[A,label=below:$\rho_y$] at (-0.28173256, -1) (4) {};
		 			\node[node] at (0.75574957, -0.486073) (6) {};  % phantom node
		 			\node[node] at (0.90963200, 0.2) (7) {};  % phantom node
		 			\node[node] at (-1, -0.5) (8) {};  % phantom node
		 			\node[node] at (-1.1, .41501) (9) {};  % phantom node
		 	\node[]() at (0.35,-.25){$C_1^{\prime\prime}$};\node[]() at (-.75,-.25){$C_1^\prime$};
		 			\node[label=center:\rotatebox{90}{\bf $. \quad . \quad .$}] at (0.824, -0.14231484) {};
		 			\node[label=center:\rotatebox{90}{\bf $. \quad . \quad .$}] at (-1.05, -0.04231484) {};
		 			
		 			\path[draw, thick, every node/.style={font=\sffamily\small}]
		 			(3) edge [ black] node {} (7)
		 			(1) edge [red,dotted] node {} (2)
					(2) edge [blue,dotted] node  {} (3)
		 			(1) edge [red,double,line width=.05em] node {} (4)
		 			(4) edge [  blue] node {} (3)			
		 			(6) edge [ black,bend left] node {} (4)
		 			(9) edge [ black] node {} (1)
		 			(4) edge [ black,bend left] node {} (8);
		 			
		 		\end{tikzpicture}
		 	\caption{Case $2A$:  $\rho_y\in C_1$.}
			\end{subfigure}
			\begin{subfigure}[t]{0.05\textwidth}
				\centering
				$\ $
			\end{subfigure}
			\begin{subfigure}[T]{0.35\textwidth}
				\centering
					\begin{tikzpicture}[scale=1.1, ->,>=stealth',auto,node distance=3cm,
		 	main node/.style={circle,draw,font=\sffamily\small\bfseries}, node/.style={}]
		 			\centering
		 			\node[A,label=above:$\rho_w$] at (-0.58, 0.9) (1) {};
		 			\node[A,label=above:$\rho_x$] at (0,1.15) (2) {};
		 			\node[A,label=above:$\rho_z$] at (0.54064082, 0.75) (3) {};
		 			\node[A,,label=above:$ $] at (-0.28173256, -1) (4) {};
					\node[A,label=below:$\rho_y$] at (-.28173256, .25) (y) {};
		 			\node[node] at (0.75574957, -0.486073) (6) {};  % phantom node
		 			\node[node] at (0.90963200, 0.2) (7) {};  % phantom node
		 			\node[node] at (-1, -0.5) (8) {};  % phantom node
		 			\node[node] at (-1.1, .41501) (9) {};  % phantom node
		 			\node[]() at (0.25,-.3){$C_1^\prime$};
	
		 			\node[label=center:\rotatebox{90}{\bf $. \quad . \quad .$}] at (0.824, -0.14231484) {};
		 			\node[label=center:\rotatebox{90}{\bf $. \quad . \quad .$}] at (-1.05, -0.04231484) {};

		 			\path[draw, thick, every node/.style={font=\sffamily\small}]
		 			(3) edge [ black] node {} (7)
		 			(1) edge [red,dotted] node {} (2)
					(2) edge [blue,dotted] node  {} (3)
		 			(1) edge [double, red,line width=.06em] node {} (y)
		 			(y) edge [blue] node {} (3)
		 			(6) edge [ black,bend left] node {} (4)
		 			(9) edge [ black] node {} (1)
		 			(4) edge [ black,bend left] node {} (8);					
					%(2) edge [double, red,line width=.06em] node {} (ph1)
					%(ph2) edge [double, blue,line width=.06em] node {} (2);
		 		\end{tikzpicture}	
		 	\caption{Case $2B$:  $\rho_y\not\in C_1$.}
		 	\end{subfigure}
		 	\end{center}
		 	\caption{The two cases of shrinking a diamond.}
			\label{shrinkredred}
		 \end{figure} 	
Let $\rho_x\black \rho_z$ belong to $C_1$.  Because $\rho_x \not\in C_2$, the arcs  $\rho_w\black \rho_x$ and  $\rho_x\black \rho_z$ have opposite color.
Consequently, by Lemma \ref{diamond}, the four (not necessarily distinct) vertices, $\rho_w,\rho_x,\rho_y,\rho_z$ form a diamond of the {\it \FaceGraph}. Namely, 
$(\rho_w,\rho_x,\rho_z)$ and $(\rho_w,\rho_y,\rho_z)$ are directed 3-paths  such that 
$\rho_w\black \rho_x$ and $\rho_w\black \rho_y$ have the same color, and  $\rho_x\black \rho_z$ and $\rho_y\black \rho_z$ have the same opposite color. 

{\t Shrinking the diamond} consists of 
removing  $(\rho_w,\rho_x,\rho_z)$  from $C_1$, 
and replacing it with   $(\rho_w,\rho_y,\rho_z)$. 
Notice that $\rho_w\black \rho_y$ is in $C_2$, and even if the arc $\rho_y\rightarrow \rho_z$ of the \FaceGraph  is not in $supp(g)$, shrinking reduces the number of arcs in the support. 
Also carefully note that, since $\rho_x$ is not basic, the new circulation has the same weight and still has a discordant cycle.

To complete the analysis, we discuss the case  $\rho_w\red \rho_x$ and $\rho_w\red \rho_y$;  
the case when both of these arcs are blue is essentially the same (details are not repeated). 
There are two cases to consider depending on whether $\rho_y$ is a vertex of $C_1$ or not (see Figure \ref{shrinkredred}).

Case $2A$:  $\rho_y\in C_1$. Let $C_1^\prime, C_1^{\prime\prime}$ be the two cycles sharing the vertex $\rho_y$ and replacing  $C_1$ after shrinking. We claim that the circulation $g^\prime=C_1^\prime +C_1^{\prime\prime}$ is equivalent to $g$, therefore,  both cycles  in the decomposition are discordant. The equivalence follows by checking that $\rho_y$ is a basic vertex in the decomposition if and only if it was a basic vertex in $C_1$. This is true in each of the four possible colorings passing through $\rho_y$ as shown in Figure \ref{fig:E} Thus we obtain  the required non-negative integral linear combination $W=W(g)=W(g^\prime)=W(C^\prime)+W(C^{\prime\prime})$.
		 
\begin{figure}[H]
		\begin{subfigure}[b]{0.22\textwidth}
			\centering
			\begin{tikzpicture}[scale=0.6, ->,>=stealth',auto,node distance=3cm,
				thick,main node/.style={circle,draw,font=\sffamily\small\bfseries}, node/.style={}]
				
				\node[node] at (2,2) (1) {};
				\node[node] at (4,2) (3) {};
				\node[A,label=below:$\rho_y$] at (3,-.2) (4) {};
				\node[node] at (5,1) (6) {};  % phantom node
				\node[node] at (1,1) (7) {};  % phantom node
				
				\path[draw,  thick, every node/.style={font=\sffamily\small}]
				
				(1) edge [ red] node {} (4)
				(4) edge [ blue] node {} (3)
				(6) edge [red,bend left] node {} (4)
				(4) edge [red,bend left] node {} (7);
			\end{tikzpicture}
			\caption{$\rho_y$ is basic}
			\label{fig:A}
		\end{subfigure}
		\begin{subfigure}[b]{0.22\textwidth}
			\centering
			%		 		\resizebox{\linewidth}{!}{\input{figures/b.tikz}}  
			\begin{tikzpicture}[scale=0.6, ->,>=stealth',auto,node distance=3cm,
				thick,main node/.style={circle,draw,font=\sffamily\small\bfseries}, node/.style={}]
				
				\node[node] at (2,2) (1) {};
				\node[node] at (4,2) (3) {};
					\node[A,label=below:$\rho_y$] at (3,-.2) (4) {};
				\node[node] at (5,1) (6) {};  % phantom node
				\node[node] at (1,1) (7) {};  % phantom node
				
				\path[draw, thick, every node/.style={font=\sffamily\small}]
				
				(1) edge [red] node {} (4)
				(4) edge [blue] node {} (3)
				(6) edge [red,bend left] node {} (4)
				(4) edge [blue,bend left] node {} (7);
			\end{tikzpicture}
			\caption{$\rho_y$ is not basic}
			\label{fig:B}
		\end{subfigure}
		\begin{subfigure}[b]{0.22\textwidth}
			\centering
			%		 		\resizebox{\linewidth}{!}{\input{figures/c.tikz}}  
			\begin{tikzpicture}[scale=0.6, ->,>=stealth',auto,node distance=3cm,
				thick,main node/.style={circle,draw,font=\sffamily\small\bfseries}, node/.style={}]
				
				\node[node] at (2,2) (1) {};
				\node[node] at (4,2) (3) {};
					\node[A,label=below:$\rho_y$] at (3,-.2) (4) {};
				\node[node] at (5,1) (6) {};  % phantom node
				\node[node] at (1,1) (7) {};  % phantom node
				
				\path[draw, thick, every node/.style={font=\sffamily\small}]
				
				(1) edge [red] node {} (4)
				(4) edge [blue] node {} (3)
				(6) edge [blue,bend left] node {} (4)
				(4) edge [red,bend left] node {} (7);
			\end{tikzpicture}
			\caption{$\rho_y$ is not basic}
			\label{fig:C}
		\end{subfigure}
		\begin{subfigure}[b]{0.22\textwidth}
			\centering
			%		 		\resizebox{\linewidth}{!}{\input{figures/c.tikz}}  
			\begin{tikzpicture}[scale=0.6, ->,>=stealth',auto,node distance=3cm,
				thick,main node/.style={circle,draw,font=\sffamily\small\bfseries}, node/.style={}]
				
				\node[node] at (2,2) (1) {};
				\node[node] at (4,2) (3) {};
					\node[A,label=below:$\rho_y$] at (3,-.2) (4) {};
				\node[node] at (5,1) (6) {};  % phantom node
				\node[node] at (1,1) (7) {};  % phantom node
				
				\path[draw, thick, every node/.style={font=\sffamily\small}]
				
				(1) edge [red] node {} (4)
				(4) edge [blue] node {} (3)
				(6) edge [blue,bend left] node {} (4)
				(4) edge [blue,bend left] node {} (7);
			\end{tikzpicture}
			\caption{$\rho_y$ is basic}
			\label{fig:D}
		\end{subfigure}
		\caption{The four possible colorings of arcs of $C_1^\prime$ and  $C_1^{\prime\prime}$.}
		\label{fig:E}
	\end{figure}
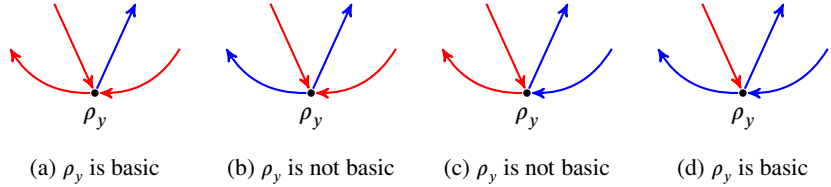	 

Case $2B$: $\rho_y \not\in C_1$.
In this case,
 $C_1^\prime$ is a concordant cycle,  hence $g^\prime = \frac12 C_1^\prime + \frac12 C_2$ is a circulation equivalent  to $g$. Since no weight of a cycle in $supp(g)$ changes,  $supp(g^\prime)$ has a discordant cycle. Observe that $g^\prime$ contains fewer arcs than $g$.  The algorithm loops back to the beginning of the $2$-cycle Loop. Thus, we obtain a circulation $g^\prime$ equivalent to $g$ and its  integral decomposition in a finite number of steps.
\end{proof}

% Directed graph figure
\newcommand{\commentTikz}[1]{} % used to annotate tikz
\newcommand{\vertexScale}{0.6} % used to scale all vertices
\newcommand{\edgeLabelScale}{1.0} % used to scale all arc labels
\definecolor{edgeBlue}{RGB}{0,0,255}
\definecolor{edgeRed}{RGB}{255,0,0}

%%%%%%%%%%% CANONICAL REPRESENTATION PARAMETERS BEGIN %%%%%%%%%%%
% \begin{landscape}   % use this for landscape mode
	% (and matching \end{landscape} below)

\def\XAxisBump{.7} % vertical bump above x-axis
\def\MaxVerticalLine{8}  % largest hash mark on x-axis
\def\MinVerticalLine{0}  % smallest hash mark on x-axis
\def\IntervalThickness{1.4}
\def\Gap{1.8} % horizontal gap between consecutive vertical integer lines
\def\VGap{.5}   % vertical gap between consecutive interval classes
\def\Epsilon{.7}  % how far intervals and axis pass hash marks
\def\Height{6} % maximum number of horizontal levels defining vertical bars of axis
%%%%%%%%%%%%%%%%%%%%%%%%%%%%%%%%%%%%%%%%%

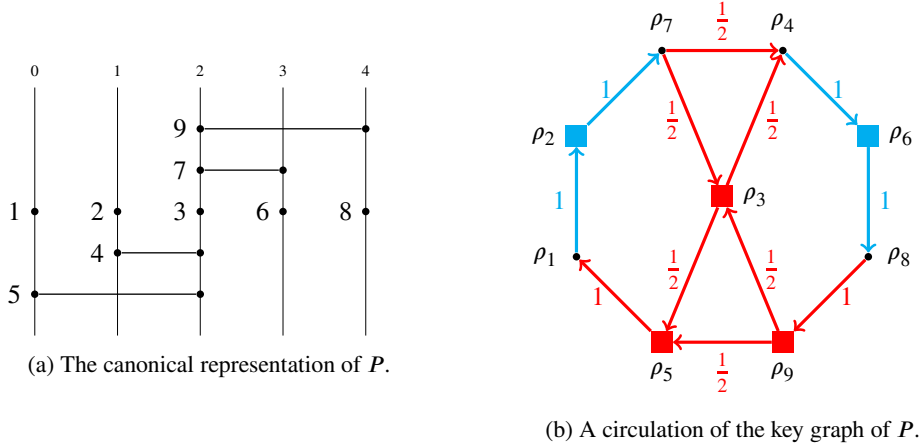
\begin{figure}[H]
\begin{center}
\begin{subfigure}[c]{0.45\textwidth}
\begin{tikzpicture}[scale=1.1]
	\centering
\foreach \i in {0,...,4}{
    \draw[line width=.02em](3+\i,3)--(3+\i,6);
 \node[]()at(3+\i,6.2){\tiny{\i}};   }
\node[A,label=left:9](L) at (5,5.5) {};
\node[A](R) at (7,5.5) {};
\draw[line width=.05em](L)--(R); %16%9

\node[A,label=left:1](L) at (3,4.5) {};%3%1

\node[A,label=left:7](L) at (5,5) {};
\node[A](R) at (6,5) {};
\draw[line width=.05em](L)--(R); %11%7

\node[A,label=left:5](L) at (3,3.5) {};
\node[A](R) at (5,3.5) {};
\draw[line width=.05em](L)--(R); %9%5

\node[A,label=left:4](L) at (4,4) {};
\node[A](R) at (5,4) {};
\draw[line width=.05em](L)--(R);%8%4 

\node[A,label=left:2]()at (4,4.5) {};%6%2
\node[A,label=left:3]()at (5,4.5) {};%7%3
\node[A,label=left:6]()at (6,4.5) {};%10%6
\node[A,label=left:8]()at (7,4.5) {};%15%8
\end{tikzpicture}
	\caption{The canonical representation of $P$. \label{figExampleA}}

\end{subfigure}
\hfill
%\hskip2cm
\begin{subfigure}[c]{0.45\textwidth}
\begin{tikzpicture}[scale=1.4]
	\centering
    \def\radius{1.5}
    % Draw the octagon and its vertices, starting from 22.5 degrees
    \foreach \i in {1,2,4,7} {
        \node[A] (v\i) at (22.5+\i*360/8:\radius){};
            }
\node[Yr,label=right:$\rho_3$] (v0) at (0,0){};
\node[label=above:$\rho_4$]  () at (v1){};
\node[label=above:$\rho_7$] ()  at (v2) {};
\node[Yb,label=left:$\rho_2$]  (v3) at (22.5+3*360/8:\radius) {};
\node[label=left:$\rho_1$]  () at (v4){};
\node[label=right:$\rho_8$]  () at (v7){};
\node[Yb,label=right:$\rho_6$]  (v8) at ((22.5+8*360/8:\radius){};
\node[Yr,label=below:$\rho_5$]  (v5) at (22.5+5*360/8:\radius){};
\node[Yr,label=below:$\rho_9$]  (v6) at (22.5+6*360/8:\radius){};

% \node() at (-0.95,0){$C_2^\prime$};\node() at (.9,0){$C_2^{\prime\prime}$};

        \draw[red,line width=.12em,<-] (v1)--node [above] {$\frac12$} (v2); 
         \draw[cyan,line width=.12em,<-]  (v2)--node [left] {$1$} (v3); 
         \draw[cyan,line width=.12em,<-]  (v3)--node [left] {$1$} (v4);  
         \draw[red,line width=.12em,<-] (v4)--node [left] {$1$} (v5);
           \draw[red,line width=.12em,<-]  (v5)--node [below] {$\frac12$} (v6);  
           \draw[red,line width=.12em,<-] (v6)--node [right] {$1$} (v7);
             \draw[cyan,line width=.12em,<-]  (v7)--node [right] {$1$}(v8);  
             \draw[cyan,line width=.12em,<-]  (v8)--node [right] {$1$}(v1);
            \draw[red,line width=.12em,->] (v0)--node [right] {$\frac12$}  (v1); 
             \draw[red,line width=.12em,<-] (v0)--node [left] {$\frac12$}(v2);
             \draw[red,line width=.12em,->] (v0)--node [left] {$\frac12$}(v5);  
             \draw[red,line width=.12em,<-] (v0)--node [right] {$\frac12$} (v6);   
 %    \draw[red,dashed,line width=.12em,->] (v2)-- (v5);            
 \end{tikzpicture} 
	\caption{A circulation of the \FaceGraph of $P$. \label{figExampleB}}
\end{subfigure}  
\end{center}
\caption{A circulation whose weight is a redundant inequality that requires the $2$-cycle Loop to resolve. Removing
any concordant cycle destroys all discordant ones, so there is no heterogeneous decomposition.}
\label{figEx}
\end{figure}
%%%%%%%%%%%%%%%%%%%%%%%%%%%%%%%%%%
 
Proposition \ref{Case_d}
closes the proof of Theorem \ref{integerNonNegativeLinearCombinationForInequalities}.  
Figure \ref{figEx} is an example showing a case where the $2$-cycle Loop obtains the required integral linear combination of a redundant cycle inequality. 
Consider the $9$ elements  interval order $P=(0, 1, 2, 1, 0, 3, 2, 4, 2)$.
The canonical representation of $P$ is shown on the left of Figure \ref{figEx}.

The redundant cycle inequality 
\begin{eqnarray} \label{ExampleInequality}
\left\langle 4 + \rho_{2} + \rho_{6}   \leq \rho_{3} + \rho_{5} + \rho_{9} \right\rangle
\end{eqnarray}
is the weight of the circulation $g$ defined on the right of Figure \ref{figEx}. Arcs of the two red triangles have flow $\frac12$, all other arcs have flow $1$. 
Note that $g$ is the $\frac{1}{2}$-weighted sum of concordant cycles: 
$C_1=(\rho_1,\rho_2,\rho_7,\rho_3,\rho_4,\rho_6,\rho_8,\rho_9,\rho_5)$  and $C_2=(\rho_1, \rho_2,\rho_7,\rho_4,\rho_6,\rho_8,\rho_9,\rho_3,\rho_5)$.

Ultimately, to show that cycle inequality (\ref{ExampleInequality}) is a non-negative integral linear
combination of smaller cycle inequalities, one may apply 
Lemma \ref{diamond} (III) with $\rho_w=\rho_3$ to deduce that arc $\rho_7\red \rho_{5}$
appears in the \FaceGraph of $P$.  Hence
cycle inequality (\ref{ExampleInequality}) is the sum of the cycle inequalities
arising from the cycle $C_2^{\prime}=(\rho_1.\rho_2,\rho_7,\rho_5)$ (with weight $2+\rho_{2} \leq \rho_{5}$) not in $supp(g)$ and from cycle $C_2^{\prime\prime}=(\rho_4,\rho_6,\rho_8,\rho_9,\rho_3)$
(with weight $2+\rho_{6} \leq\rho_3+ \rho_{9}$).

 This latter decomposition, $g=C_2^\prime+C_2^{\prime\prime}$, is given by Case $1$ of the $2$-cycle Loop  with $\rho_w=\rho_3,$ $ \rho_x= \rho_4$, $\rho_y=\rho_5,$ and $ \rho_v=\rho_7$.
%%%%%%%%%%%%%%%%%%%%%%%%%%%%%%%%%%%%%%%%%%%%%%%%

\definecolor{edgeBlue}{RGB}{0,0,255}
\definecolor{edgeRed}{RGB}{255,0,0}

%%%%%%%%%%% CANONICAL REPRESENTATION PARAMETERS BEGIN %%%%%%%%%%%
% \begin{landscape}   % use this for landscape mode
	% (and matching \end{landscape} below)

\def\XAxisBump{.7} % vertical bump above x-axis
\def\MaxVerticalLine{8}  % largest hash mark on x-axis
\def\MinVerticalLine{0}  % smallest hash mark on x-axis
\def\IntervalThickness{1.4}
\def\Gap{1.8} % horizontal gap between consecutive vertical integer lines
\def\VGap{.5}   % vertical gap between consecutive interval classes
\def\Epsilon{.7}  % how far intervals and axis pass hash marks
\def\Height{6} % maximum number of horizontal levels defining vertical bars of axis

%%%%%%%%%%% CANONICAL REPRESENTATION PARAMETERS END %%%%%%%%%%%
%\newpage
%%%%%%%%%%%%%%%%%%%%%%%%%%%%%%%%%%%%%%%%%%%%%%%%
%%%%%%%%%%%%%%%%%%%%%%%%%%%%%%%%%%%%%%%%%
\subsection{A weak order on cycle inequalities}
\label{weak}
In this subsection we introduce the following weak order on the cycle inequalities:
\begin{eqnarray} \label{WeakOrderOnInequalities}
	\left\langle \gamma_1  +\sum\limits_{i\in A_1}\rho_i  \leq  \sum\limits_{j\in B_1}\rho_j\right\rangle< \left\langle\gamma_2  +\sum\limits_{i\in A_2}\rho_i  \leq  \sum\limits_{j\in B_2}\rho_j\right\rangle
\end{eqnarray}
\begin{center}
if and only if
\end{center}
$$
(\gamma_1 < \gamma_2) \mbox{ or } (\gamma_1 = \gamma_2 \mbox{ and } |A_2| < |A_1|) \mbox{ or } (\gamma_1 = \gamma_2 \mbox{ and } |A_2| = |A_1| \mbox{ and } |B_1| < |B_2|).
$$

The weak order (\ref{WeakOrderOnInequalities}) is simply called {\em the weak order} on cycle inequalities.
We say an inequality is {\em smaller} or {\em larger} than another inequality in accordance with (\ref{WeakOrderOnInequalities}).  On the list of all cycle inequalities the smallest ones are the one-variable inequalities of the form $\left\langle \gamma \leq \rho_x \right\rangle$.
Recall that a cycle inequality is {\em redundant} if it is a non-negative linear combination of smaller cycle inequalities. The importance of this  ordering is highlighted in the Theorem \ref{WeakOrderSignificance} below.

Loop inequalities (inequalities of the form $\langle 0\leq \rho_x\rangle$) initialize the Schrijver list since they
are clearly irredundant cycle inequalities. We proved in Theorem \ref{integerNonNegativeLinearCombinationForInequalities} that redundant cycle inequalities are non-negative integral linear combinations of discordant cycle inequalities. In the next theorem, we show that these discordant components are smaller than the original redundant inequality in terms of the weak order.

\begin{theorem} \label{WeakOrderSignificance}
	A cycle inequality is redundant if and only if it is a non-negative integral linear combination of smaller inequalities.
\end{theorem}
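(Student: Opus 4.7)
My plan is to reduce Theorem \ref{WeakOrderSignificance} to Theorem \ref{integerNonNegativeLinearCombinationForInequalities} by a coefficient-matching argument that forces every summand in the integral decomposition supplied by that theorem to be strictly smaller than $W$ in the weak order.

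The easy direction I would dispatch first: if $W=\sum_{i=1}^t\lambda_i W_i$ with $\lambda_i\in\mathbb{Z}_{>0}$ and each $W_i$ strictly smaller than $W$, then in particular each $W_i\neq W$, so $W$ is a non-negative combination of other cycle inequalities and hence is redundant.

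For the converse I would invoke Theorem \ref{integerNonNegativeLinearCombinationForInequalities} to write $W=\sum_{i=1}^t\lambda_i W_i$ with $\lambda_i\in\mathbb{Z}_{>0}$ and each $W_i$ a cycle inequality distinct from $W$ (forcing $t\geq 2$), set $W=\langle\gamma+\sum_{x\in A}\rho_x\leq\sum_{y\in B}\rho_y\rangle$ and $W_i=\langle\gamma_i+\sum_{x\in A_i}\rho_x\leq\sum_{y\in B_i}\rho_y\rangle$, and equate constants and variable coefficients to obtain $\gamma=\sum_i\lambda_i\gamma_i$ together with $[x\in A]-[x\in B]=\sum_i\lambda_i([x\in A_i]-[x\in B_i])$ for every $x$. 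Fix $i$ and split on $\gamma_i$ versus $\gamma$. The case $\gamma_i<\gamma$ is immediate from the first clause of the weak order. When $\gamma_i=\gamma>0$, the $\gamma$-identity forces $\lambda_i=1$ and $\gamma_j=0$ for all $j\neq i$, so each $W_j$ with $j\neq i$ is an all-red cycle inequality contributing only $-[x\in V(C_j)]$ to the coefficient of $\rho_x$. Reading the variable identity at $x\in A$ (using that $[x\in A_i]-[x\in B_i]\leq 1$ while the all-red contributions are nonpositive) traps $x\in A_i$, $x\notin B_i$, and $x\notin V(C_j)$ for every $j\neq i$; thus $A\subseteq A_i$. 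If $|A_i|>|A|$, the second clause of the weak order finishes this case. If instead $A_i=A$, then evaluating the identity at each $x\notin A\cup B$ forces $x\notin B_i$ and $x$ out of every $V(C_j)$, while evaluating at each $x\in B$ shows that $B_i$ and the disjoint union $\bigsqcup_{j\neq i}V(C_j)$ partition $B$; since $t\geq 2$ at least one $V(C_j)$ is nonempty, so $B_i\subsetneq B$ and the third clause of the weak order yields $W_i<W$. The remaining summands $W_j$ with $j\neq i$ have $\gamma_j=0<\gamma$ and are smaller by the first clause. The degenerate case $\gamma=0$ is parallel but shorter: every $\gamma_j$ vanishes, the coefficient identity forces every $\lambda_j=1$ and partitions $B$ among the $V(C_j)$'s, so $|V(C_j)|<|B|$ for each $j$.

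The step I expect to be the main obstacle is the borderline subcase $\gamma_i=\gamma$ with $|A_i|=|A|$, where a priori $W_i$ might swell its $B$-side to absorb the vertices deleted from the other $W_j$. The key point I will lean on is that forcing $A_i=A$ squeezes the residual all-red cycles $C_j$ entirely inside $B$, so they necessarily carve off a proper subset and compel $|B_i|<|B|$. This uses both the disjointness of $A_i$ and $B_i$ within a single cycle inequality and the structural fact that $\gamma_j=0$ confines the contribution of $W_j$ to the $B$-side alone.
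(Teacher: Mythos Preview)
Your proof is correct and follows the same approach as the paper: invoke Theorem \ref{integerNonNegativeLinearCombinationForInequalities} and then use the identity $\gamma=\sum_i\lambda_i\gamma_i$ together with coefficient matching on the $\rho$-variables to force each summand strictly below $W$ in the weak order. Your argument is in fact more thorough than the paper's terse version, explicitly handling the borderline sub-case $A_i=A$ and the degenerate case $\gamma=0$ (which the paper skips by asserting $\gamma>0$ for redundant inequalities).
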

\begin{proof}  If a cycle inequality is a non-negative linear combination of smaller inequalities, then it is redundant, by definition.
	Conversely, suppose that the cycle inequality
\begin{equation} \label{redundantCycleInequality}
		\left\langle\gamma  +\sum_{i\in A}\rho_i  \leq  \sum_{j\in B}\rho_j \right\rangle
\end{equation}
 is redundant.  This means it is not a loop inequality, so $\gamma>0$.  By Theorem \ref{integerNonNegativeLinearCombinationForInequalities},
the sum (\ref{redundantCycleInequality}) equals:
\begin{equation} \label{sumOfCycleInequalities}
\alpha_1 \left\langle \gamma_1  +\sum_{i\in A_1}\rho_i  \leq  \sum_{j\in B_1}\rho_j \right\rangle + \cdots + \alpha_t \left\langle \gamma_t  +\sum_{i\in A_t}\rho_i  \leq  \sum_{j\in B_t}\rho_j \right\rangle,
\end{equation}
where  $t\geq 2$ and $\alpha_1,\ldots,\alpha_t$ are positive integers. If $\gamma_i<\gamma$, then the corresponding $i$th inequality is smaller than $(\ref{redundantCycleInequality})$.  
If $\gamma_i=\gamma$, then for every $j\neq i$, $\alpha_j=0$ and $|A_j|=0$ thus  $|B_j|=1$. 
Then $|B_i|<|B|$, hence  the  $i$th inequality and all the other  inequalities in (\ref{sumOfCycleInequalities}) are smaller than (\ref{redundantCycleInequality}), as desired.
\end{proof}

Thanks to Theorem \ref{integerNonNegativeLinearCombinationForInequalities}, the weak order on cycle inequalities, and Theorem \ref{WeakOrderSignificance}, an algorithm is provided for identifying the irredundant inequalities from the full list of cycle inequalities.
 By Theorem \ref{UniqueMinimalTDI}, the remaining cycle inequalities form the unique minimal TDI subsystem, and thus the Schrijver system of the length polyhedron.

\section{Conclusions}
%{An exponential-sized Schrijver system}

In practice, to compute the Schrijver system given by Theorem \ref{UniqueMinimalTDI} we have
used Johnson's algorithm \cite{Johnson} to enumerate all directed cycles in the \FaceGraph.  These cycles are converted to their corresponding inequalities (represented as vectors) and placed into a list realizing a linear extension of the weak order on cycle inequalities.
Detecting a redundant cycle inequality amounts to testing whether an inequality (vector) is a non-negative linear
combination of smaller irredundant inequalities (vectors); this is essentially a feasibility test for a linear program, which is 
reasonably efficient.  Unfortunately, the number of irredundant inequalities may become
exponential in the size of the interval order as the next example demonstrates.  

%%%%%%%%%%%%%%%%%%%%%%%%%%%%%%%%%%%%%

\def\N2{7}  % number of intervals on each side
\def\N2Minus2{5}  % number of intervals on each side minus 2
\def\n2{6}   % one less than number of intervals
\def\g2{5}   % gap size
\def\h2{11} % height of axis
\def\e2{1.5}   % epsilon overhang

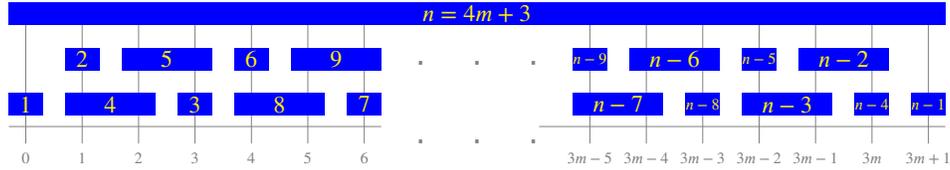
\begin{figure}[H]
	\begin{center}
		\begin{tikzpicture}[scale=0.15]
			% draw the left axis and label it
			\draw [gray, very thin] (-\e2,0)-- (\g2*\n2+\e2,0);
			
			\foreach \x in {0,...,\n2} {
				\draw [gray, very thin] (\g2*\x,\h2) -- (\g2*\x,-\e2) node[below] {{\tiny \x}};
			}
			
			\draw (\g2*\n2+\g2,-\e2) node{{\small \color{gray}$  \vysmblksquare$}};
			\draw (\g2*\n2+2*\g2,-\e2) node{{\small \color{gray}$  \vysmblksquare$}};
			\draw (\g2*\n2+3*\g2,-\e2) node{{\small \color{gray}$  \vysmblksquare$}};

			\draw (\g2*\n2+\g2,\h2/2) node{{\small \color{gray}$  \vysmblksquare$}};
			\draw (\g2*\n2+2*\g2,\h2/2) node{{\small \color{gray}$  \vysmblksquare$}};
			\draw (\g2*\n2+3*\g2,\h2/2) node{{\small \color{gray}$  \vysmblksquare$}};

			% draw the right axis and label it
			\draw [gray, very thin] (\g2*\n2+4*\g2-3*\e2,0)-- (\g2*\n2+4*\g2+\g2*\n2+\e2,0);
			
			\foreach \x in {3,...,7} {
				%\draw [gray, very thin] (\g2*\n2+5*\g2+\g2*\n2-\g2*\x,\h2) -- (\g2*\n2+5*\g2+\g2*\n2-\g2*\x,-\e2) node[below] {{\tiny $3m- \x$}};
\draw [gray, very thin] (\g2*\n2+5*\g2+\g2*\n2-\g2*\x,\h2) -- (\g2*\n2+5*\g2+\g2*\n2-\g2*\x,-\e2) node[below] {{\tiny $3m- \number\numexpr\x-2\relax$}};
			}
			
			\foreach \x in {2,...,2} {
	%\draw [gray, very thin] (\g2*\n2+5*\g2+\g2*\n2-\g2*\x,\h2) -- (\g2*\n2+5*\g2+\g2*\n2-\g2*\x,-\e2) node[below] {{\tiny $3m- \x$}};
	\draw [gray, very thin] (\g2*\n2+5*\g2+\g2*\n2-\g2*\x,\h2) -- (\g2*\n2+5*\g2+\g2*\n2-\g2*\x,-\e2) node[below] {{\tiny $3m$}};
}
			
						\foreach \x in {1,...,1} {
				%\draw [gray, very thin] (\g2*\n2+5*\g2+\g2*\n2-\g2*\x,\h2) -- (\g2*\n2+5*\g2+\g2*\n2-\g2*\x,-\e2) node[below] {{\tiny $3m- \x$}};
				\draw [gray, very thin] (\g2*\n2+5*\g2+\g2*\n2-\g2*\x,\h2) -- (\g2*\n2+5*\g2+\g2*\n2-\g2*\x,-\e2) node[below] {{\tiny $3m+1$}};
			}
			
			\fill[blue] (-\e2,\h2-2) rectangle (\g2*\n2+5*\g2+\g2*\n2-\g2+\e2,\h2);
			\draw (\g2*\n2+2*\g2,\h2-1) node{{\small \color{yellow}$n = 4m+3$}};

			\fill[blue] (0-\e2,1) rectangle (0+\e2,3);
			\draw (0,2) node{{\small \color{yellow}$1$}};	
			
			\fill[blue] (\g2-\e2,5) rectangle (\g2+\e2,7);
			\draw (\g2,6) node{{\small \color{yellow}$2$}};	
			
			\fill[blue] (3*\g2-\e2,1) rectangle (3*\g2+\e2,3);
			\draw (3*\g2,2) node{{\small \color{yellow}$3$}};	
			
			\fill[blue] (\g2-\e2,1) rectangle (2*\g2+\e2,3);
			\draw (1.5*\g2,2) node{{\small \color{yellow}$4$}};	
			
			\fill[blue] (2*\g2-\e2,5) rectangle (3*\g2+\e2,7);
			\draw (2.5*\g2,6) node{{\small \color{yellow}$5$}};	
			
			\fill[blue] (\g2+3*\g2-\e2,5) rectangle (\g2+3*\g2+\e2,7);
			\draw (\g2+3*\g2,6) node{{\small \color{yellow}$6$}};	
			
			\fill[blue] (3*\g2+3*\g2-\e2,1) rectangle (3*\g2+3*\g2+\e2,3);
			\draw (3*\g2+3*\g2,2) node{{\small \color{yellow}$7$}};	
			
			\fill[blue] (\g2+3*\g2-\e2,1) rectangle (2*\g2+3*\g2+\e2,3);
			\draw (1.5*\g2+3*\g2,2) node{{\small \color{yellow}$8$}};	
			
			\fill[blue] (2*\g2+3*\g2-\e2,5) rectangle (3*\g2+3*\g2+\e2,7);
			\draw (2.5*\g2+3*\g2,6) node{{\small \color{yellow}$9$}};

			\fill[blue] (\g2*\n2+5*\g2+\g2*\n2-\g2-\e2,1) rectangle (\g2*\n2+5*\g2+\g2*\n2-\g2+\e2,3);
			\draw (\g2*\n2+5*\g2+\g2*\n2-\g2,2) node{{\tiny \color{yellow}$n-1$}};	
			
			%%%%%%%%%%%%%%%%%%
			
			\fill[blue] (\g2*\n2+3*\g2+\g2*\n2-\g2-\e2,5) rectangle (\g2*\n2+4*\g2+\g2*\n2-\g2+\e2,7);
			\draw (\g2*\n2+3.5*\g2+\g2*\n2-\g2,6) node{{\small \color{yellow}$n-2$}};	
			
			\fill[blue] (\g2*\n2+2*\g2+\g2*\n2-\g2-\e2,1) rectangle (\g2*\n2+3*\g2+\g2*\n2-\g2+\e2,3);
			\draw (\g2*\n2+2.5*\g2+\g2*\n2-\g2,2) node{{\small \color{yellow}$n-3$}};	
			
			\fill[blue] (\g2*\n2+4*\g2+\g2*\n2-\g2-\e2,1) rectangle (\g2*\n2+4*\g2+\g2*\n2-\g2+\e2,3);
			\draw (\g2*\n2+4*\g2+\g2*\n2-\g2,2) node{{\tiny \color{yellow}$n-4$}};	
			
			\fill[blue] (\g2*\n2+2*\g2+\g2*\n2-\g2-\e2,5) rectangle (\g2*\n2+2*\g2+\g2*\n2-\g2+\e2,7);
			\draw (\g2*\n2+2*\g2+\g2*\n2-\g2,6) node{{\tiny \color{yellow}$n-5$}};	
			
			%%%%%%%%%%%%%%%%%%
			
			\fill[blue] (\g2*\n2+\g2*\n2-\g2-\e2,5) rectangle (\g2*\n2+\g2+\g2*\n2-\g2+\e2,7);
			\draw (\g2*\n2+0.5*\g2+\g2*\n2-\g2,6) node{{\small \color{yellow}$n-6$}};	
			
			\fill[blue] (\g2*\n2-1*\g2+\g2*\n2-\g2-\e2,1) rectangle (\g2*\n2+\g2*\n2-\g2+\e2,3);
			\draw (\g2*\n2-0.5*\g2+\g2*\n2-\g2,2) node{{\small \color{yellow}$n-7$}};	
			
			\fill[blue] (\g2*\n2+1*\g2+\g2*\n2-\g2-\e2,1) rectangle (\g2*\n2+1*\g2+\g2*\n2-\g2+\e2,3);
			\draw (\g2*\n2+1*\g2+\g2*\n2-\g2,2) node{{\tiny \color{yellow}$n-8$}};	
			
			\fill[blue] (\g2*\n2-1*\g2+\g2*\n2-\g2-\e2,5) rectangle (\g2*\n2-1*\g2+\g2*\n2-\g2+\e2,7);
			\draw (\g2*\n2-1*\g2+\g2*\n2-\g2,6) node{{\tiny \color{yellow}$n-9$}};					
			
			%		\draw (1,2) node{{\color{yellow}$1$}};
		\end{tikzpicture}
		\caption{\label{ExponentialSizeTDI} The canonical representation of (twin-free) interval orders whose length polyhedra  
			have unique minimal TDI-systems that grow exponentially.}
	\end{center}
\end{figure}

%%%%%%%%%%%%%%%%%%%%%%%%%%%%%%%%%%%%%%%%%%%%%%%%

Consider the interval orders whose
canonical representations are depicted in Figure \ref{ExponentialSizeTDI}.
Let $P_m$ denote the interval order in this figure, where $m$ is a positive integer.
The canonical representation of $P_m$ contains $2m+2$ intervals of length zero and
$2m$ intervals of length one.  These intervals correspond to $4m+2$ irredundant cycle inequalities of the length polyhedron.
There are also $3^m$ irredundant cycle inequalities derived from cycles of the \FaceGraph that take
the form $(1,a_1,a_2,\ldots,a_{2m},a_{2m+1},n-1,n)$, where $(a_{2i},a_{2i+1})$ is one of three
pairs from $\{(4i-2,4i-1),(4i-2,4i+1),(4i,4i-1)\}$, for $1 \leq i \leq m$.
After some work checking the $5^m + 4m + 2$ directed cycles in the \FaceGraph (which we omit),
one can verify that these are all of the irredundant cycle inequalities; so,
the number of inequalities in the Schrijver system given by Theorem \ref{UniqueMinimalTDI} for $P_m$
has exactly $3^m + 4m + 2$ inequalities.

It would be interesting to characterize interval orders whose \FaceGraph have bounded circumference (say circumference
less than or equal to $k$ for some positive integer $k$)
as this would guarantee a polynomial-sized Schrijver system determining the length polyhedron.
In a related computational-complexity question, one wonders whether testing the irreducibility of 
a cycle inequality can be done in time polynomial in the size of the interval order, regardless of the total
number of irreducible cycle inequalities ultimately.  

\subsection*{Acknowledgments}

We thank Csaba Bir\'o for stimulating discussions.

\bibliographystyle{abbrv}
\bibliography{TheReferences}

\end{document}